\documentclass[10pt]{article}
\usepackage[margin=2.5cm]{geometry}
\usepackage{float}
\usepackage{adjustbox}
\usepackage{booktabs}
\usepackage{xcolor}
\usepackage{amsmath}
\usepackage{amssymb}
\usepackage{caption}
\usepackage{amsthm}
\usepackage{subcaption}
\usepackage{mathtools}
\usepackage{comment}
\usepackage{soul}
\usepackage{enumerate}
\usepackage{hyperref}

\theoremstyle{definition}
\newtheorem{assumption}{Assumption}[section]
\newtheorem{definition}{Definition}[section]
\newtheorem{remark}{Remark}[section]
\newtheorem{theorem}{Theorem}[section]
\newtheorem{example}{Example}[section]
\newtheorem{corollary}{Corollary}[section]
\newtheorem{lemma}{Lemma}[section]
\newtheorem{proposition}{Proposition}[section]

\usepackage[
  backend=biber,
  style=numeric,
  natbib=true,
  url=false,
  isbn=false,
  doi=true,
  eprint=false,
  sortcites=true,
  giveninits=true,
]{biblatex}
\AtBeginBibliography{\setlength{\emergencystretch}{1em}}
\newcommand{\E}{\mathbb{E}}
\newcommand{\R}{\mathbb{R}}
\newcommand{\Rp}{\mathbb{R}_+}
\newcommand{\N}{\mathbb{N}}
\newcommand{\Prob}{\mathbb{P}}
\newcommand{\Ecal}{\mathcal{E}}
\newcommand{\Pcal}{\mathcal{P}}

\newcommand{\Fcal}{\mathcal{F}}
\newcommand{\Bcal}{\mathcal{B}}
\DeclareMathOperator*{\esssup}{ess\,sup}

\usepackage{authblk}

\author[1,2]{Sven Karbach}
\author[3,4,5]{Thomas K. Kloster\thanks{\texttt{tkk@econ.au.dk}}}

\affil[1]{Korteweg--de Vries Institute for Mathematics, University of Amsterdam}
\affil[2]{Institute for Informatics, University of Amsterdam}
\affil[3]{Department of Economics and Business Economics, Aarhus University}
\affil[4]{Department of Data Science and Analytics, BI Norwegian Business School}
\affil[5]{CoRE, Center for Research in Energy: Economics and Markets}

\title{Affine pure-jump Volterra fields}
\begin{document}
\maketitle

\begin{abstract}
We study a class of non-negative spatio-temporal random fields that exhibit self-exciting
clustering, which we refer to as affine pure-jump Volterra fields. They are defined via
stochastic integration of a Volterra kernel against a thinned Poisson random measure and the
thinning is such that the field admits a representation as a stochastic integral of the same kernel, but against a random measure whose compensator has a density that is pointwise affine in the field itself. This representation leads to affine transform formulas characterizing the Laplace transform of functionals of the field up to the solution of a deterministic non-linear
Volterra integral equation. Affine pure-jump Volterra random fields extend a non-negative finite-first-moment subclass of pure-jump
affine Volterra processes to the random-field setting. The framework includes non-negative finite-variation
ambit fields, marked Hawkes systems, and branching-type models as special cases, and we make these
connections explicit through examples.

\medskip
\noindent\textbf{Keywords:} affine processes; stochastic Volterra equations; Hawkes processes; Poisson random measures; Riccati--Volterra equations; Laplace functionals.

\noindent\textbf{MSC2020:} Primary 60G60, 60G55; Secondary 60G57, 60H20, 91G20.
\end{abstract}

\section{Introduction}
We study a class of positive spatio-temporal random fields $V = (V_t(x))_{t\in[0,T],\,x\in E}$ arising from stochastic space-time Volterra equations of the form
\begin{equation}\label{eq:SVE_intro}
V_t(x) = \phi(t,x) + \int_{(0,t)}\int_E\int_0^\infty\int_0^\infty K(t-s,x,y)\, z\, \mathbf{1}_{\{u \le \lambda_0 + \lambda_1 V_s(y)}\}\, N(ds,dy,dz,du),
\end{equation}
where $N$ is a marked Poisson random measure on $[0,T] \times E \times \Rp \times \Rp$ with intensity $ds \otimes m(dy) \otimes \ell(dz) \otimes du$. Here, $E$ is a Polish space carrying a finite measure~$m$, $\ell$ is a L\'evy measure on $\Rp$ with finite first moment, $K$ is a non-negative (possibly Volterra-singular) deterministic kernel, $\phi$ is a non-negative driving field, and $\lambda_0,\lambda_1\ge 0$ are non-negative coefficients. We refer to random fields of the form \eqref{eq:SVE_intro} as \emph{affine pure-jump Volterra fields}, where the affine structure arises from the thinning by $\mathbf{1}_{\{u \le \lambda_0 + \lambda_1 V_s(y)\}}$, which produces a marked jump measure whose predictable compensator is pointwise affine in $V$. More precisely, it is shown in Section~\ref{sec:affine_structure} that $V_t(x)$ can be written on the form
\begin{equation}\label{eq:intro_affine_representation}
V_t(x)=\phi(t,x)+{\int_{(0,t)}}\int_E\int_{0}^\infty K(t-s,x,y)\,z\,\mu (ds,dy,dz),
\end{equation}
where the predictable compensator of the marked random measure $\mu$ admits a density with respect to $dt\otimes m \otimes \ell$ of the form
\begin{equation}\label{eq:hawkes_intensity_intro}
    \Lambda_t(x) = \lambda_0+\lambda_1{V_t(x)}.
\end{equation}
When $\lambda_1=0$, the field in \eqref{eq:SVE_intro} reduces to a L\'evy-driven ambit field in the sense of \cite{Ambit}; see \cite{BenthVeraart2026} for a recent overview. In this case, one-point laws are infinitely divisible and their Fourier--Laplace transforms have L\'evy--Khintchine form, cf. \cite{Rosinski1989}, while the temporal dynamics need not be semimartingales or Markov processes. When $\lambda_1>0$, the compensator \eqref{eq:hawkes_intensity_intro} produces self-exciting jump clustering and our main result derives conditional Laplace functionals from a deterministic Riccati--Volterra equation. Setting $E=\lbrace e\rbrace$ recovers the corresponding non-negative, finite-first-moment subclass of pure-jump affine Volterra processes studied in \cite{AbiJaberLarssonPulido2019,BondiLivieriPulido2024}. Under additional regularity, the map $t\mapsto V_t(\cdot)$ can also be viewed as a function-valued process and compared with Hilbert space-valued L\'evy-type models \cite{BenthSgarra2024,BenthKruhner2023,BenthEyjolfsson2017} and measure-valued branching processes \cite{DawsonLi2012,Li2022}. In the overlapping non-negative finite-variation cases the transform formulas have the corresponding affine form, whereas the random-field formulation leads to a pointwise Volterra equation rather than an equation posed in a function space.

Random-field models arise, for example, in term-structure modelling, astrophysics, and medical imaging; see \cite{Goldstein2000,Kimmel2004,Kloster,,MedicalImaging,MedicalImaging2}. Our focus is on non-negative spatio-temporal quantities with clustered activity. Financial returns exhibit pronounced volatility clustering \cite{Cont2001}, while earthquake occurrences provide a canonical setting for spatio-temporal self-excitation \cite{Earthquakes}. Throughout, we restrict to scalar-valued fields $V_t(x)$, but note that affine processes on more general state spaces include Hilbert space-valued affine diffusions \cite{SchmidtTappeYu2020}, measure-valued affine diffusions \cite{CuchieroMeasureValuedAffine}, affine processes on positive semidefinite matrices \cite{CuchieroFilipovicMayerhoferTeichmann2011}, and pure-jump affine processes on positive Hilbert--Schmidt operators \cite{CoxKarbachKhedherPureJump2022}. The latter construction is purely discontinuous on the cone of positive self-adjoint Hilbert--Schmidt operators and we likewise focus on pure-jump dynamics. This permits a probabilistically strong construction, whereas preserving positivity under an affinely state-dependent space-time Gaussian driver would require a different mechanism. Our approach is related to \cite{Chong2017}, who considers general (non-affine) L\'evy-driven Volterra fields of the form
\[
Y_t(x)=\int_{0}^{t}\int_E K(t-s,x,y)\sigma(Y_s(y))\mu(dy,ds),
\]
where the non-linearity $\sigma$ is assumed pointwise Lipschitz continuous in the state $Y_t(x)$ and $\mu$ is an independently scattered and infinitely divisible random measure. The Lipschitz assumption is violated for the indicator function appearing in \eqref{eq:SVE_intro}, and we therefore must establish existence and uniqueness of a strong solution to \eqref{eq:SVE_intro} by a different contraction argument. 

\subsection{Description of the affine transform formula}
Once existence and uniqueness is established for a suitable class of kernel functions in Theorem~\ref{thm:strong_existence}, our main result is the affine transform formula for the Laplace functional of $V$. Related spatial Hawkes models admit Laplace-functional fixed-point equations; see, for example, \cite{BaarsLaevenMandjes2024}. Here we develop a predictable field construction and conditional Laplace transforms for non-negative Volterra kernels, allowing integrable temporal singularities and infinite jump activity under a finite first-moment condition. In Theorem~\ref{thm:spatial_transform}, we obtain a formula of the form
\begin{equation}\label{eq:intro_transform_formula}
\E\left[\exp\left(-\langle f,V_t\rangle\right)\right] = \exp\left(-\langle f,\phi_t\rangle - \int_0^t\int_E\left(\lambda_0+\lambda_1\phi_s(y)\right)F\bigl(\psi_f(t-s,y)\bigr)\,m(dy)\,ds\right),
\end{equation}
where $\langle f,V_t\rangle=\int_{E}f(y)V_t(y)m(dy)$, $F(q)=\int_0^\infty (1-e^{-qz})\ell (dz)$, and $\psi_f$ is the unique solution in the dominated class of Lemma~\ref{lem:riccati_volterra} of a deterministic non-linear Volterra equation with kernel $K$, non-linearity $F$, and a forcing term determined by $f$. Equation~\eqref{eq:intro_transform_formula} is the analogue of the usual affine transform formula and Theorem~\ref{thm:conditional_transform} gives the corresponding conditional transform, while Section~\ref{sec:spacetime_transform} treats functionals integrated over space and time.

The adjective \emph{affine} refers here to two related structures. First, the logarithm of \eqref{eq:intro_transform_formula} depends affinely on the driving field $\phi$, because the Riccati--Volterra solution $\psi_f$ is independent of $\phi$; see also Remark~\ref{rem:affine_structure}. Second, the compensator density \eqref{eq:hawkes_intensity_intro} is pointwise affine in the field. For Markov affine processes, conditional Fourier--Laplace transforms are exponentially affine in the state and the semimartingale characteristics have affine state dependence; see \cite{DuffiePanSingleton2000,DuffieFilipovicScachermayer2003}. In the present Volterra setting, the current value $V_t(x)$ alone need not be a sufficient state for the transform. Instead, the conditional transform is affine in the frozen forward input generated by the past, as in the affine Volterra setting of \cite{AbiJaberLarssonPulido2019}.

\subsection{Organization of the paper}
{\emergencystretch=3em
Section~\ref{sec:preliminaries} introduces the general setting and notation to be used throughout and establishes the key Lemma~\ref{lem:predictability}. Section~\ref{sec:existence} shows existence and uniqueness of the solution to equation~\eqref{eq:SVE_intro} for a large class of kernel functions $K$, which we term \emph{admissible} kernel functions. Additionally, Section~\ref{sec:affine_structure} elaborates on the affine structure of the model and relates it to existing models from the literature. Section~\ref{sec:spatial_transform} derives an affine transform formula for functionals of the form $\int_E f(y)V_t(y)m(dy)$ and Section~\ref{sec:spacetime_transform} extends this to the space-time setting of $\int_{0}^{t}\int_E f(s,y)V_s(y)m(dy)ds$. Section~\ref{sec:applications} considers several detailed examples of specifications of affine pure-jump Volterra fields and relates these to various Hawkes-type models previously considered in the literature. Finally, Section~\ref{sec:regularity} considers the spatial regularity of the field $V_t(x)$, such as conditions on $K$ that ensure Hölder continuity in mean for the map $x\mapsto V_t(x)$ and existence of a spatially Hölder continuous modification. Some lengthy proofs and supporting results are given in the Appendix.
\par} 

\section{Preliminaries}\label{sec:preliminaries}
In all of the following, we fix a complete probability space $(\Omega,\Fcal,\Prob)$ and a finite horizon $T>0$. We let $E$ be a Polish space with finite reference measure $m$ and Borel $\sigma$-algebra $\Ecal$. Our definitions of integer-valued random measures, predictable compensators, and stochastic integration is as in \cite[Chapter~II.1]{JacodShiryaev2003}. The proofs of Lemma~\ref{lem:distinct_times} and Lemma~\ref{lem:predictability} below are provided in Appendix~\ref{app:measurability}.

\begin{definition}\label{def:predictable}
For a right-continuous complete filtration $(\Fcal_t)$, let $\Pcal$ be the predictable $\sigma$-algebra on $\Omega\times[0,T]\times E$, generated by $A_0\times\{0\}\times B$ and $A\times(s,t]\times B$ with $A_0\in\Fcal_0$, $A\in\Fcal_s$, and $B\in\Ecal$. A random field is \emph{predictable} if it is $\Pcal$-measurable.
\end{definition}

Let $N$ be a Poisson random measure on $[0,T]\times E\times\Rp\times\Rp$ with deterministic intensity
\[
\Lambda(ds,dy,dz,du)=ds\,m(dy)\,\ell(dz)\,du,
\]
and let $(\Fcal_t^N)$ be its completed, right-continuous natural filtration,
\begin{equation}\label{eq:natural_filtration}
\Fcal_t^N=\bigcap_{r\in(t,T]}\Fcal_r^0\qquad(t<T),
\qquad \Fcal_T^N=\Fcal_T^0,
\end{equation}
where $\Fcal_t^0$ is generated by $N((0,s]\times B)$ for $s\leq t$ and augmented with all $\Prob$-null sets in $\Fcal$. The predictable compensator of $N$ is $\Lambda$, so every non-negative predictable integrand $H$ satisfies
\begin{equation}\label{eq:campbell}
\E\left[\int H\,dN\right]=\int\E[H] \,d\Lambda,
\end{equation}
with both sides in $[0,\infty]$. 

\begin{lemma}\label{lem:distinct_times}
  Let $N$ be a Poisson random measure on $[0,T]\times E\times\Rp\times\Rp$ with intensity $ds\otimes m(dy)\otimes\ell(dz)\otimes du$, where $m(E)<\infty$, $\ell(\lbrace0\rbrace)=0$, and $\ell((\varepsilon,\infty))<\infty$ for every $\varepsilon>0$. Then $N$ has no atom at time zero and, almost surely, its atoms have pairwise distinct time coordinates.
\end{lemma}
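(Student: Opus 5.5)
The plan is to prove both claims by computing expected counts of atoms via the intensity, after reducing to a $\sigma$-finite family of finite-intensity pieces.

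\textbf{No atom at time zero.} The intensity gives
\[
\E\bigl[N(\{0\}\times E\times\Rp\times\Rp)\bigr]=\Lambda(\{0\}\times E\times\Rp\times\Rp)=0,
\]
since Lebesgue measure charges no points, and $0\cdot\infty=0$ in the product-measure convention. So $N$ has no atom at time zero almost surely. Strictly, $\Lambda$ is infinite on $E\times\Rp\times\Rp$, so I would apply this to each piece $A_{k,n}$ defined below and take a countable union.

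\textbf{Reduction to finite-intensity pieces.} For $k,n\in\N$ set
\[
A_{k,n}=[0,T]\times E\times(1/k,\infty)\times[0,n].
\]
By assumption $\Lambda(A_{k,n})=T\,m(E)\,\ell((1/k,\infty))\,n<\infty$. Since $\ell(\{0\})=0$, we have $\bigcup_{k,n}A_{k,n}=[0,T]\times E\times(0,\infty)\times\Rp$, and this set carries all of $\Lambda$. Hence a.s.\ $N$ charges no point with $z=0$, because $\E N([0,T]\times E\times\{0\}\times\Rp)=0$. Any two atoms of $N$ therefore lie in a common $A_{k,n}$, and it suffices to show that for each fixed $(k,n)$ the restriction $N_{k,n}:=N|_{A_{k,n}}$ a.s.\ has atoms with pairwise distinct time coordinates.

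\textbf{Distinct times on a finite piece.} $N_{k,n}$ is a Poisson random measure with finite intensity $\Lambda_{k,n}=\Lambda|_{A_{k,n}}$. It therefore admits the representation: given $N_{k,n}(A_{k,n})=j$, the atoms are $j$ i.i.d.\ points with law $\Lambda_{k,n}/\Lambda(A_{k,n})$. Under this law the time marginal is uniform on $[0,T]$, which is diffuse. Hence for $i\neq i'$ the probability that the $i$-th and $i'$-th points share a time coordinate is $0$, and summing over $j$ and pairs gives probability zero.

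Alternatively, one can avoid conditioning by using the second factorial moment measure of a Poisson process, which is $\Lambda\otimes\Lambda$:
\[
\E\sum_{p\neq q}\mathbf 1\{t_p=t_q\}=(\Lambda_{k,n}\otimes\Lambda_{k,n})\bigl(\{t=t'\}\bigr)=0
\]
by Fubini, since the diagonal $\{t=t'\}$ is Lebesgue-null in $[0,T]^2$.

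\textbf{Main obstacle.} The only real subtlety is the measure-theoretic bookkeeping in the reduction step: $\Lambda$ is infinite, so the argument must go through the countable exhaustion by the finite-intensity sets $A_{k,n}$. Once that is in place, the simultaneous a.s.\ statement follows from a countable union of null sets.
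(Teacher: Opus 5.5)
Your proof is correct, but it is organized differently from the paper's. The paper partitions $(0,\infty)\times\Rp$ into countably many \emph{disjoint} cells $Q_i$ of finite $\ell\otimes du$-mass, so two atoms may fall in different cells. It therefore uses independence of the restrictions $N_i$. For each pair $(i,j)$ it splits $(0,T]$ into $K$ intervals and bounds the coincidence probability by $\Lambda_i\Lambda_j/K$ (or $\Lambda_i^2/K$), using $\Prob(\mathrm{Poi}(\lambda)\geq1)\leq\lambda$ and $\Prob(\mathrm{Poi}(\lambda)\geq2)\leq\lambda^2$, and then lets $K\to\infty$.

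Your \emph{increasing} exhaustion by the sets $A_{k,n}$ puts any two atoms in a common piece, so you never need independence across cells. You then settle each finite piece with the conditional i.i.d.\ (mixed binomial) representation or with $\E N^{(2)}=\Lambda\otimes\Lambda$.

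The two routes trade off as follows. The paper's uses only the defining properties of a Poisson random measure and elementary tail bounds. Its discretization also shows that the coincidence event lies inside events of arbitrarily small probability, so measurability is handled by completeness. Your route is shorter but relies on heavier standard facts. The mixed binomial description is a statement about the law of $N$ that must be transferred to the given $N$, and the factorial-moment identity is the multivariate Mecke formula.

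With the factorial-moment identity, the reduction to finite pieces is not needed at all. Tonelli's theorem for the $\sigma$-finite measure $\Lambda\otimes\Lambda$ gives directly
$(\Lambda\otimes\Lambda)(\{t=t'\})=\int\Lambda(\{s\}\times E\times\Rp\times\Rp)\,\Lambda(ds,dy,dz,du)=0$.

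A minor wording point: $\bigcup_{k,n}A_{k,n}=[0,T]\times E\times(0,\infty)\times\Rp$ holds regardless of $\ell(\{0\})$. The hypothesis $\ell(\{0\})=0$ is what makes this set carry all of $\Lambda$.
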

For a Borel kernel $K:(0,T]\times E\times E\to\Rp$ and a {$\Pcal\otimes\Bcal(\Rp)\otimes\Bcal(\Rp)$-measurable $W$ with values in $[0,1]$, where $\Pcal$ is taken relative to $(\Fcal_t^N)$}, set
\begin{equation}\label{eq:volterra_integral}
\Psi_t(x)=\int_{(0,t)\times E\times\Rp\times\Rp}K(t-s,x,y)z\,W(s,y,z,u)\,N(ds,dy,dz,du).
\end{equation}
The non-negative integral is defined pathwise in $[0,\infty]$. The open interval $(0,t)$ is our \emph{strict-past convention}, which permits $K$ to be singular at zero. Because the terminal index $t$ also enters the integrand, predictability of the resulting field is not automatic.

\begin{lemma}\label{lem:predictability}
Assume that $\ell((\varepsilon,\infty))<\infty$ for every $\varepsilon>0$. Then the field $\Psi$ in \eqref{eq:volterra_integral} is $\Fcal_t^N$-predictable, hence jointly measurable and adapted.
\end{lemma}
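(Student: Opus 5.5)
We need to show that $(\omega,t,x)\mapsto\Psi_t(x)$ is $\Pcal$-measurable, where $\Pcal$ is the predictable $\sigma$-algebra relative to $(\Fcal^N_t)$. The difficulty is that $t$ enters both the integrand $K(t-s,x,y)$ and the domain $(0,t)$. The plan is to truncate in the mark and in time, reduce to finitely many atoms, and show that each truncated field is a countable sum of predictable terms. Predictability is then preserved by monotone limits.

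\textbf{Step 1: truncation in the mark.} For $\varepsilon>0$ and $n\in\N$ define $\Psi^{\varepsilon,n}_t(x)$ as in \eqref{eq:volterra_integral}, with the integrand multiplied by $\mathbf 1_{\{z>\varepsilon\}}\mathbf 1_{\{u\le n\}}$. As $\varepsilon\downarrow0$ and $n\uparrow\infty$ the integrand increases, so $\Psi^{\varepsilon,n}_t(x)\uparrow\Psi_t(x)$ pointwise by monotone convergence. This uses that the integrand is non-negative, since $W\in[0,1]$ and $K,z\ge0$. It therefore suffices to prove predictability of $\Psi^{\varepsilon,n}$. The restricted measure $N^{\varepsilon,n}=N(\cdot\cap\{z>\varepsilon,u\le n\})$ has finite intensity $T\,m(E)\,\ell((\varepsilon,\infty))\,n<\infty$. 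Hence it has a.s. finitely many atoms $(\tau_k,Y_k,Z_k,U_k)$, and by Lemma~\ref{lem:distinct_times} the times are distinct and positive. Enumerate them in time order. Each $\tau_k$ is an $(\Fcal^N_t)$-stopping time, and $(Y_k,Z_k,U_k)$ is $\Fcal^N_{\tau_k}$-measurable. Then
\[
\Psi^{\varepsilon,n}_t(x)=\sum_k \mathbf 1_{\{\tau_k<t\}}K(t-\tau_k,x,Y_k)\,Z_k\,W(\tau_k,Y_k,Z_k,U_k),
\]
with the convention that a term vanishes when $\tau_k=\infty$.

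\textbf{Step 2: each summand is predictable.} This is the main obstacle. Write the summand as $G(\omega,t,x)=\mathbf 1_{\{\tau<t\}}h(t-\tau,x,\xi)$, where $\xi=(Y,Z,W(\tau,Y,Z,U))$ is $\Fcal_\tau$-measurable and $h$ is Borel. The natural first attempt is a monotone class argument in $h$, reducing to product functions $h(r,x,\xi)=a(r)b(x)c(\xi)$. The factor $b(x)$ is deterministic and harmless, and $\mathbf 1_{\{\tau<t\}}c(\xi)$ is left-continuous and adapted in $t$, hence predictable. The remaining factor $a(t-\tau)\mathbf 1_{\{\tau<t\}}$ needs more care. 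For $a=\mathbf 1_{(r,\infty)}$ it equals $\mathbf 1_{\{\tau+r<t\}}$, which is left-continuous and adapted since $\tau+r$ is a stopping time, so it is predictable. These indicators generate the Borel sets of $(0,\infty)$ through a $\pi$-system, and so a monotone class argument covers every bounded Borel $a$. Unbounded non-negative $K$ is handled by truncating $K\wedge j$ and passing to the monotone limit.

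A further subtlety is that $c(\xi)\mathbf 1_{\{\tau<t\}}$ is $\Fcal_\tau$-measurable times an indicator of $\rrbracket\tau,\infty\llbracket$. This is predictable by the standard fact that $\mathbf 1_A\mathbf 1_{\rrbracket\tau,\infty\llbracket}$ is predictable for $A\in\Fcal_\tau$. I would quote \cite[I.2]{JacodShiryaev2003}.

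\textbf{Step 3: conclusion.} Countable sums and monotone limits of non-negative predictable fields are predictable, so $\Psi$ is predictable with values in $[0,\infty]$. Joint measurability follows from $\Pcal\subset\Fcal\otimes\Bcal([0,T])\otimes\Ecal$. Adaptedness follows because predictable processes are adapted, since the generators $A\times(s,t]\times B$ have sections in $\Fcal_r$ at each time $r$. Measurability of $W$ evaluated at the atom, that is $\Fcal_{\tau_k}$-measurability of $W(\tau_k,Y_k,Z_k,U_k)$, follows from the $\Pcal\otimes\Bcal\otimes\Bcal$-measurability of $W$, because predictable processes stopped at stopping times are $\Fcal_{\tau-}$-measurable.
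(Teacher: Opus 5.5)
Your proposal is correct and follows essentially the same route as the paper: restrict $N$ to a finite-intensity piece, enumerate its atoms in time order as stopping times with $\Fcal^N_{\tau_k}$-measurable marks, show that each summand $\mathbf 1_{\{\tau_k<t\}}K(t-\tau_k,x,Y_k)Z_kW_k$ is predictable, and pass to the monotone limit. The only difference is cosmetic. The paper obtains predictability of the summand by composing the Borel kernel $K$ with the $\Pcal$-measurable map $(t-\sigma_i,x,Y_i)$ on $\{\sigma_i<t\}$, using that $E$ is Polish so the Borel $\sigma$-algebra of $(0,T]\times E\times E$ is the product one. You run the equivalent monotone class argument over product functions, which implicitly relies on that same product-$\sigma$-algebra fact.
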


\section{Construction and affine structure}\label{sec:existence}
Throughout, the Poisson random measure $N$ has intensity $\Lambda$ given by
\begin{equation}\label{eq:N_intensity_measure}
\Lambda_N(ds,dy,dz,du)=ds\,m(dy)\,\ell(dz)\,du,
\end{equation}
where $\ell$ is supported on $(0,\infty)$ and
$\overline z:=\int_0^\infty z\,\ell(dz)<\infty$. Note in particular that this entails that $\ell((\varepsilon,\infty))<\infty$ for every $\varepsilon>0$, such that Lemma~\ref{lem:distinct_times} and Lemma~\ref{lem:predictability} apply. Let $K:(0,T]\times E\times E\to\Rp$ be a Borel kernel satisfying the following row-integrability condition.
\begin{definition}\label{def:admissible}
The kernel $K$ is \emph{admissible} if
\[
k(r):=\sup_{x\in E}\int_EK(r,x,y)\,m(dy)
\]
is Borel measurable and belongs to $L^1((0,T])$.
\end{definition}
We extend every admissible kernel to the diagonal by setting
\[
K(0,x,y):=0,\qquad x,y\in E.
\]
This convention leaves the strict-past stochastic integrals of the form \eqref{eq:volterra_integral} unchanged and simplifies deterministic formulas. It follows automatically from admissibility that

\[
\sup_{(t,x)\in[0,T]\times E}\int_{0}^{t}\int_E K(t-s,x,y)\,m(dy)\,ds \le \int_0^T k(r)\,dr<\infty.
\]
\begin{remark}\label{rem:coefficient_scope}
  We treat the scalar, non-negative-kernel case and use constants $\lambda_0,\lambda_1$ in the main statements to keep the transform equations transparent. Proposition~\ref{prop:inhomogeneous_coefficients} in Appendix~\ref{app:inhomogeneous_coefficients} records the extension to bounded non-negative spatial coefficients $\lambda_0(\cdot)$ and $\lambda_1(\cdot)$, including the modified contraction constant, compensator, Riccati--Volterra equations, and Laplace transforms. Vector- and operator-valued kernels require additional positivity structures and lie outside the present scope.
\end{remark}

\begin{example}\label{ex:admissible_kernels}
  The admissibility criteria include time-singular kernels. For example, let $G:E\times E\to\Rp$ be measurable with $\sup_{x\in E}\int_E G(x,y)\,m(dy)<\infty$. Then $K(r,x,y)=r^{\alpha-1}G(x,y)$ is admissible for every $\alpha\in(0,1)$ because $k(r)=r^{\alpha-1}\sup_x\int_EG(x,y)\,m(dy)$ and
  \[
  \int_{0}^Tk(r)\,dr = \frac{T^\alpha}{\alpha}\sup_{x\in E}\int_{E}G(x,y)\,m(dy)<\infty.
  \]
  It follows that kernels of the form $K(r,x,y)=r^{\alpha-1}p_r(x,y)$, where $p_r$ is a Markov transition density on $E$ with respect to $m$, are also admissible, since $\int_Ep_r(x,y)\,m(dy)=1$ for all $r> 0$.
\end{example}

For the remainder of this Section, fix constants $\lambda_0,\lambda_1\geq 0$, a measurable function $\phi:[0,T]\times E\to\Rp$, written $\phi_t(x)$, with $\lVert\phi\rVert_\infty := \sup_{(t,x)\in [0,T]\times E}\phi_t(x)<\infty$, and an admissible kernel function $K$.

\begin{definition}\label{def:strong_solution}
A \emph{PRM-strong predictable solution} is a non-negative $\Fcal_t^N$-predictable field with values in $[0,\infty]$, finite almost surely at each deterministic $(t,x)$, satisfying \eqref{eq:SVE_intro} almost surely at every deterministic $(t,x)$, on the prescribed Poisson space.
\end{definition}

For $\eta\geq 0$, we denote by $\mathcal{H}_\eta$ the space of all (equivalence classes of) $\Fcal_t^N$-predictable random fields $X$ with
\begin{equation}\label{eq:H_eta_norm}
\lVert X\rVert_\eta := \esssup_{(t,x)\in [0,T]\times E}e^{-\eta t}\,\E\left[ \lvert X_t(x)\rvert \right] < \infty,
\end{equation}
where the essential supremum is taken with respect to $dt\otimes m$ and two random fields are identified whenever they agree $\Prob\otimes dt\otimes m$-almost everywhere. The map $(t,x)\mapsto \E[\lvert X_t(x)\rvert]$ is measurable for every jointly measurable field $X$ by Tonelli's theorem, so that \eqref{eq:H_eta_norm} is well defined, and that $\lVert X\rVert_\eta = 0$ if and only if $X=0$ holds $\Prob\otimes dt\otimes m$-almost everywhere. Hence $\lVert\cdot\rVert_\eta$ is indeed a norm on $\mathcal{H}_\eta$. We let $\mathcal{H}_\eta^+\subseteq \mathcal{H}_\eta$ denote the cone of elements admitting a non-negative representative, i.e. $X_t(x)\geq 0$ almost surely and for $dt\otimes m$-almost all $(t,x)$. We further define
\begin{equation}\label{eq:J_def}
J_\eta(T) = \sup_{(t,x)\in [0,T]\times E}\int_{0}^{t}e^{-\eta (t-s)}\int_{E}K(t-s,x,y)\,m(dy)\,ds \leq \int_0^T e^{-\eta r}k(r)\,dr,
\end{equation}
and note that $J_\eta(T)\to0$ as $\eta\to\infty$ by dominated convergence.

For a non-negative predictable random field $X$, we define the random field $\Psi(X)$ pathwise by
\begin{equation}\label{eq:psi_operator}
\Psi (X)(t,x) = \phi_t(x) + \int_{(0,t)\times E\times \Rp\times \Rp} K(t-s,x,y)\,z\,\mathbf{1}_{\lbrace u\leq \lambda_0+\lambda_1 X_{s}(y) \rbrace}\,N(ds,dy,dz,du).
\end{equation}
Since $(\omega,s,y,z,u)\mapsto \mathbf{1}_{\lbrace u\leq \lambda_0+\lambda_1X_s(y)\rbrace}$ is $\Pcal\otimes\Bcal(\Rp)\otimes\Bcal(\Rp)$-measurable and $\phi$ is deterministic and measurable, Lemma~\ref{lem:predictability} shows that $\Psi(X)$ is a non-negative predictable random field. The following lemma contains the central $L^1$-estimate and in particular shows that $\Psi$ is well defined on equivalence classes.

\begin{lemma}\label{lem:L1_estimate}
  Let $X,\widetilde{X}$ be non-negative predictable random fields with $\lVert X\rVert_\eta<\infty$ and $\lVert\widetilde{X}\rVert_\eta<\infty$ for some $\eta\geq 0$. Then $\Psi(X)(t,x)$ and $\Psi(\widetilde{X})(t,x)$ as defined by \eqref{eq:psi_operator} are almost surely finite and satisfy
  \begin{equation}\label{eq:delta_bound}
  \E\left[\bigl\lvert\Psi(X)(t,x)-\Psi(\widetilde{X})(t,x)\bigr\rvert\right] \leq \overline{z}\lambda_1\int_{0}^{t}\int_{E}K(t-s,x,y)\, \E\left[ \lvert X_{s}(y)-\widetilde{X}_{s}(y)\rvert \right] m(dy)\,ds,
  \end{equation}
  for every $(t,x)\in [0,T]\times E$. If $X=\widetilde{X}$ holds $\Prob\otimes dt\otimes m$-almost everywhere, then $\Psi(X)(t,x)=\Psi(\widetilde{X})(t,x)$ almost surely, for every $(t,x)\in[0,T]\times E$.
\end{lemma}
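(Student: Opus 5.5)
The plan is to bound the difference pathwise by a single non-negative stochastic integral against $N$, and then take expectations with the Campbell formula \eqref{eq:campbell}. The deterministic term $\phi_t(x)$ cancels. For non-negative predictable $X$, the finiteness claim comes first: by \eqref{eq:campbell} and Tonelli,
\[
\E\bigl[\Psi(X)(t,x)\bigr]=\phi_t(x)+\int_0^t\int_E K(t-s,x,y)\,\overline z\,\bigl(\lambda_0+\lambda_1\E[X_s(y)]\bigr)\,m(dy)\,ds .
\]
Since $\lVert X\rVert_\eta<\infty$, we have $\E[X_s(y)]\le e^{\eta T}\lVert X\rVert_\eta$ for $ds\otimes m$-a.e.\ $(s,y)$. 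The integral is therefore at most $\lVert\phi\rVert_\infty+\overline z\bigl(\lambda_0+\lambda_1e^{\eta T}\lVert X\rVert_\eta\bigr)\int_0^Tk(r)\,dr<\infty$. Here admissibility is used: $\int_0^t\int_E K(t-s,x,y)\,m(dy)\,ds\le\int_0^T k(r)\,dr$. Two points need care. The integrand $(s,y,z,u)\mapsto K(t-s,x,y)z\mathbf 1_{\{u\le\cdots\}}$, with $t$ fixed, is predictable on $(0,t)$, since $K(t-s,x,y)$ is deterministic and Borel in $s$. Also, integrating $du$ over the indicator gives exactly $\lambda_0+\lambda_1X_s(y)$. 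A non-negative variable with finite mean is a.s.\ finite, so $\Psi(X)(t,x)<\infty$ a.s., and the same holds for $\widetilde X$.

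For \eqref{eq:delta_bound}, both integrals are a.s.\ finite, so their difference is a.s.\ well defined. Pathwise,
\[
\bigl|\Psi(X)(t,x)-\Psi(\widetilde X)(t,x)\bigr|\le\int_{(0,t)\times E\times\Rp\times\Rp}K(t-s,x,y)\,z\,\bigl|\mathbf 1_{\{u\le a_s(y)\}}-\mathbf 1_{\{u\le \widetilde a_s(y)\}}\bigr|\,N(ds,dy,dz,du),
\]
where $a=\lambda_0+\lambda_1X$ and $\widetilde a=\lambda_0+\lambda_1\widetilde X$. The integrand is non-negative and predictable, because it is the indicator of $\{\min(a,\widetilde a)<u\le\max(a,\widetilde a)\}$. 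Applying \eqref{eq:campbell} and integrating in $u$ first gives $\int_0^\infty|\mathbf 1_{\{u\le a\}}-\mathbf 1_{\{u\le\widetilde a\}}|\,du=|a-\widetilde a|=\lambda_1|X_s(y)-\widetilde X_s(y)|$. Integrating in $z$ then gives the factor $\overline z$, and Tonelli yields exactly the right-hand side of \eqref{eq:delta_bound}.

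For the last assertion, apply the same bound. If $X=\widetilde X$ holds $\Prob\otimes dt\otimes m$-a.e., then $\E|X_s(y)-\widetilde X_s(y)|=0$ for $ds\otimes m$-a.e.\ $(s,y)$. The right-hand side of \eqref{eq:delta_bound} therefore vanishes for every $(t,x)$, so the non-negative difference is zero almost surely.

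The main obstacle I expect is the justification of Campbell's formula for this integrand. The integrand depends on the terminal time $t$ only through the deterministic factor $K(t-s,x,y)$, so for fixed $(t,x)$ it is predictable in $(\omega,s)$. The remaining concern is the singularity of $K$ at $s=t$, which is harmless because $K(0,\cdot,\cdot):=0$ and $(0,t)$ is open. A second point is that the essential-supremum bound only controls $\E[X_s(y)]$ almost everywhere. This is exactly enough, because the right-hand side integrates against $ds\otimes m$.
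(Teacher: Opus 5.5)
Your proposal is correct and follows essentially the same route as the paper's proof. Both establish almost-sure finiteness via \eqref{eq:campbell} and the bound $\E[X_s(y)]\le e^{\eta T}\lVert X\rVert_\eta$ for $ds\otimes m$-a.e.\ $(s,y)$, then bound the difference pathwise by the $N$-integral of $\mathbf{1}_{\{a\wedge b<u\le a\vee b\}}$ and integrate out $u$ to obtain $\lambda_1\lvert X_s(y)-\widetilde{X}_s(y)\rvert$, and finally conclude the last claim from the vanishing right-hand side of \eqref{eq:delta_bound}.
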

\begin{proof}
  Since $\int_{0}^\infty\mathbf{1}_{\lbrace u\leq a\rbrace}\,du=a$ for $a\geq0$, equation \eqref{eq:campbell} yields
  \[
  \begin{aligned}
  \E\left[\Psi(X)(t,x)\right] &= \phi_t(x) + \overline{z}\int_{0}^{t}\int_E K(t-s,x,y)\left(\lambda_0+\lambda_1\E\left[X_s(y)\right]\right) m(dy)\,ds \\
  &\leq \lVert\phi\rVert_\infty + \overline{z}\left(\lambda_0+\lambda_1 e^{\eta T}\lVert X\rVert_\eta\right)\int_0^Tk(r)\,dr,
  \end{aligned}
  \]
  where we used that $\E[X_s(y)]\leq e^{\eta s}\lVert X\rVert_\eta\le e^{\eta T}\lVert X\rVert_\eta$ for $ds\otimes m$-a.e. $(s,y)$, which suffices since the exceptional set is not charged by $m(dy)\,ds$. Hence $\Psi(X)(t,x)<\infty$ almost surely, and similarly for $\widetilde{X}$, so that the difference $\Delta_t(x):=\Psi(X)(t,x)-\Psi(\widetilde{X})(t,x)$ is almost surely well defined. Using that $\lvert \mathbf{1}_{\lbrace u\leq a\rbrace}-\mathbf{1}_{\lbrace u\leq b\rbrace}\rvert = \mathbf{1}_{\lbrace a\wedge b< u\leq a\vee b\rbrace}$, we obtain the pathwise inequality
  \[
  \lvert \Delta_t(x)\rvert \leq \int_{(0,t)\times E \times \Rp\times \Rp}K(t-s,x,y)\,z\,\bigl\lvert \mathbf{1}_{\lbrace u\leq \lambda_0+\lambda_1X_{s}(y)\rbrace} - \mathbf{1}_{\lbrace u \leq \lambda_0 + \lambda_1\widetilde{X}_{s}(y)\rbrace}\bigr\rvert\, N(ds,dy,dz,du),
  \]
  whose integrand is again $\Pcal\otimes\Bcal(\Rp)\otimes\Bcal(\Rp)$-measurable. Taking expectations, applying the formula \eqref{eq:campbell} and using that $\int_{0}^\infty \lvert \mathbf{1}_{\lbrace u\leq a\rbrace}-\mathbf{1}_{\lbrace u\leq b\rbrace}\rvert\, du =\lvert a - b\rvert$ yields \eqref{eq:delta_bound}. Finally, if $X=\widetilde{X}$ holds $\Prob\otimes dt\otimes m$-a.e., then $\E[\lvert X_s(y)-\widetilde{X}_s(y)\rvert]=0$ for $ds\otimes m$-a.e. $(s,y)$, so the right-hand side of \eqref{eq:delta_bound} vanishes and $\Delta_t(x)=0$ almost surely, for every $(t,x)$.
\end{proof}

\begin{theorem}\label{thm:Psi_contraction}
  The operator $\Psi$ defined by \eqref{eq:psi_operator} maps $\mathcal{H}_\eta^+$ into itself for every $\eta\geq 0$ and satisfies
  \begin{equation}\label{eq:Psi_bound}
  \lVert \Psi(X)\rVert_\eta \leq \lVert \phi\rVert_{\infty}+\overline{z}\left(\lambda_0+\lambda_1\lVert X\rVert_\eta\right)J_\eta(T), \qquad X\in\mathcal{H}_\eta^+.
  \end{equation}
  Moreover, for every $\eta>0$ such that $\overline{z}\lambda_1 J_\eta(T)<1$, the operator $\Psi$ is a contraction on $\mathcal{H}_\eta^+$ and therefore admits a unique fixed point $Y\in\mathcal{H}_\eta^+$, i.e., $Y=\Psi(Y)$ in $\mathcal{H}_\eta$.
\end{theorem}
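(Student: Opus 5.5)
The argument has three steps: show that $\Psi$ maps $\mathcal{H}_\eta^+$ into itself with the bound \eqref{eq:Psi_bound}, show that it is a contraction, and then apply the Banach fixed point theorem on a complete space.

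\textbf{Step 1: $\Psi$ maps $\mathcal{H}_\eta^+$ into itself.} Fix $X\in\mathcal{H}_\eta^+$ and pick a non-negative predictable representative. Lemma~\ref{lem:predictability} shows that $\Psi(X)$ is a non-negative predictable field. Lemma~\ref{lem:L1_estimate} shows that $\Psi(X)$ does not depend on the choice of representative (up to modification at each $(t,x)$), hence not up to $\Prob\otimes dt\otimes m$-null sets either. For the bound, I start from the identity in the proof of Lemma~\ref{lem:L1_estimate}:
\[
\E[\Psi(X)(t,x)]=\phi_t(x)+\overline z\int_0^t\int_E K(t-s,x,y)\bigl(\lambda_0+\lambda_1\E[X_s(y)]\bigr)\,m(dy)\,ds.
\]
I multiply by $e^{-\eta t}$ and write $e^{-\eta t}=e^{-\eta(t-s)}e^{-\eta s}$ inside the integral. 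For $ds\otimes m$-a.e. $(s,y)$ we have $e^{-\eta s}\E[X_s(y)]\le\lVert X\rVert_\eta$, and the trivial bounds $e^{-\eta t}\le 1$ and $e^{-\eta t}\lambda_0\le e^{-\eta(t-s)}\lambda_0$ handle the remaining terms. Together with the definition \eqref{eq:J_def} of $J_\eta(T)$ this gives
\[
e^{-\eta t}\E[\Psi(X)(t,x)]\le\lVert\phi\rVert_\infty+\overline z\bigl(\lambda_0+\lambda_1\lVert X\rVert_\eta\bigr)J_\eta(T)
\]
for every $(t,x)$. Taking the essential supremum yields \eqref{eq:Psi_bound}.

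\textbf{Step 2: contraction.} For $X,\widetilde X\in\mathcal{H}_\eta^+$ I apply \eqref{eq:delta_bound}, multiply by $e^{-\eta t}$ and split the exponential in the same way. This gives
\[
\lVert\Psi(X)-\Psi(\widetilde X)\rVert_\eta\le\overline z\lambda_1J_\eta(T)\,\lVert X-\widetilde X\rVert_\eta .
\]
Whenever $\overline z\lambda_1J_\eta(T)<1$, the map $\Psi$ is therefore a strict contraction. Such an $\eta$ exists because $J_\eta(T)\to0$ as $\eta\to\infty$.

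\textbf{Step 3: completeness (main obstacle).} The remaining step, and the one I expect to need the most care, is showing that $\mathcal{H}_\eta^+$ is complete, i.e. a closed subset of a Banach space. The plan is as follows.

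\begin{enumerate}
\item Since $e^{-\eta T}\le e^{-\eta t}\le1$, the norm $\lVert\cdot\rVert_\eta$ is equivalent to $\lVert\cdot\rVert_0$, so it suffices to treat $\eta=0$.
\item Take a Cauchy sequence $(X^n)$. By passing to a subsequence I can assume $\sum_n\lVert X^{n+1}-X^n\rVert_0<\infty$.
\item The function $S=\sum_n\lvert X^{n+1}-X^n\rvert$ is predictable. By Tonelli's theorem, $\E[S_t(x)]\le\sum_n\lVert X^{n+1}-X^n\rVert_0$ for a.e. $(t,x)$, so $S<\infty$ $\Prob\otimes dt\otimes m$-a.e.
\item Consequently $X^n$ converges a.e. to $X:=\limsup_n X^n\mathbf 1_{\{S<\infty\}}$, which is predictable as a limsup of predictable fields.
\item The estimate $\E\lvert X-X^n\rvert\le\sum_{k\ge n}\lVert X^{k+1}-X^k\rVert_0$ holds a.e., which gives convergence in norm. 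Finiteness of $\lVert X\rVert_0$ then follows from the triangle inequality.
\item If every $X^n$ is non-negative a.e., so is the limit $X$, hence $\mathcal{H}_0^+$ is closed.
\end{enumerate}

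\textbf{Conclusion.} The Banach fixed point theorem applied to $\Psi$ on the complete metric space $\mathcal{H}_\eta^+$ gives a unique fixed point $Y$ with $Y=\Psi(Y)$ in $\mathcal{H}_\eta$.
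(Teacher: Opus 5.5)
Your proposal is correct and follows the paper's proof essentially step for step: the same Campbell-formula bound using the split $e^{-\eta t}=e^{-\eta(t-s)}e^{-\eta s}$, the same contraction estimate obtained from \eqref{eq:delta_bound}, and the same application of the Banach fixed point theorem on the closed cone $\mathcal{H}_\eta^+$. Your completeness argument in Step~3 is the paper's Lemma~\ref{lem:H_eta_Banach} (fast Cauchy subsequence, predictable limsup representative, a.e.\ limit preserving non-negativity), differing only cosmetically in that you first reduce to $\eta=0$ by norm equivalence and you define the convergence set as the predictable set $\{S<\infty\}$.
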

\begin{proof}
  Let $X\in\mathcal{H}_\eta^+$ with non-negative predictable representative, again denoted $X$. By Lemma~\ref{lem:predictability}, $\Psi(X)$ is a non-negative predictable random field, and by Lemma~\ref{lem:L1_estimate} its equivalence class does not depend on the choice of representative. As in the proof of Lemma~\ref{lem:L1_estimate}, equation \eqref{eq:campbell} yields that for every $(t,x)\in[0,T]\times E$,
  \[
  \begin{aligned}
  e^{-\eta t}\,\E\left[\Psi(X)(t,x)\right] &= e^{-\eta t}\phi_t(x) + \overline{z}\int_{0}^{t}\int_E e^{-\eta(t-s)}K(t-s,x,y)\,e^{-\eta s}\left(\lambda_0+\lambda_1\E\left[X_s(y)\right]\right) m(dy)\,ds \\
  &\leq \lVert\phi\rVert_\infty + \overline{z}\left(\lambda_0+\lambda_1\lVert X\rVert_\eta\right)J_\eta(T),
  \end{aligned}
  \]
  where we have used that $e^{-\eta s}\lambda_0\le\lambda_0$ and $e^{-\eta s}\E[X_s(y)]\leq \lVert X\rVert_\eta$ for $ds\otimes m$-a.e. $(s,y)$ (again the exceptional $ds\otimes m$-null set does not contribute to the integral). Taking the essential supremum over $(t,x)$ yields \eqref{eq:Psi_bound}, and we conclude that $\Psi(X)\in\mathcal{H}_\eta^+$. For the contraction property, let $X,\widetilde{X}\in\mathcal{H}_\eta^+$. Multiplying \eqref{eq:delta_bound} of Lemma~\ref{lem:L1_estimate} by $e^{-\eta t}$ gives
  \[
  \begin{aligned}
  e^{-\eta t}\,&\E\left[\bigl\lvert\Psi(X)(t,x)-\Psi(\widetilde{X})(t,x)\bigr\rvert\right] \\
  &\leq \overline{z}\lambda_1\int_{0}^{t}\int_{E}e^{-\eta(t-s)}K(t-s,x,y)\,e^{-\eta s}\,\E\left[ \lvert X_{s}(y)-\widetilde{X}_{s}(y)\rvert \right] m(dy)\,ds \\
  &\leq \overline{z}\lambda_1 J_\eta(T)\,\lVert X-\widetilde{X}\rVert_\eta,
  \end{aligned}
  \]
  where we have used the bound $e^{-\eta s}\E[\lvert X_s(y)-\widetilde{X}_s(y)\rvert]\leq \lVert X-\widetilde{X}\rVert_\eta$, which is valid for $ds\otimes m$-a.e. $(s,y)$. Hence
  \[
  \lVert \Psi (X)-\Psi(\widetilde{X})\rVert_\eta \leq \overline{z}\lambda_1 J_\eta (T)\,\lVert X-\widetilde{X}\rVert_\eta.
  \]
  Since $J_\eta(T)\to0$ as $\eta\to\infty$, we may choose $\eta>0$ with $\overline{z}\lambda_1J_\eta(T)<1$, and then $\Psi$ is a contraction on $\mathcal{H}_\eta^+$, which is a closed subset of the Banach space $\mathcal{H}_\eta$ by Lemma~\ref{lem:H_eta_Banach}. The Banach fixed point theorem yields a unique fixed point $Y\in\mathcal{H}_\eta^+$.
\end{proof}

The fixed point in $\mathcal H_\eta$ admits the following canonical pointwise representative.

\begin{lemma}\label{lem:pointwise_reconstruction}
Fix $\eta>0$ with $\overline z\lambda_1J_\eta(T)<1$, let $Y\in\mathcal H_\eta^+$ be the unique fixed point of $\Psi$, choose a non-negative predictable representative of $Y$, and define $V:=\Psi(Y)$ pathwise by \eqref{eq:psi_operator}. Then $V=Y$ holds $\Prob\otimes dt\otimes m$-almost everywhere and, for every deterministic $(t,x)\in[0,T]\times E$,
\[
V_t(x)=\Psi(V)(t,x)\qquad\text{almost surely}.
\]
Consequently, $V$ is a PRM-strong predictable solution in the sense of Definition~\ref{def:strong_solution}.
\end{lemma}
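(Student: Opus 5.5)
The proof combines the fixed-point identity $Y=\Psi(Y)$ in $\mathcal H_\eta$ with the representative-independence part of Lemma~\ref{lem:L1_estimate}. Fix a non-negative predictable representative $Y$ of the fixed point and set $V:=\Psi(Y)$ pathwise via \eqref{eq:psi_operator}. By Lemma~\ref{lem:predictability}, $V$ is a non-negative $\Fcal_t^N$-predictable field with values in $[0,\infty]$. By Lemma~\ref{lem:L1_estimate}, applied with $X=\widetilde X=Y$, and since $\lVert Y\rVert_\eta<\infty$, the value $V_t(x)$ is almost surely finite at every deterministic $(t,x)$.

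\emph{Step 1: $V=Y$ holds $\Prob\otimes dt\otimes m$-a.e.} By Theorem~\ref{thm:Psi_contraction}, $\Psi(Y)$ and $Y$ define the same element of $\mathcal H_\eta$, so $\lVert V-Y\rVert_\eta=0$. Hence $\E[\lvert V_t(x)-Y_t(x)\rvert]=0$ for $dt\otimes m$-a.e. $(t,x)$. One point needs care: $V$ may take the value $+\infty$ on a null set, so the difference should be read on the set where both fields are finite. To handle this, I would note that $\E[V_t(x)]<\infty$ for every $(t,x)$ by the first display in the proof of Lemma~\ref{lem:L1_estimate}, so $V\in\mathcal H_\eta^+$ with a finite-valued modification. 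Tonelli's theorem applied to $\lvert V-Y\rvert$ on $\Omega\times[0,T]\times E$ then gives $V=Y$ outside a $\Prob\otimes dt\otimes m$-null set.

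\emph{Step 2: pointwise identity.} Both $V$ and $Y$ are non-negative predictable fields with finite $\lVert\cdot\rVert_\eta$-norm, and by Step 1 they agree $\Prob\otimes dt\otimes m$-a.e. The last assertion of Lemma~\ref{lem:L1_estimate} therefore gives $\Psi(V)(t,x)=\Psi(Y)(t,x)=V_t(x)$ almost surely, for every deterministic $(t,x)$. Unwinding the definition of $\Psi$, this is exactly equation \eqref{eq:SVE_intro} holding almost surely at each deterministic $(t,x)$ with $V$ inside the thinning indicator. Together with predictability, non-negativity and almost sure finiteness, this shows that $V$ is a PRM-strong predictable solution in the sense of Definition~\ref{def:strong_solution}.

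The main obstacle is the measure-theoretic bookkeeping in Step 1. The identity $Y=\Psi(Y)$ holds only in the quotient space, while $V$ is a specific pathwise object that may be infinite on null sets. The final clause of Lemma~\ref{lem:L1_estimate} is stated for non-negative predictable fields with finite norm, so it applies to $V$ once $\lVert V\rVert_\eta<\infty$ is checked. That bound comes from \eqref{eq:Psi_bound} or directly from the first display in the proof of Lemma~\ref{lem:L1_estimate}, so the argument is essentially just this careful verification.
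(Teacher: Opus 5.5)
Your proposal is correct and follows the paper's proof. The a.e.\ identity $V=Y$ comes from the fixed-point relation in $\mathcal H_\eta$, and the representative-independence clause of Lemma~\ref{lem:L1_estimate} then gives $\Psi(V)(t,x)=\Psi(Y)(t,x)=V_t(x)$ almost surely at each deterministic $(t,x)$; your explicit check that $\lVert V\rVert_\eta<\infty$, needed to apply that lemma with $X=V$, is a detail the paper leaves implicit.
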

\begin{proof}
Since $Y=\Psi(Y)$ in $\mathcal H_\eta$ and $V=\Psi(Y)$ pathwise, $V=Y$ holds $\Prob\otimes dt\otimes m$-almost everywhere. Lemma~\ref{lem:L1_estimate} shows that $\Psi$ is insensitive, at each deterministic $(t,x)$, to a change of its input on a $\Prob\otimes dt\otimes m$-null set. Hence $\Psi(V)(t,x)=\Psi(Y)(t,x)=V_t(x)$ almost surely for every deterministic $(t,x)$.
\end{proof}

\begin{theorem}\label{thm:strong_existence}
  There exists a PRM-strong predictable solution $V=(V_t(x))_{(t,x)\in[0,T]\times E}$ in the sense of Definition~\ref{def:strong_solution}, with
  \begin{equation}\label{eq:V_sup_moment_bound}
  \sup_{(t,x)\in [0,T]\times E}\E\left[ V_t(x) \right] < \infty,
  \end{equation}
  such that for every $(t,x)\in[0,T]\times E$ the stochastic Volterra equation
  \begin{equation}\label{eq:strong_solution}
  V_t(x) = \phi_t(x) + \int_{(0,t)\times E\times \Rp\times \Rp} K(t-s, x, y) \, z \, \mathbf{1}_{\lbrace {u \le \lambda_0 + \lambda_1 V_{s}(y)}\rbrace}\, N(ds, dy, dz, du),
  \end{equation}
  holds almost surely. Moreover, $V$ is unique in the following senses. If $\widetilde{V}$ is another non-negative predictable random field satisfying \eqref{eq:V_sup_moment_bound} and solving \eqref{eq:strong_solution} almost surely for every $(t,x)\in[0,T]\times E$, then $V=\widetilde{V}$ holds $\Prob\otimes dt \otimes m$-almost everywhere. For every fixed $(t,x)\in[0,T]\times E$, it holds that $\Prob(V_t(x)=\widetilde{V}_t(x))=1$. Therefore, for every fixed $t\in[0,T]$, $V_t(\cdot)=\widetilde{V}_t(\cdot)$ holds $m$-almost everywhere, almost surely.
\end{theorem}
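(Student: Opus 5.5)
Existence will follow from Theorem~\ref{thm:Psi_contraction} and Lemma~\ref{lem:pointwise_reconstruction}. Fix $\eta>0$ with $\overline z\lambda_1J_\eta(T)<1$; this is possible because $J_\eta(T)\to0$. Let $Y\in\mathcal H_\eta^+$ be the unique fixed point of $\Psi$, choose a non-negative predictable representative, and set $V:=\Psi(Y)$ pathwise. Lemma~\ref{lem:pointwise_reconstruction} then shows that $V$ is a PRM-strong predictable solution satisfying \eqref{eq:strong_solution} almost surely at every deterministic $(t,x)$.

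For the moment bound \eqref{eq:V_sup_moment_bound} we need a pointwise statement, not only an essential supremum. Apply the first estimate in the proof of Lemma~\ref{lem:L1_estimate} to $X=Y$. This gives, for every $(t,x)$,
\[
\E[V_t(x)]\le \lVert\phi\rVert_\infty+\overline z\bigl(\lambda_0+\lambda_1e^{\eta T}\lVert Y\rVert_\eta\bigr)\int_0^Tk(r)\,dr .
\]
The right-hand side does not depend on $(t,x)$, so the supremum over all $(t,x)$ is finite.

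For uniqueness, let $\widetilde V$ be non-negative, predictable, satisfy \eqref{eq:V_sup_moment_bound}, and solve \eqref{eq:strong_solution} at every deterministic $(t,x)$. The bound gives $\lVert\widetilde V\rVert_\eta\le\sup_{t,x}\E[\widetilde V_t(x)]<\infty$, so $\widetilde V\in\mathcal H_\eta^+$. The solution property says $\widetilde V_t(x)=\Psi(\widetilde V)(t,x)$ almost surely for each $(t,x)$. To pass to equality in $\mathcal H_\eta$, I will apply Tonelli to the jointly measurable non-negative function $(\omega,t,x)\mapsto|\widetilde V_t(x)-\Psi(\widetilde V)(t,x)|$. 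Its expectation vanishes at every $(t,x)$, so the two fields agree $\Prob\otimes dt\otimes m$-a.e. Hence $\widetilde V$ is a fixed point of $\Psi$ in $\mathcal H_\eta^+$, and uniqueness in Theorem~\ref{thm:Psi_contraction} yields $\widetilde V=Y=V$ almost everywhere.

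For the pointwise statement, fix a deterministic $(t,x)$. Since $V=\widetilde V$ a.e., Lemma~\ref{lem:L1_estimate} gives $\Psi(V)(t,x)=\Psi(\widetilde V)(t,x)$ almost surely. Combining this with the two solution identities, $V_t(x)=\Psi(V)(t,x)$ and $\widetilde V_t(x)=\Psi(\widetilde V)(t,x)$, gives $\Prob(V_t(x)=\widetilde V_t(x))=1$. Finally, fix $t$ and apply Tonelli on $\Omega\times E$ to $\mathbf 1_{\{V_t(x)\ne\widetilde V_t(x)\}}$. Joint measurability in $(\omega,x)$ holds because predictable fields are jointly measurable (Lemma~\ref{lem:predictability}). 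This yields $\E\bigl[m(\{x:V_t(x)\ne\widetilde V_t(x)\})\bigr]=0$, so $V_t(\cdot)=\widetilde V_t(\cdot)$ $m$-a.e., almost surely.

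The main technical care lies in the step from "almost surely at each deterministic point" to "almost everywhere in $\Prob\otimes dt\otimes m$". This passage is what places $\widetilde V$ into the fixed-point framework. It relies on joint measurability of both $\widetilde V$ and $\Psi(\widetilde V)$, which is exactly what Lemma~\ref{lem:predictability} provides.
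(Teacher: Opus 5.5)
Your proposal is correct and follows the paper's proof essentially step by step: existence via the contraction fixed point and Lemma~\ref{lem:pointwise_reconstruction}, the pointwise moment bound via the Campbell estimate applied to $\Psi(Y)$, uniqueness by placing $\widetilde V$ in $\mathcal H_\eta^+$ as a fixed point, the pointwise identity via Lemma~\ref{lem:L1_estimate}, and the final Tonelli step. The only cosmetic difference is that the paper first derives an explicit a priori bound on $\lVert Y\rVert_\eta$ from \eqref{eq:Psi_bound}, whereas you rely directly on the finiteness $\lVert Y\rVert_\eta<\infty$, which is equally valid.
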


The canonical convolution $V=\Psi(Y)$ is understood with values in $[0,\infty]$ and is finite almost surely at each deterministic $(t,x)$ and outside a $\Prob\otimes dt\otimes m$-null set. Simultaneous identities refer to this representative and do not claim simultaneous finiteness over all $(t,x)$. Replacing exceptional infinite values arbitrarily need not preserve such identities and for any non-negative integrals and products we use the convention $0\cdot\infty=0$.

\begin{remark}\label{rem:random_phi}
  The driver $\phi$ may itself be stochastic. Let $\Fcal_0\subseteq\Fcal$ be a sub-$\sigma$-algebra and let $(\widetilde{\Fcal}_t)$ be the usual augmentation of $(\Fcal_0\vee\Fcal_t^N)$. Assume that $\phi:\Omega\times[0,T]\times E\to\Rp$ is $\Fcal_0\otimes\Bcal([0,T])\otimes\Ecal$-measurable and satisfies $\sup_{(t,x)}\E[\phi_t(x)]<\infty$. Assume also that $\Lambda_N$ remains the predictable compensator of $N$ in $(\widetilde{\Fcal}_t)_{t\geq 0}$. Under these assumptions, Theorems~\ref{thm:Psi_contraction} and~\ref{thm:strong_existence} remain valid with predictability relative to $\widetilde{\Fcal}_t$ and with $\lVert\phi\rVert_\infty$ replaced by $\sup_{(t,x)}\E[\phi_t(x)]$ in the estimates. The pointwise first-moment bound on $\phi$ preserves \eqref{eq:V_sup_moment_bound} and the almost-sure solution identity for every $(t,x)$. Independence of $\Fcal_0$ and $\Fcal^N$ is a sufficient condition for the compensator assumption and therefore for equation~\eqref{eq:campbell} in the enlarged filtration.
\end{remark}

\subsection{Affine jump measure}\label{sec:affine_structure}
Define the accepted-jump measure $\mu$ on $(0,T]\times E\times\Rp$ by
\begin{equation}\label{eq:thinned_measure}
\mu(A) := \int_{(0,T]\times E\times\Rp\times\Rp}\mathbf{1}_{A}(s,y,z)\,\mathbf{1}_{\lbrace u\leq \lambda_0+\lambda_1V_{s}(y)\rbrace}(u)\,N(ds,dy,dz,du).
\end{equation}
Thus $\mu$ is an integer-valued random measure. If $\ell((0,\infty))<\infty$, it is locally finite. If $\ell((0,\infty))=\infty$, it has infinite activity whenever the integrated acceptance intensity is positive. The stochastic convolution remains finite because $\overline z<\infty$.

\begin{proposition}\label{prop:affine_compensator}
  The predictable compensator (dual predictable projection) of $\mu$ is given by
  \begin{equation}\label{eq:affine_compensator}
  \nu(ds,dy,dz) = \left(\lambda_0+\lambda_1V_{s}(y)\right)ds\,m(dy)\,\ell(dz).
  \end{equation}
{\emergencystretch=3em
  Thus $\Lambda_s(y)=\lambda_0+\lambda_1V_s(y)$ is the predictable density multiplier of the accepted activity relative to $ds\,m(dy)\,\ell(dz)$. It is a finite total jump-rate density only when the relevant mark restriction has finite $\ell$-mass.
\par}
\end{proposition}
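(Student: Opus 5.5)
The plan is to verify the defining property of the dual predictable projection in the sense of \cite[Theorem~II.1.8]{JacodShiryaev2003}: $\nu$ is a predictable random measure and
\[
\E\left[\int W\,d\mu\right]=\E\left[\int W\,d\nu\right]
\]
for every non-negative $\widetilde{\Pcal}:=\Pcal\otimes\Bcal(\Rp)$-measurable function $W(\omega,s,y,z)$. Here $\Pcal$ is the predictable $\sigma$-algebra of Definition~\ref{def:predictable} relative to $(\Fcal^N_t)$.

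The first step is predictability of $\nu$. By Theorem~\ref{thm:strong_existence} and Lemma~\ref{lem:predictability}, the representative $V=\Psi(Y)$ is $\Fcal^N_t$-predictable, jointly measurable, and $[0,\infty]$-valued. Hence the density $(\omega,s,y,z)\mapsto\lambda_0+\lambda_1V_s(y)$ is $\widetilde{\Pcal}$-measurable. For any non-negative $\widetilde{\Pcal}$-measurable $W$, the process $t\mapsto\int_{(0,t]\times E\times\Rp}W\,d\nu$ is then continuous and adapted, hence predictable, by Fubini applied to predictable integrands. The only point needing care is that $V$ may take the value $\infty$ on a null set. The convention $0\cdot\infty=0$ handles this, and $\E[V_s(y)]<\infty$ by \eqref{eq:V_sup_moment_bound} ensures that $\nu$ is a.s.\ $\sigma$-finite: it is finite on sets $\{z>\varepsilon\}$, since the expected mass there is at most $(\lambda_0+\lambda_1\sup\E V)\,T\,m(E)\,\ell((\varepsilon,\infty))<\infty$.

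The second step is the compensation identity. For non-negative $\widetilde{\Pcal}$-measurable $W$, set
\[
H(\omega,s,y,z,u):=W(\omega,s,y,z)\,\mathbf{1}_{\{u\le\lambda_0+\lambda_1V_s(y)\}}.
\]
This function is $\Pcal\otimes\Bcal(\Rp)\otimes\Bcal(\Rp)$-measurable, because $\{(\omega,s,y,u):u\le\lambda_0+\lambda_1V_s(y)\}$ is measurable as a sublevel set of a difference of measurable functions. By definition \eqref{eq:thinned_measure}, $\int W\,d\mu=\int H\,dN$. The Campbell-type formula \eqref{eq:campbell} with respect to the compensator $\Lambda_N$, together with Tonelli and $\int_0^\infty\mathbf{1}_{\{u\le a\}}\,du=a$, gives
\[
\E\left[\int W\,d\mu\right]=\E\left[\int W(s,y,z)\,(\lambda_0+\lambda_1V_s(y))\,ds\,m(dy)\,\ell(dz)\right]=\E\left[\int W\,d\nu\right].
\]
Both sides lie in $[0,\infty]$, and $\mu$ charges neither $\{0\}\times E\times\Rp$ (Lemma~\ref{lem:distinct_times}) nor any other time slice in a way that conflicts with this. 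By the characterization of the compensator, with uniqueness up to indistinguishability, it follows that $\nu$ is the predictable compensator of $\mu$.

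The main obstacle is the technical admissibility of $\mu$ as an integer-valued random measure in the Jacod--Shiryaev sense. One must check that $\mu$ is optional and $\widetilde{\Pcal}$-$\sigma$-finite, and that $\mu(\{t\}\times E\times\Rp)\le1$. The last property follows from Lemma~\ref{lem:distinct_times}, since $\mu\le N$ projected onto $(s,y,z)$ and $N$ has distinct jump times a.s. For $\widetilde{\Pcal}$-$\sigma$-finiteness I would exhibit the exhausting sequence $\{z>1/n\}$: $\E\,\mu(\cdot\cap\{z>1/n\})<\infty$ by the computation above, so each piece is integrable. With these facts in place, the final sentence of the proposition is immediate. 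The total accepted rate density on $B\subseteq\Rp$ equals $\Lambda_s(y)\,\ell(B)$, which is finite exactly when $\ell(B)<\infty$.
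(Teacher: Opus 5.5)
Your route is essentially the paper's. You apply \eqref{eq:campbell} to the thinned integrand $W(s,y,z)\mathbf{1}_{\{u\leq\lambda_0+\lambda_1V_s(y)\}}$, integrate out $u$, localize the marks to get predictable $\sigma$-finiteness, and conclude with \cite[Theorem~II.1.8]{JacodShiryaev2003}. The only difference is cosmetic: you prove the compensation identity directly for every non-negative $\Pcal\otimes\Bcal(\Rp)$-measurable $W$. The paper instead first restricts to integrands supported on $(0,T]\times E\times(1/n,n]$ and then uses monotone convergence. Since \eqref{eq:campbell} holds in $[0,\infty]$, your direct version is fine. Your exhaustion $\{z>1/n\}$ also works as well as the paper's $(1/n,n]$, because $\ell((\varepsilon,\infty))\leq\overline z/\varepsilon$.

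There is one step you announce but never carry out: optionality of $\mu$. It is a hypothesis of \cite[Theorem~II.1.8]{JacodShiryaev2003}, which is stated for optional, $\Pcal\otimes\Bcal(\Rp)$-$\sigma$-finite random measures. Your proposal verifies only the $\sigma$-finiteness and the at-most-one-atom-per-time property. The gap is easy to fill in either of two ways.

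\emph{As in the paper.} For Borel $A\subseteq E\times(1/n,n]$, the counting process $t\mapsto\mu((0,t]\times A)$ is adapted, being a functional of $N$ on $(0,t]$ and of the restriction of the predictable field $V$ to $[0,t]$. It is almost surely finite by your expectation bound, and it is c\`adl\`ag, hence optional. The exhaustion in $n$ then gives optionality of $\mu$.

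\emph{By thinning.} $N$ is itself an optional integer-valued random measure with compensator $\Lambda_N$; this is presupposed by \eqref{eq:campbell}. For any non-negative optional $W$ on $\Omega\times[0,T]\times E\times\Rp$, the function $W\,\mathbf{1}_{\{u\leq\lambda_0+\lambda_1V_s(y)\}}$ is optional on the extended space, because the indicator is predictable. Hence $t\mapsto\int_{(0,t]}W\,d\mu$, which equals the $N$-integral of that function over $(0,t]$, is optional.

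With either argument added, your proof is complete.
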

\begin{proof}
  Since $V$ is predictable and non-negative, $\nu$ is predictable. Now localize the mark space by
  \[
  C_n:=E\times(1/n,n],\qquad n\in\N.
  \]
  These sets increase to $E\times(0,\infty)$, and from \eqref{eq:campbell} and \eqref{eq:V_sup_moment_bound}, we find that
  \[
  \E[\mu((0,T]\times C_n)]
  =\E[\nu((0,T]\times C_n)]
  \leq(\lambda_0+\lambda_1M)T\,m(E)\,\ell((1/n,n])<\infty,
  \]
  where $M:=\sup_{(s,y)}\E[V_s(y)]$.

  If $A\subseteq C_n$ is Borel, then the process $t\mapsto\mu((0,t]\times A)$ is adapted and has finite total count almost surely by the preceding expectation bound. As the cumulative mass of a finite point measure on $(0,T]$, it is c\`adl\`ag and hence optional (see \cite[Chapter~II.1.a]{JacodShiryaev2003}). Thus every localized measure $\mathbf1_{C_n}\mu$ is optional and the increasing exhaustion also establishes that the full measure $\mu$ is optional and $\sigma$-finite on $E$, as required in the following. 

  Now let $H$ be a non-negative $\Pcal\otimes\Bcal(\Rp)$-measurable integrand supported on $(0,T]\times C_n$. Equation~\eqref{eq:campbell}, Tonelli's theorem, and the identity $\int_0^\infty\mathbf1_{\{u\leq a\}}\,du=a$ yield
  \[
  \begin{aligned}
  \E\left[\int_{(0,T]\times E\times\Rp} H\,d\mu\right] &= \int_{(0,T]\times E\times\Rp}\E\left[H(s,y,z)\left(\lambda_0+\lambda_1V_s(y)\right)\right]ds\,m(dy)\,\ell(dz) \\
  &= \E\left[\int_{(0,T]\times E\times\Rp} H\,d\nu\right].
  \end{aligned}
  \]
  Monotone convergence extends the identity to arbitrary non-negative predictable $H$ and the finite expectations on the deterministic exhaustion $(C_n)$ give predictable $\sigma$-integrability for both measures. The compensator characterization and uniqueness in \cite[Theorem~II.1.8]{JacodShiryaev2003} now identify $\nu$ as the predictable compensator of $\mu$.
\end{proof}

\begin{corollary}\label{cor:activity_dichotomy}
  Let $I\subseteq(0,T]$ and $B\in\Ecal$ be Borel sets and set
  \[
  \xi := \int_I\int_B\left(\lambda_0+\lambda_1V_s(y)\right)m(dy)\,ds,
  \]
  which is almost surely finite by \eqref{eq:V_sup_moment_bound}. Then $\mu(I\times B\times\Rp)=0$ almost surely on $\lbrace\xi=0\rbrace$. If $\ell((0,\infty))<\infty$, then $\mu(I\times B\times\Rp)<\infty$ almost surely and if $\ell((0,\infty))=\infty$, then $\mu(I\times B\times\Rp)=\infty$ almost surely on $\lbrace\xi>0\rbrace$.
\end{corollary}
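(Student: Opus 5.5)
The plan is to reduce all three claims to statements about the Poisson random measure $N$ restricted to the random region
\[
R:=\{(s,y,z,u): s\in I,\ y\in B,\ u\le \lambda_0+\lambda_1V_s(y)\},
\]
so that $\mu(I\times B\times A)=N(R\cap\{z\in A\})$. Finiteness of $\xi$ is immediate: Tonelli and \eqref{eq:V_sup_moment_bound} give $\E[\xi]\le(\lambda_0+\lambda_1M)T\,m(E)<\infty$.

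\textbf{First claim.} I apply the compensator identity of Proposition~\ref{prop:affine_compensator} to the predictable integrand $H(s,y,z)=\mathbf 1_I(s)\mathbf 1_B(y)\mathbf 1_{\{\xi_s=0\}}$. Here $\xi_s:=\int_{I\cap(0,s]}\int_B(\lambda_0+\lambda_1V_r(y))\,m(dy)\,dr$ is continuous and adapted, hence predictable. Then
\[
\E\left[\int H\,d\mu\right]=\E\left[\int_I\int_B \mathbf 1_{\{\xi_s=0\}}(\lambda_0+\lambda_1V_s(y))\,m(dy)\,ds\,\ell(\Rp)\right].
\]
The inner $ds\,m(dy)$-integral vanishes, because $\xi_\cdot$ does not increase on $\{\xi_s=0\}$. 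If $\ell$ has infinite mass, I localize to marks in $(1/n,n]$ first and then let $n\to\infty$. Hence $\mu$ charges no points of $I\times B$ at times $s$ with $\xi_s=0$. On $\{\xi=0\}$ every $s\in I$ has $\xi_s=0$, which gives the claim.

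\textbf{Second claim ($\ell$ finite).} I take $H=\mathbf 1_{I\times B}$, which is supported on the localization sets, with $\ell(\Rp)<\infty$. This gives
\[
\E[\mu(I\times B\times\Rp)]=\ell(\Rp)\,\E[\xi]<\infty,
\]
so $\mu(I\times B\times\Rp)<\infty$ almost surely.

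\textbf{Third claim ($\ell$ infinite).} This is the main obstacle, since a first-moment argument cannot produce infinity. My approach is a conditional Poisson/exponential-martingale argument on each mark band. Fix disjoint bands $A_k=(1/(k+1),1/k]$ and set $N_k:=\mu(I\times B\times A_k)$. The measure $\mu(\cdot\times A_k)$ on $(0,T]$ is a simple counting process whose compensator is $\ell(A_k)\,d\xi_s$; it is simple by Lemma~\ref{lem:distinct_times}. By the Watanabe/time-change characterization, a simple point process with continuous compensator $\ell(A_k)\xi_t$ becomes a standard Poisson process when time-changed by its compensator. The key extra ingredient is a multivariate version: for the disjoint bands jointly, the multivariate counting process with continuous compensators $\ell(A_k)\xi$ has no common jumps, so Meyer's multivariate theorem (Jacod--Shiryaev) yields independent unit Poisson processes $P_k$ with $N_k=P_k(\ell(A_k)\xi)$.

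On $\{\xi>0\}$ we have $\sum_k\ell(A_k)\xi=\ell((0,1])\xi=\infty$, since $\ell((1,\infty))<\infty$ forces $\ell((0,1])=\infty$. Fix $\delta>0$ and work on $\{\xi\ge\delta\}$. There $N_k\ge P_k(\ell(A_k)\delta)$, and these variables are independent with $\sum_k\Prob(P_k(\ell(A_k)\delta)\ge1)=\sum_k(1-e^{-\ell(A_k)\delta})=\infty$. The second Borel--Cantelli lemma then gives $\sum_k N_k=\infty$ almost surely on $\{\xi\ge\delta\}$. Letting $\delta\downarrow0$ along a sequence covers $\{\xi>0\}$.

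Before invoking Meyer's theorem I need two technical facts, which I would verify carefully. The compensator $\xi$ must be continuous, which follows from the $ds$-density. The bands must be treated as a multivariate point process in the filtration $(\Fcal_t^N)$, which follows from Proposition~\ref{prop:affine_compensator} applied to $H=\mathbf 1_{I\times B\times A_k}$. Since the time change extends $P_k$ beyond $\xi$ only on an enlarged space, I would construct the $P_k$ on a product extension, which does not affect the almost-sure statements.
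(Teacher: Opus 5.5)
Your proposal is correct. The second claim is argued exactly as in the paper, but the first and third claims take different routes.

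\textbf{First claim.} You integrate the predictable indicator $\mathbf 1_{\{\xi_s=0\}}$ of the flat part of $s\mapsto\xi_s$ directly. This works because a non-decreasing, left-continuous, adapted $\xi_\cdot$ started at $0$ puts no $d\xi$-mass on $\{s:\xi_s=0\}$. The mark localization is not even needed, since Tonelli gives $\int_{\Rp}0\,\ell(dz)=0$. The paper instead stops at $\tau_\delta=\inf\{t:\xi_t\ge\delta\}$ and obtains $\E[\mu(I\times B\times(1/j,\infty))\mathbf 1_{\{\xi=0\}}]\le\ell((1/j,\infty))\,\delta$. It then lets $\delta\downarrow0$ and $j\to\infty$. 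Your version is shorter.

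\textbf{Third claim.} Your remark that a first-moment argument cannot produce infinity is not accurate; the paper does exactly that after stopping. Set $C_t:=\mu((I\cap(0,t])\times B\times\Rp)$ and $\rho_n:=\inf\{t:C_t\ge n\}$. Since $C_s\le n-1$ for $s<\rho_n$, and Lemma~\ref{lem:distinct_times} allows at most one atom at time $\rho_n$, the stopped count is at most $n$. The compensator identity with marks restricted to $(1/j,\infty)$ then gives $\ell((1/j,\infty))\,\E[\xi_{\rho_n\wedge T}]\le n$ for every $j$. Since $\ell((1/j,\infty))\uparrow\infty$, this forces $\xi_{\rho_n\wedge T}=0$ almost surely. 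On $\{C_T<n\}$ one has $\rho_n=\infty$, so $\Prob(C_T<n,\ \xi>0)=0$ for every $n$. This argument stays on the original probability space and uses only the compensator identity.

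Your route, via the multivariate random time change and the second Borel--Cantelli lemma, is also valid. The band counts are finite, have continuous compensators $\ell(A_k)\xi_t$, and have no common jumps. Moreover $\sum_k(1-e^{-\ell(A_k)\delta})=\infty$, because $1-e^{-x}\ge(1-e^{-1})(x\wedge1)$ and $\sum_k\ell(A_k)=\infty$. The cost is that it imports a considerably heavier theorem, and you need it in a specific form:
\begin{enumerate}
\item[(a)] the version allowing finite terminal compensators, hence the enlargement of the probability space;
\item[(b)] the version for countably many components. Independence of the countable family follows from the finite subfamilies once every extension uses one fixed independent sequence of unit Poisson processes.
\end{enumerate}
In return you get more structure than the dichotomy requires: the band counts are represented as $P_k(\ell(A_k)\xi)$ with independent unit Poisson processes $P_k$.
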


\begin{remark}\label{rem:hawkes_random_measure}
  Suppose that $\lambda_1>0$. By the definition \eqref{eq:thinned_measure} of $\mu$, the stochastic Volterra equation \eqref{eq:strong_solution} states that, for every $(t,y)\in[0,T]\times E$, almost surely
  \[
  \lambda_0+\lambda_1V_t(y)
  =\lambda_0+\lambda_1\phi_t(y)
  +\int_{(0,t)\times E\times\Rp}\lambda_1K(t-s,y,w)\,z\,\mu(ds,dw,dz).
  \]
  Together with Proposition~\ref{prop:affine_compensator}, it identifies $\mu$ as a \emph{marked Hawkes random measure} on $E$. Its compensator is $\Lambda_s(y)\,ds\,m(dy)\,\ell(dz)$, its baseline is $\lambda_0+\lambda_1\phi$, and its excitation kernel is $\lambda_1K(t-s,y,w)z$. When $\ell$ has infinite mass, $\Lambda$ describes activity relative to the mark measure. The objects studied below are the propagated field $V$, equivalently, the affine rescaling of $\Lambda$, and its transform theory.
\end{remark}

\begin{proposition}\label{prop:pathwise_identity}
  Almost surely, the identity
  \begin{equation}\label{eq:pathwise_identity}
  V_t(x) = \phi_t(x) + \int_{(0,t)\times E\times\Rp}K(t-s,x,y)\,z\,\mu(ds,dy,dz),
  \end{equation}
  holds simultaneously for all $(t,x)\in[0,T]\times E$ for the canonical representative, with the equality understood in $[0,\infty]$.
\end{proposition}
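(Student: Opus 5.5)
The identity is essentially the definition of $V$ rewritten. The only subtlety is that $V=\Psi(Y)$ is built from the representative $Y$, whereas $\mu$ is built from $V$. My plan is to show that the two acceptance indicators agree at every atom of $N$, simultaneously and almost surely.

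First I unfold the definition. For every $\omega$ and every $(t,x)$, the canonical representative satisfies pathwise
\[
V_t(x)=\phi_t(x)+\int_{(0,t)\times E\times\Rp\times\Rp}K(t-s,x,y)\,z\,\mathbf 1_{\{u\le\lambda_0+\lambda_1Y_s(y)\}}\,N(ds,dy,dz,du).
\]
The right-hand side of \eqref{eq:pathwise_identity}, by \eqref{eq:thinned_measure} and integration of non-negative functions against a counting measure (Tonelli), equals
\[
\phi_t(x)+\int_{(0,t)\times E\times\Rp\times\Rp}K(t-s,x,y)\,z\,\mathbf 1_{\{u\le\lambda_0+\lambda_1V_s(y)\}}\,N(ds,dy,dz,du).
\]
Both sides are pathwise integrals in $[0,\infty]$, so no finiteness issue arises. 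It therefore suffices to find one null set outside of which
\[
\mathbf 1_{\{u\le\lambda_0+\lambda_1Y_s(y)\}}=\mathbf 1_{\{u\le\lambda_0+\lambda_1V_s(y)\}}
\]
for $N$-almost every atom $(s,y,z,u)$ with $s\in(0,T]$. Then the two integrands coincide $N(\omega)$-a.e. for all $(t,x)$ at once.

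Next I control the discrepancy set by taking the non-negative predictable integrand
\[
H(s,y,z,u)=z\,\bigl|\mathbf 1_{\{u\le\lambda_0+\lambda_1Y_s(y)\}}-\mathbf 1_{\{u\le\lambda_0+\lambda_1V_s(y)\}}\bigr|.
\]
It is $\Pcal\otimes\Bcal(\Rp)\otimes\Bcal(\Rp)$-measurable because $Y$ and $V$ are predictable, the latter by Lemma~\ref{lem:predictability}. By \eqref{eq:campbell} and $\int_0^\infty|\mathbf 1_{\{u\le a\}}-\mathbf 1_{\{u\le b\}}|\,du=|a-b|$,
\[
\E\Big[\int H\,dN\Big]=\overline z\lambda_1\int_0^T\!\!\int_E\E\bigl[|Y_s(y)-V_s(y)|\bigr]\,m(dy)\,ds=0,
\]
since $V=Y$ holds $\Prob\otimes dt\otimes m$-a.e. by Lemma~\ref{lem:pointwise_reconstruction}. (If $V$ takes the value $\infty$ on a null set, I would take the expectation on $\{|Y-V|\}$ via the convention $0\cdot\infty=0$, or note that the indicator difference is at most $\mathbf 1_{\{u>\lambda_0+\lambda_1\min(Y,V)\}}\mathbf 1_{\{Y\neq V\}}$ and use Tonelli on $\{Y\ne V\}$, which is $\Prob\otimes ds\otimes m$-null.) Hence $\int H\,dN=0$ almost surely. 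Since $z>0$ on the support of $\ell$, the atoms of $N$ carry $z>0$ almost surely, so the indicators agree at every atom outside a single null set. This holds simultaneously for all $(t,x)$, because the null set does not depend on $(t,x)$.

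The main obstacle I expect is the measure-theoretic bookkeeping, namely making sure the single exceptional set is chosen independently of $(t,x)$. This is exactly why I integrate over the whole domain $(0,T]\times E\times\Rp\times\Rp$ rather than proving the identity pointwise as in Lemma~\ref{lem:pointwise_reconstruction}. A secondary point is the convention for possibly infinite values of $V$, which I handle with the indicator set $\{Y\ne V\}$ as above.
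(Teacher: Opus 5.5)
Your proof is correct and follows the paper's argument. Both show, via \eqref{eq:campbell} and $V=Y$ holding $\Prob\otimes dt\otimes m$-a.e., that the set $D$ where the thinning indicators built from $Y$ and from $V$ disagree is almost surely $N$-null, so the accepted-jump measures coincide on one event and the identity holds simultaneously for all $(t,x)$. The only difference is technical: you weight the discrepancy by $z$ and use $\overline z<\infty$, whereas the paper restricts to the sets $D_n=D\cap\{n^{-1}<z\le n,\ v\le n\}$ of finite $\ell\otimes du$-mass and counts atoms; both handle infinite activity equally well.
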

\begin{proof}
  Let $Y$ denote the fixed-point representative from the proof of Theorem~\ref{thm:strong_existence}, so that $V=\Psi(Y)$ pathwise; that is, for every $(\omega,r,y)$,
  \begin{equation}\label{eq:V_pathwise_Y}
  V_r(y) = \phi_r(y) + \int_{(0,r)\times E\times\Rp\times\Rp}K(r-u,y,w)\,z\,\mathbf{1}_{\lbrace v\leq\lambda_0+\lambda_1Y_u(w)\rbrace}\,N(du,dw,dz,dv).
  \end{equation}
  Consider the predictable set
  \[
  D:=\left\{(\omega,u,w,z,v):
  \mathbf{1}_{\{v\leq\lambda_0+\lambda_1V_u(w)\}}
  \neq
  \mathbf{1}_{\{v\leq\lambda_0+\lambda_1Y_u(w)\}}
  \right\}.
  \]
  Infinite activity of $\ell$ prevents us from integrating out the $z$-coordinate directly, so for $n\in\N$, set
  \[
  D_n:=D\cap\left\{n^{-1}<z\leq n,\ v\leq n\right\}.
  \]
  Equation~\ref{eq:campbell} and the identity
  $\int_0^n|\mathbf{1}_{\{v\leq a\}}-\mathbf{1}_{\{v\leq b\}}|\,dv\leq |a-b|$
  yield
  \[
  \E[N(D_n)]
  \leq \lambda_1\ell((n^{-1},n])
  \E\left[\int_0^T\int_E|V_u(w)-Y_u(w)|\,m(dw)\,du\right]=0,
  \]
  because $V=Y$ holds $\Prob\otimes dt\otimes m$-almost everywhere. Thus $\mathbb{P}(N(D_n)=0)=1$ for every $n$. Since $D=\bigcup_{n\geq1}D_n$ up to the set $\{z=0\}$, which is not charged by $N$, we have $N(D)=0$ almost surely. On this single event, the two thinning indicators agree $N$-almost everywhere, so the accepted-jump measures built from $V$ and from $Y$ coincide as measures on $(0,T]\times E\times\Rp$. Hence, for every non-negative Borel function $g$ on $(0,T]\times E\times\Rp$, it holds that
  \[
  \int_{(0,T]\times E\times\Rp}g\,d\mu = \int_{(0,T]\times E\times\Rp\times\Rp}g(u,w,z)\,\mathbf{1}_{\lbrace v\leq\lambda_0+\lambda_1Y_u(w)\rbrace}\,N(du,dw,dz,dv).
  \]
  Applying this with $g(u,w,z)=\mathbf{1}_{\lbrace u<r\rbrace}K(r-u,y,w)\,z$ for each $(r,y)$ and comparing with \eqref{eq:V_pathwise_Y} yields \eqref{eq:pathwise_identity} simultaneously for all $(r,y)$.
\end{proof}

For a predictable integrand $H$ for which the $\mu$- and $\nu$-integrals of $|H|$ over $(0,t]\times E\times\Rp$ are almost surely finite, we use the standard compensated-integral notation
\[
H\star(\mu-\nu)_t
:=\int_{(0,t]\times E\times\Rp}H(s,y,z)\,\mu(ds,dy,dz)
-\int_{(0,t]\times E\times\Rp}H(s,y,z)\,\nu(ds,dy,dz).
\]

\begin{corollary}\label{cor:drift_martingale}
  For every $(t,x)\in[0,T]\times E$ it holds almost surely that
  \begin{equation}\label{eq:drift_martingale_form}
  \begin{aligned}
  V_t(x)={}&\phi_t(x)
  +\overline{z}\int_0^t\int_E K(t-s,x,y)
  \left(\lambda_0+\lambda_1V_s(y)\right)m(dy)\,ds\\
  &+H_{t,x}\star(\mu-\nu)_t,
  \end{aligned}
  \end{equation}
  where $H_{t,x}(s,y,z):=\mathbf 1_{\{s<t\}}K(t-s,x,y)z$. The two finite-variation integrals defining $H_{t,x}\star(\mu-\nu)_t$ are almost surely finite. For fixed $(t,x)$ the compensated integral has mean zero, but the map $t\mapsto H_{t,x}\star(\mu-\nu)_t$ need not be a martingale because the integrand itself depends on the terminal time $t$.
\end{corollary}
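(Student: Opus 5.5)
The plan is to start from the pathwise identity of Proposition~\ref{prop:pathwise_identity}. Fix a deterministic $(t,x)$. That proposition gives, almost surely,
\[
V_t(x)=\phi_t(x)+\int_{(0,t)\times E\times\Rp}H_{t,x}\,d\mu ,
\]
where $H_{t,x}(s,y,z)=\mathbf 1_{\{s<t\}}K(t-s,x,y)z$. The integrand $H_{t,x}$ is deterministic and non-negative, hence predictable. The strict-past indicator together with the convention $K(0,x,y)=0$ means that integrating over $(0,t]$ or over $(0,t)$ gives the same value.

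The first step is to integrate $H_{t,x}$ against the compensator of Proposition~\ref{prop:affine_compensator}. Performing the $\ell(dz)$-integration first gives
\[
\int_{(0,t]\times E\times\Rp} H_{t,x}\,d\nu=\overline z\int_0^t\int_E K(t-s,x,y)\bigl(\lambda_0+\lambda_1V_s(y)\bigr)m(dy)\,ds ,
\]
which is exactly the drift term in \eqref{eq:drift_martingale_form}. Tonelli applies because everything is non-negative.

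The second step is to show that both integrals are almost surely finite. Since $H_{t,x}$ is a non-negative predictable integrand, the compensator property yields $\E[\int H_{t,x}\,d\mu]=\E[\int H_{t,x}\,d\nu]$. Using $\E[V_s(y)]\le M:=\sup\E[V]<\infty$ from \eqref{eq:V_sup_moment_bound} and admissibility, this common value is bounded by
\[
\overline z(\lambda_0+\lambda_1M)\int_0^T k(r)\,dr<\infty .
\]
Both integrals therefore have finite expectation and are finite almost surely.

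Once finiteness is known, the difference $H_{t,x}\star(\mu-\nu)_t$ is well defined. Adding and subtracting the $\nu$-integral in the pathwise identity gives \eqref{eq:drift_martingale_form} almost surely. The mean-zero claim follows directly from the equality of the two finite expectations.

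The only delicate point is the remark that $t\mapsto H_{t,x}\star(\mu-\nu)_t$ need not be a martingale. The plan is to justify this by observing that the integrand changes with $t$, so the family is not of the form $r\mapsto H\star(\mu-\nu)_r$ for a single fixed integrand $H$. At that point I will briefly note that for a fixed $t$, the process $r\mapsto (\mathbf 1_{\{s<t\}}K(t-s,x,y)z)\star(\mu-\nu)_r$ is a genuine martingale by integrability. No further proof is needed there, since the claim is a cautionary statement rather than an assertion. I do not expect any real obstacle; the main care is just keeping the order of integration and the strict-past convention consistent.
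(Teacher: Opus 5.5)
Your proposal is correct and is essentially the paper's proof. You write the equation as a $\mu$-integral of the deterministic integrand $H_{t,x}$, compute its $\nu$-integral by integrating out $z$, use the compensator identity with \eqref{eq:V_sup_moment_bound} and admissibility to get finite expectations (hence almost-sure finiteness and mean zero), and subtract. The only cosmetic difference is that the paper reads the $\mu$-representation directly off \eqref{eq:strong_solution} at the fixed $(t,x)$ via the definition of $\mu$, whereas you invoke Proposition~\ref{prop:pathwise_identity}; both work.
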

\begin{proof}
{\emergencystretch=3em
  By the definition \eqref{eq:thinned_measure} of $\mu$, the stochastic Volterra equation \eqref{eq:strong_solution} reads $V_t(x) = \phi_t(x) + \int_{(0,t)\times E\times\Rp}K(t-s,x,y)\,z\,\mu(ds,dy,dz)$ almost surely. Applying the compensator identity from the proof of Proposition~\ref{prop:affine_compensator} to the deterministic integrand $(s,y,z)\mapsto \mathbf{1}_{(0,t)}(s)K(t-s,x,y)\,z$ gives
\par}
  \[
  \E\left[\int_{(0,t)\times E\times\Rp}K(t-s,x,y)\,z\,\mu(ds,dy,dz)\right] = \overline{z}\int_0^t\int_E K(t-s,x,y)\left(\lambda_0+\lambda_1\E\left[V_s(y)\right]\right)m(dy)\,ds,
  \]
  which is finite by \eqref{eq:V_sup_moment_bound} and the admissibility of $K$. Hence the $\mu$- and $\nu$-integrals of $\mathbf{1}_{(0,t)}(s)K(t-s,x,y)\,z$ are both almost surely finite, and subtracting the pathwise identity 
  \[
  \int_{(0,t)\times E\times\Rp}K(t-s,x,y)\,z\,\nu(ds,dy,dz)=\overline{z}\int_0^t\int_EK(t-s,x,y)(\lambda_0+\lambda_1V_s(y))\,m(dy)\,ds,
  \]
  from \eqref{eq:strong_solution} yields \eqref{eq:drift_martingale_form}.
\end{proof}

\begin{remark}\label{rem:martingale_measure}
  If, in addition to the already imposed regularity, it holds that $\overline{z}_2:=\int_0^\infty z^2\,\ell(dz)<\infty$, then
  \[
  \E\left[\int_0^T\int_E\int_0^\infty z^2\,\nu(ds,dy,dz)\right]
  \leq \overline{z}_2(\lambda_0+\lambda_1M)T\,m(E)<\infty,
  \]
  where $M:=\sup_{(s,y)\in[0,T]\times E}\E[V_s(y)]<\infty$.
  For measurable $A\in\Ecal$, define the process
  \[
  \mathcal{M}_t(A):=\bigl(z\mathbf 1_A(y)\bigr)\star(\mu-\nu)_t.
  \]
  Then $t\mapsto\mathcal{M}_t(A)$ is a square-integrable martingale and compensated jumps on disjoint spatial sets have zero predictable covariation. Thus $\mathcal{M}$ is an orthogonal martingale measure in the sense of \cite{Walsh1986}, with predictable covariance measure
  \[
  Q_{\mathcal M}(ds,dy)
  =\overline{z}_2\left(\lambda_0+\lambda_1V_s(y)\right)ds\,m(dy).
  \]
  Equation~\eqref{eq:drift_martingale_form} can therefore be read as a stochastic Volterra equation driven by $\mathcal{M}$. In particular, the martingale-measure representation gives an It\^o isometry for square-integrable integrands. We refer to \cite{Walsh1986,ChongKluppelberg2015} for the associated integration theory.
\end{remark}

\begin{example}[Ambit fields]\label{ex:ambit_field}
  If $\lambda_1=0$, then $\nu(ds,dy,dz) = \lambda_0\,ds\,m(dy)\,\ell(dz)$ is deterministic, and $\mu$ is itself a Poisson random measure. In this case \eqref{eq:strong_solution} is a convolution of ambit or shot-noise type driven by a non-negative finite-variation L\'evy basis, cf. \cite{Ambit}. This does not imply finite-variation temporal paths of $V$, since neither $K$ nor $\phi$ is assumed to have bounded variation. It is a subclass of L\'evy-driven ambit fields, since the present framework excludes Gaussian components, signed jumps, and infinite-variation L\'evy bases. More precisely, if $K(r,x,y)$ is the transition density of a suitable strongly continuous Markov semigroup $(P_r)_{r\geq0}$ and $\phi_t=P_t\phi_0$, then the equation has the standard stochastic-convolution form of a mild evolution equation with Poisson-type additive noise, subject to the usual integrability conditions; see \cite{PeszatZabczyk2007} for the general evolution-equation setting.
\end{example}

\begin{example}[Affine Volterra jump processes]\label{ex:affine_pure_jump}
  Let $E=\lbrace e\rbrace$ be a singleton with $m=\delta_e$ (recall that $m$ is not required to be atomless) and $K(r,e,e)=K(r)$ with $K\in L^1([0,T])$. Then \eqref{eq:strong_solution} reduces to the scalar equation
  \[
  V_t = \phi_t + \int_{(0,t)\times\Rp\times\Rp}K(t-s)\,z\,\mathbf{1}_{\lbrace u\leq \lambda_0+\lambda_1V_s\rbrace}\,N(ds,dz,du),
  \]
  an affine Volterra jump process with affine activity density in the spirit of \cite{BondiLivieriPulido2024}, and Proposition~\ref{prop:affine_compensator} recovers the compensator $(\lambda_0+\lambda_1V_s)\,ds\,\ell(dz)$. For $K\equiv1$ and $\phi\equiv v_0\geq0$, the closed-interval version -- obtained by integrating over $(0,t]$ -- is a classical Markovian affine pure-jump process and agrees with $V_t$ almost surely at each deterministic time. Taking $E$ finite with counting measure yields multivariate Hawkes-type systems with Volterra kernels; see Section~\ref{subsec:hawkes} for an example of such a construction.
\end{example}

\begin{example}[$V$ as an $L^2$-valued process]\label{ex:L2_process}
  Suppose that $\sup_{(t,x)\in [0,T]\times E}\mathbb{E}\left[V_t(x)^p\right]<\infty$ for some $p\geq 2$ (for example from the assumptions of Proposition~\ref{prop:p_moments}). Since $m(E)<\infty$, it follows that
  \[
  \sup_{t\in[0,T]}\E\left[\lVert V_t(\cdot)\rVert_{L^2(E,m)}^2\right]
  \leq m(E)\times\sup_{(t,x)\in[0,T]\times E}\E[V_t(x)^2]<\infty,
  \]
  and thus that $\mathbb{E}\left[\int_0^T\lVert V_t(\cdot)\rVert_{L^2(E,m)}^2dt\right]<\infty$. Joint measurability of $V$ and separability of $L^2(E,m)$ imply that $(\omega,t)\mapsto[V_t(\cdot;\omega)]_{L^2(E,m)}$ has a strongly/Bochner measurable $L^2(E,m)$-valued representative outside a $\Prob\otimes dt$-null set. We define this representative to be zero on that set. For every fixed $t$, the uniform moment bound also shows that $V_t(\cdot)\in L^2(E,m)$ almost surely. This connects the field construction to Hilbert space-valued L\'evy models such as \cite{BenthKruhner2023,BenthSgarra2024}. The assertion is only an $L^2$-valued measurability statement up to product-null sets and gives neither temporal nor spatial path regularity, and in particular point evaluation $\delta_xV_t(\cdot)=V_t(x)$ need not be continuous. We refer to Section~\ref{sec:regularity} for conditions that ensure some spatial regularity.
\end{example}

We next derive a deterministic Volterra equation for the first moment of $V$. For this, define the space-time convolution powers of the kernel $K$ by $K^{\star 1}:=K$ and
\begin{equation}\label{eq:convolution_powers}
K^{\star (n+1)}(r,x,y) := \int_0^r\int_E K(r-s,x,w)\,K^{\star n}(s,w,y)\,m(dw)\,ds, \qquad n\in\N.
\end{equation}
Similarly, we write $k^{\star n}$ for the $n$-fold convolution power of the scalar function $k$ on $[0,T]$. An induction based on Tonelli's theorem shows that, for all $(r,x)\in[0,T]\times E$ and every $\eta\geq 0$,
\begin{equation}\label{eq:convolution_estimates}
\int_E K^{\star n}(r,x,y)\,m(dy)\leq k^{\star n}(r) \qquad\text{and}\qquad \int_0^Te^{-\eta r}k^{\star n}(r)\,dr \leq \left(\int_0^Te^{-\eta r}k(r)\,dr\right)^{n}.
\end{equation}

\begin{proposition}\label{prop:first_moment}
  The function $f(t,x):=\E\left[V_t(x)\right]$ is measurable and bounded on $[0,T]\times E$, and it is the unique bounded measurable solution of the linear Volterra equation
  \begin{equation}\label{eq:first_moment_equation}
  f(t,x) = \phi_t(x) + \overline{z}\int_0^t\int_E K(t-s,x,y)\left(\lambda_0+\lambda_1 f(s,y)\right)m(dy)\,ds, \qquad (t,x)\in [0,T]\times E.
  \end{equation}
{\emergencystretch=3em
  Moreover, the resolvent series $R:=\sum_{n=1}^{\infty}(\overline{z}\lambda_1)^nK^{\star n}$ converges pointwise in $[0,\infty]$ and satisfies $\int_E R(r,x,y)\,m(dy)\leq \rho(r)$ for all $(r,x)$, where the function $\rho:=\sum_{n=1}^\infty (\overline{z}\lambda_1)^nk^{\star n}$ satisfies $\int_0^Te^{-\eta r}\rho(r)\,dr<\infty$ whenever $\overline{z}\lambda_1\int_0^Te^{-\eta r}k(r)\,dr<1$. The function $f$ admits the representation
\par}
  \begin{equation}\label{eq:first_moment_resolvent}
  f(t,x) = g(t,x) + \int_0^t\int_E R(t-s,x,y)\,g(s,y)\,m(dy)\,ds,
  \end{equation}
  where $g(t,x):=\phi_t(x)+\overline{z}\lambda_0\int_0^t\int_EK(t-s,x,y)\,m(dy)\,ds$.
\end{proposition}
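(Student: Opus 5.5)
Measurability of $f$ follows from joint measurability of the predictable field $V$ (Lemma~\ref{lem:predictability}) and Tonelli's theorem, and boundedness is exactly \eqref{eq:V_sup_moment_bound} from Theorem~\ref{thm:strong_existence}. To obtain \eqref{eq:first_moment_equation}, fix $(t,x)$ and take expectations in \eqref{eq:strong_solution}. Applying \eqref{eq:campbell} to the non-negative predictable integrand $K(t-s,x,y)z\mathbf 1_{\{u\le\lambda_0+\lambda_1V_s(y)\}}$ and integrating out $u$ and $z$ (as in the proof of Lemma~\ref{lem:L1_estimate}) gives the equation with $\E[V_s(y)]$ inside the integral. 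All terms are finite because $f$ is bounded and $K$ is admissible.

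For uniqueness, let $f_1,f_2$ be two bounded measurable solutions. Their difference $h$ satisfies $|h(t,x)|\le \overline z\lambda_1\int_0^t\int_E K(t-s,x,y)|h(s,y)|\,m(dy)\,ds$. Multiplying by $e^{-\eta t}$ and taking the supremum gives $\sup e^{-\eta t}|h|\le \overline z\lambda_1 J_\eta(T)\sup e^{-\eta t}|h|$. Choosing $\eta$ with $\overline z\lambda_1J_\eta(T)<1$, as in Theorem~\ref{thm:Psi_contraction}, forces $h\equiv 0$. This is the deterministic analogue of the contraction argument, with a genuine supremum instead of an essential supremum, since everything is deterministic and the bound holds pointwise.

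For the resolvent, the series $R$ has non-negative terms, so it converges in $[0,\infty]$. Tonelli together with \eqref{eq:convolution_estimates} gives $\int_E R(r,x,y)\,m(dy)\le\sum_n(\overline z\lambda_1)^n k^{\star n}(r)=\rho(r)$. The second estimate in \eqref{eq:convolution_estimates} bounds $\int_0^Te^{-\eta r}\rho(r)\,dr$ by the geometric series $\sum_n q^n$ with $q=\overline z\lambda_1\int_0^Te^{-\eta r}k(r)\,dr<1$. Since $e^{-\eta r}\ge e^{-\eta T}$ on $[0,T]$, this also gives $\rho\in L^1$.

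Finally, set $\tilde f:=g+R\ast g$, where $\ast$ denotes the space-time convolution. The function $g$ is bounded and measurable, and $R\ast g$ is bounded by $\|g\|_\infty\int_0^T\rho$, so $\tilde f$ is bounded and measurable. The resolvent identity $R=\overline z\lambda_1K+\overline z\lambda_1K\ast R$ follows termwise from the definition \eqref{eq:convolution_powers}, using associativity of the space-time convolution of non-negative kernels (Tonelli). From this identity,
\[
\overline z\lambda_1K\ast\tilde f=\overline z\lambda_1K\ast g+\overline z\lambda_1K\ast R\ast g=R\ast g .
\]
Hence $\tilde f=g+\overline z\lambda_1K\ast\tilde f$, which is exactly \eqref{eq:first_moment_equation}, because $g$ already contains the $\lambda_0$-term. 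By uniqueness, $\tilde f=f$.

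The main obstacle is this last step: the convolution must be justified and rearranged with possibly infinite values of $R$. Working throughout with non-negative functions resolves it. Split $g$ into its non-negative pieces, which is unnecessary here since $g\ge0$; apply Tonelli to all interchanges; and finiteness then follows from $\rho\in L^1$.
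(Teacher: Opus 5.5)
Your proof is correct. The measurability and boundedness of $f$, the derivation of \eqref{eq:first_moment_equation} from \eqref{eq:campbell}, the weighted-supremum uniqueness argument, and the bounds on $R$ and $\rho$ are essentially the paper's argument.

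The genuine difference is in the representation \eqref{eq:first_moment_resolvent}. The paper iterates \eqref{eq:first_moment_equation} $N$ times to obtain $f=g+\sum_{n=1}^N(\overline z\lambda_1)^nK^{\star n}\star g+(\overline z\lambda_1)^{N+1}K^{\star(N+1)}\star f$. After multiplying by $e^{-\eta t}$, it bounds the remainder by $\lVert f\rVert_\infty\bigl(\overline z\lambda_1\int_0^Te^{-\eta r}k(r)\,dr\bigr)^{N+1}\to0$, and it passes to the limit in the partial sums by monotone convergence. You instead verify, via the resolvent identity $R=\overline z\lambda_1K+\overline z\lambda_1K\star R$, that $g+R\star g$ is a solution, and then invoke the uniqueness you already proved.

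Your route needs the extra a priori fact $\rho\in L^1((0,T])$, in order to place $g+R\star g$ in the bounded uniqueness class; you do supply this. In return it yields the resolvent equation as a byproduct, valid for any bounded non-negative forcing. The paper's route does not use uniqueness or the integrability of $\rho$ at this step: the non-negative decomposition already gives $R\star g\le f-g$. It also exhibits $f$ directly as the limit of the Neumann iterates.

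A minor point: the resolvent identity itself follows from \eqref{eq:convolution_powers} and monotone convergence alone. Associativity via Tonelli is what you actually need for $K\star(R\star g)=(K\star R)\star g$.
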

\begin{proof}
  Measurability of $f$ follows from the joint measurability of $V$ and Tonelli's theorem. Boundedness follows from \eqref{eq:V_sup_moment_bound} and \eqref{eq:first_moment_equation} follows by taking expectations in \eqref{eq:strong_solution} via equation~\eqref{eq:campbell}, exactly as in the proof of Theorem~\ref{thm:strong_existence}. For uniqueness, let $\widetilde{f}$ be another bounded measurable solution and set $h:=\lvert f-\widetilde{f}\rvert$. Subtracting the two equations gives the pointwise inequality 
  \[
  h(t,x)\leq \overline{z}\lambda_1\int_0^t\int_EK(t-s,x,y)\,h(s,y)\,m(dy)\,ds,
  \]
  for every $(t,x)$. It then follows that the supremum is bounded 
  \[
  \lVert h\rVert_{\eta,\infty}:=\sup_{(t,x)\in[0,T]\times E}e^{-\eta t}h(t,x)\leq \overline{z}\lambda_1 J_\eta(T)\,\lVert h\rVert_{\eta,\infty} < \infty,
  \]
  with $J_\eta(T)$ as in \eqref{eq:J_def}. Choosing $\eta$ with $\overline{z}\lambda_1J_\eta(T)<1$ yields $h\equiv 0$. For the resolvent representation, note first that all terms of $R$ are non-negative, so the pointwise convergence and the bound by $\rho$ are immediate from the estimates preceding the proposition, and 
  \[
  \int_0^Te^{-\eta r}\rho(r)\,dr\leq \sum_{n\geq1}\bigl(\overline{z}\lambda_1\int_0^Te^{-\eta r}k(r)\,dr\bigr)^n<\infty
  \]
  for $\eta$ as stated. Iterating \eqref{eq:first_moment_equation} $N$ times and an application of Tonelli's theorem to reorder the convolutions yields that
  \[
  f = g + \sum_{n=1}^{N}(\overline{z}\lambda_1)^n K^{\star n}\star g + (\overline{z}\lambda_1)^{N+1}K^{\star(N+1)}\star f,
  \]
  where $(F\star h)(t,x):=\int_0^t\int_EF(t-s,x,y)\,h(s,y)\,m(dy)\,ds$. The remainder satisfies
  \[
  e^{-\eta t}\,(\overline{z}\lambda_1)^{N+1}\bigl(K^{\star(N+1)}\star f\bigr)(t,x) \leq \lVert f\rVert_\infty \Bigl(\overline{z}\lambda_1\int_0^Te^{-\eta r}k(r)\,dr\Bigr)^{N+1} \xrightarrow[N\to\infty]{} 0
  \]
  for every fixed $(t,x)$, while the partial sums increase to $R\star g$ by monotone convergence. This proves \eqref{eq:first_moment_resolvent}.
\end{proof}

\section{Fixed-time affine transforms}\label{sec:spatial_transform}
In this section, we derive the Laplace transform of $\langle f,V_t\rangle$ for a bounded and positive Borel function $f\in B_b(E)_+$. The pairing $\langle f,h\rangle$ for measurable $h:E\to[0,\infty]$ is defined by
\[
\langle f,h\rangle := \int_E f(x)\,h(x)\,m(dx).
\]
Since $V$ is jointly measurable and satisfies \eqref{eq:V_sup_moment_bound}, the pairing $\langle f,V_t\rangle$ is for every $t\in[0,T]$ a well-defined random variable with $\E[\langle f,V_t\rangle]\leq \lVert f\rVert_\infty\,m(E)\,M<\infty$, where here and in the remainder of the paper $M:=\sup_{(t,x)\in[0,T]\times E}\E[V_t(x)]$. Define the \emph{dual kernel} $K_f$ by
\begin{equation}\label{eq:dual_kernel}
K_f(r,y) := \int_E f(x)\,K(r,x,y)\,m(dx), \qquad (r,y)\in(0,T]\times E,
\end{equation}
which is Borel measurable, and define the \emph{L\'evy exponent}
\begin{equation}\label{eq:levy_exponent}
F(q) := \int_0^\infty\left(1-e^{-qz}\right)\ell(dz), \qquad q\in[0,\infty],
\end{equation}
where we set $F(\infty):=\ell((0,\infty))\in[0,\infty]$. By monotone convergence and the elementary bounds $1-e^{-a}\leq a$ and $\lvert e^{-qz}-e^{-q'z}\rvert\leq z\lvert q-q'\rvert$, the function $F$ is non-decreasing and continuous on $[0,\infty]$ with
\begin{equation}\label{eq:F_properties}
F(0)=0,\qquad F(q)\leq \overline{z}\,q,\qquad \lvert F(q)-F(q')\rvert\leq \overline{z}\,\lvert q-q'\rvert,\qquad q,q'\in[0,\infty).
\end{equation}

Admissibility controls the rows of $K(r,\cdot,\cdot)$, while the transform problem acts in the dual spatial direction and requires the corresponding column bound.

\begin{assumption}\label{ass:K_transform}
  The \emph{dual majorant} $\widetilde{k}(r) := \sup_{y\in E}\int_E K(r,x,y)\,m(dx)$ is Borel measurable on $(0,T]$ and satisfies $\int_0^T\widetilde{k}(r)\,dr<\infty$.
\end{assumption}

\begin{remark}\label{rem:K_transform_scope}
If $K(r,x,y)=K(r,y,x)$, then $\widetilde k=k$. The dual condition is also automatic when $E$ is finite with counting measure. In general the row and column assumptions are distinct.
\end{remark}

Define the \emph{dual resolvent}
\begin{equation}\label{eq:dual_resolvent}
\widetilde{\rho} := \widetilde{k}+\sum_{n=2}^{\infty}(\lambda_1\overline{z})^{n-1}\,\widetilde{k}^{\star n},
\end{equation}
where $\widetilde{k}^{\star n}$ is the $n$-fold scalar convolution. As in Proposition~\ref{prop:first_moment}, it holds that
\[
\int_0^Te^{-\eta r}\widetilde{k}^{\star n}(r)\,dr\leq (\widetilde{J}_\eta)^{n}, \qquad \widetilde{J}_\eta=\int_0^Te^{-\eta r}\widetilde{k}(r)\,dr,
\]
and since $\widetilde{J}_\eta\to0$ as $\eta\to\infty$, choosing $\eta$ with $\lambda_1\overline{z}\widetilde{J}_\eta<1$ yields
\begin{equation}\label{eq:dual_resolvent_L1}
\int_0^T\widetilde{\rho}(r)\,dr \leq e^{\eta T}\int_0^Te^{-\eta r}\widetilde{\rho}(r)\,dr \leq \frac{e^{\eta T}\,\widetilde{J}_\eta}{1-\lambda_1\overline{z}\widetilde{J}_\eta} < \infty.
\end{equation}
Hence $\widetilde\rho\in L^1((0,T])$ and is finite almost everywhere.

\begin{lemma}\label{lem:riccati_volterra}
  Let $f\in B_b(E)_+$. Then there exists a Borel measurable function $\psi_f:(0,T]\times E\to[0,\infty]$ satisfying the Riccati--Volterra equation
  \begin{equation}\label{eq:riccati_volterra}
  \psi_f(r,y) = K_f(r,y) + \lambda_1\int_0^r\int_E K(r-s,y',y)\,F\bigl(\psi_f(s,y')\bigr)\,m(dy')\,ds, \qquad (r,y)\in(0,T]\times E,
  \end{equation}
  together with the bound
  \begin{equation}\label{eq:riccati_bound}
  \psi_f(r,y) \leq \lVert f\rVert_\infty\,\widetilde{\rho}(r), \qquad (r,y)\in(0,T]\times E.
  \end{equation}
  It holds that $\psi_f(r,y)<\infty$ for every $y$ and every $r$ outside a deterministic Lebesgue-null set. Additionally, the function $\psi_f$ is minimal among all non-negative Borel solutions of \eqref{eq:riccati_volterra}, and it is the unique solution in the following dominated class: if $\widetilde{\psi}\geq0$ is a Borel solution of \eqref{eq:riccati_volterra} such that $\widetilde{\psi}(r,y)\leq\Theta(r)$ for all $(r,y)$ and some $\Theta\in L^1((0,T])$, then $\widetilde{\psi}=\psi_f$ everywhere on $(0,T]\times E$. If $\lambda_1=0$, then $\psi_f=K_f$.
\end{lemma}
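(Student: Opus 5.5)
I will construct $\psi_f$ by monotone Picard iteration and then handle uniqueness by a Gronwall-type contraction in a weighted sup-norm. Define the operator
\[
\mathcal{T}\psi(r,y):=K_f(r,y)+\lambda_1\int_0^r\int_E K(r-s,y',y)\,F\bigl(\psi(s,y')\bigr)\,m(dy')\,ds
\]
on non-negative Borel functions $\psi:(0,T]\times E\to[0,\infty]$. Since $F$ is non-decreasing and continuous on $[0,\infty]$ by \eqref{eq:F_properties}, $\mathcal T$ is monotone and preserves Borel measurability by Tonelli. Set $\psi^{(0)}:=0$ and $\psi^{(n+1)}:=\mathcal T\psi^{(n)}$. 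Then $\psi^{(1)}=K_f\geq\psi^{(0)}$, so induction gives an increasing sequence. Define $\psi_f:=\lim_n\psi^{(n)}$. Continuity of $F$ on $[0,\infty]$ together with monotone convergence yields $\mathcal T\psi_f=\psi_f$, i.e.\ \eqref{eq:riccati_volterra} holds in $[0,\infty]$.

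Minimality follows at once. If $\widetilde\psi\ge0$ solves \eqref{eq:riccati_volterra}, then $\psi^{(0)}\le\widetilde\psi$, and monotonicity of $\mathcal T$ gives $\psi^{(n)}\le\widetilde\psi$ for all $n$, hence $\psi_f\le\widetilde\psi$. If $\lambda_1=0$, then $\mathcal T\psi=K_f$ for all $\psi$, so $\psi_f=K_f$.

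For the bound \eqref{eq:riccati_bound}, I will show by induction that $\psi^{(n)}(r,y)\le\lVert f\rVert_\infty\widetilde\rho_n(r)$, where
\[
\widetilde\rho_n:=\widetilde k+\sum_{j=2}^{n}(\lambda_1\overline z)^{j-1}\widetilde k^{\star j}.
\]
The base case is $K_f(r,y)\le\lVert f\rVert_\infty\int_EK(r,x,y)\,m(dx)\le\lVert f\rVert_\infty\widetilde k(r)$. For the inductive step I use $F(q)\le\overline z q$ and the column bound $\int_EK(r-s,y',y)\,m(dy')\le\widetilde k(r-s)$, which apply because the inductive hypothesis is uniform in $y'$. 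This gives $\mathcal T\psi^{(n)}\le\lVert f\rVert_\infty\bigl(\widetilde k+\lambda_1\overline z\,\widetilde k\star\widetilde\rho_n\bigr)=\lVert f\rVert_\infty\widetilde\rho_{n+1}$. Passing to the limit yields \eqref{eq:riccati_bound}. Finiteness of $\psi_f(r,y)$ for all $y$ and all $r$ outside the null set $\{\widetilde\rho=\infty\}$ then follows from \eqref{eq:dual_resolvent_L1}. The null set is deterministic and independent of $y$ because the bound is uniform in $y$.

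For uniqueness in the dominated class, let $\widetilde\psi\le\Theta\in L^1$ be another solution and set $h:=\widetilde\psi-\psi_f\geq0$, using minimality. On the set where both functions are finite, the Lipschitz bound in \eqref{eq:F_properties} gives
\[
h(r,y)\le\lambda_1\overline z\int_0^r\widetilde k(r-s)\,\sup_{y'}h(s,y')\,ds.
\]
This is the main obstacle. The functions are only dominated by $L^1$ functions, not bounded, so the weighted sup-norm argument of Proposition~\ref{prop:first_moment} does not apply directly. My plan is to work with the scalar function $H(s):=\sup_{y'}h(s,y')\le\Theta(s)$. I will either verify its measurability or replace it by the measurable majorant $\Theta$ and iterate. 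The inequality becomes $H\le\lambda_1\overline z\,\widetilde k\star H$ pointwise, and iterating $n$ times gives $H\le(\lambda_1\overline z)^n\,\widetilde k^{\star n}\star\Theta$. Taking $L^1$ norms with the weight $e^{-\eta r}$ yields $\int_0^Te^{-\eta r}H(r)\,dr\le(\lambda_1\overline z\widetilde J_\eta)^n\lVert\Theta\rVert_{L^1}\to0$, so $H=0$ a.e. Feeding this back into the integral inequality gives $h(r,y)=0$ for every $(r,y)$. The subtlety is the points where $\psi_f$ or $\widetilde\psi$ is infinite. For those I will use that the right-hand side of \eqref{eq:riccati_volterra} depends only on the values on $(0,r)$ through an integral, where a.e.\ equality suffices; hence the two right-hand sides, and therefore $\widetilde\psi$ and $\psi_f$, coincide everywhere.
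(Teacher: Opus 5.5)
Your proposal is correct and follows the paper's proof essentially step by step. The steps match: monotone iteration (yours starts at $0$ rather than $K_f$, which only shifts the index since $F(0)=0$), the inductive resolvent bound, minimality by induction, and uniqueness by iterating the Lipschitz inequality from the majorant $\Theta$ with an $e^{-\eta r}$-weighted $L^1$ estimate and then noting that a.e.\ agreement makes the two right-hand sides coincide everywhere. The only caveat is that the $\sup_{y'}h$ variant needs the measurable-majorant fallback you already mention, and that fallback is exactly how the paper phrases the iteration.
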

\begin{proof}
  Denote by $\mathcal{T}$ the operator mapping a Borel function $\psi:(0,T]\times E\to[0,\infty]$ to the right-hand side of \eqref{eq:riccati_volterra}. By Tonelli's theorem and the continuity of $F$ on $[0,\infty]$, the function $\mathcal{T}\psi$ is again Borel measurable with values in $[0,\infty]$ and $\mathcal{T}$ is monotone in the sense that $\psi\leq\widetilde{\psi}$ pointwise implies $\mathcal{T}\psi\leq\mathcal{T}\widetilde{\psi}$, since $F$ is non-decreasing. Define the iteration 
  \[
  \psi^{(0)}:=K_f, \qquad \psi^{(m+1)}:=\mathcal{T}\psi^{(m)}.
  \]
  Since $\mathcal{T}\psi\geq K_f$ for every $\psi\geq0$, we have $\psi^{(1)}\geq\psi^{(0)}$, and by monotonicity of $\mathcal{T}$ the sequence $(\psi^{(m)})_{m\geq0}$ is non-decreasing. Its pointwise limit $\psi_f=\sup_m\psi^{(m)}$ is Borel measurable, and since $F$ is continuous and non-decreasing on $[0,\infty]$, monotone convergence applied to both sides of $\psi^{(m+1)}=\mathcal{T}\psi^{(m)}$ yields that $\psi_f=\mathcal{T}\psi_f$, which is exactly \eqref{eq:riccati_volterra}.

  For \eqref{eq:riccati_bound}, $K_f(r,y)\leq\lVert f\rVert_\infty\,\widetilde{k}(r)$ by the definition of $\widetilde{k}$. Using $F(q)\leq\overline{z}q$ from \eqref{eq:F_properties} and $\int_EK(r-s,y',y)\,m(dy')\leq\widetilde{k}(r-s)$, an induction over $m$ shows that
  \[
  \psi^{(m)}(r,y) \leq \lVert f\rVert_\infty\left(\widetilde{k}(r)+\sum_{n=2}^{m+1}(\lambda_1\overline{z})^{n-1}\,\widetilde{k}^{\star n}(r)\right), \qquad m\geq0,
  \]
  and letting $m\to\infty$ gives \eqref{eq:riccati_bound}. Minimality follows by induction: if $\widetilde{\psi}\geq0$ solves \eqref{eq:riccati_volterra}, then $\widetilde{\psi}=\mathcal{T}\widetilde{\psi}\geq K_f=\psi^{(0)}$, and $\widetilde{\psi}\geq\psi^{(m)}$ implies $\widetilde{\psi}=\mathcal{T}\widetilde{\psi}\geq\mathcal{T}\psi^{(m)}=\psi^{(m+1)}$ and hence $\widetilde{\psi}\geq\psi_f$.

  For uniqueness in the dominated class, let $\widetilde{\psi}$ be as stated. By minimality, $\psi_f\leq\widetilde{\psi}\leq\Theta$, and since $\Theta\in L^1((0,T])$, both solutions are finite outside the Lebesgue-null set $B:=\lbrace r\in(0,T]:\Theta(r)=\infty\rbrace$ (uniformly in $y$). Defining $\delta(r,y):=\widetilde{\psi}(r,y)-\psi_f(r,y)\geq0$ on the complement of $B\times E$ and $\delta:=0$ on $B\times E$, it holds that $\delta\leq\Theta$ pointwise. For $(r,y)$ with $r\notin B$, subtracting the two instances of \eqref{eq:riccati_volterra} and using that the set $B\times E$ is not charged by $ds\otimes m$ together with the Lipschitz bound in \eqref{eq:F_properties}, we obtain
  \[
  \delta(r,y) \leq \lambda_1\overline{z}\int_0^r\int_E K(r-s,y',y)\,\delta(s,y')\,m(dy')\,ds \leq \lambda_1\overline{z}\int_0^r \widetilde{k}(r-s)\,D(s)\,ds,
  \]
  for every measurable majorant $D$ of $\delta$ that is uniform in $y$. For $r\in B$, the same bounds hold trivially since $\delta(r,\cdot)=0$. Iterating $m$ times from $D=\Theta$ yields $\delta(r,y)\leq D_m(r)$ for all $(r,y)$ and $m$, where $D_m:=(\lambda_1\overline{z})^m\,\widetilde{k}^{\star m}\star\Theta$. For $\eta$ with $\lambda_1\overline{z}\widetilde{J}_\eta<1$ we find that
  \[
  \int_0^Te^{-\eta r}D_m(r)\,dr\leq(\lambda_1\overline{z}\widetilde{J}_\eta)^m\int_0^Te^{-\eta r}\Theta(r)\,dr\to0, 
  \]
  and since $\delta\leq\inf_m D_m$, it follows that $\delta(\cdot,y)=0$ Lebesgue-almost everywhere for every $y$.

  The function $\delta$ is measurable, so Tonelli's theorem yields $\delta=0$ for $ds\otimes m(dy)$-almost every $(s,y)$. Therefore the convolution terms on the right-hand sides of the two Riccati-Volterra equations agree for every $(r,y)$ and Lebesgue integration does not charge the product-null set on which the integrands may differ. Substitution into the two equations gives $\widetilde\psi=\psi_f$ everywhere on $(0,T]\times E$. If $\lambda_1=0$, then $\mathcal{T}\psi=K_f$, hence $\psi_f=K_f$.
\end{proof}

\begin{corollary}\label{cor:riccati_pointwise_finite}
In addition to Assumption~\ref{ass:K_transform}, suppose that the dual resolvent $\widetilde\rho$ of \eqref{eq:dual_resolvent} is finite at every $r\in(0,T]$. Then, for every $f\in B_b(E)_+$, the minimal solution $\psi_f$ of \eqref{eq:riccati_volterra} is finite at every $(r,y)\in(0,T]\times E$ and satisfies
\[
0\leq \psi_f(r,y)\leq \lVert f\rVert_\infty\widetilde\rho(r).
\]
It is the unique non-negative Borel solution dominated by a time-only majorant in $L^1((0,T])$.
\end{corollary}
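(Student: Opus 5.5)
This is a direct consequence of Lemma~\ref{lem:riccati_volterra}, and I would prove it by reading off the properties established there.

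\emph{Existence and the bound.} Fix $f\in B_b(E)_+$ and let $\psi_f$ be the minimal solution of \eqref{eq:riccati_volterra} from Lemma~\ref{lem:riccati_volterra}. The lemma gives the pointwise bound \eqref{eq:riccati_bound}, namely $\psi_f(r,y)\le\lVert f\rVert_\infty\widetilde\rho(r)$, at every $(r,y)\in(0,T]\times E$, and not merely almost everywhere. This holds because the inductive estimate on the iterates $\psi^{(m)}$ is a pointwise inequality and passes to the supremum. Non-negativity is immediate, since all iterates are non-negative.

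\emph{Finiteness.} Under the extra hypothesis, $\widetilde\rho(r)<\infty$ for every $r\in(0,T]$. Combined with $\lVert f\rVert_\infty<\infty$, this gives $\psi_f(r,y)<\infty$ everywhere. This strengthens the lemma's conclusion, which only gave finiteness off a Lebesgue-null set of times.

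\emph{Uniqueness.} The time-only majorant $\Theta:=\lVert f\rVert_\infty\widetilde\rho$ lies in $L^1((0,T])$ by \eqref{eq:dual_resolvent_L1}, so $\psi_f$ itself belongs to the dominated class. Let $\widetilde\psi\ge0$ be any Borel solution with $\widetilde\psi\le\Theta'$ for some $\Theta'\in L^1((0,T])$. The dominated-class uniqueness in Lemma~\ref{lem:riccati_volterra} then yields $\widetilde\psi=\psi_f$ everywhere on $(0,T]\times E$, which is exactly the uniqueness claim.

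\emph{Main obstacle.} There is essentially none. The only point needing care is that the bound \eqref{eq:riccati_bound} holds at every point and not just almost everywhere, and this is already built into the lemma's proof by monotone iteration.
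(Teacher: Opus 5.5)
Your proposal is correct and follows the paper's route: the paper's proof is simply the observation that the claims are immediate from Lemma~\ref{lem:riccati_volterra}. You spell out which parts of that lemma give each claim, namely the everywhere bound \eqref{eq:riccati_bound}, the integrability \eqref{eq:dual_resolvent_L1} that places $\psi_f$ in the dominated class, and the dominated-class uniqueness, which also covers the uniqueness assertion the paper leaves implicit.
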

\begin{proof}
The first two claims are immediate from Lemma~\ref{lem:riccati_volterra}. 
\end{proof}

For endpoint evaluations in the stochastic and Lebesgue integrals in what follows, we set
\[
K_f(0,y):=0,\qquad \psi_f(0,y):=0,\qquad y\in E.
\]
The Riccati--Volterra equation remains asserted only for positive time arguments and these endpoint values do not affect any integral, since the Lebesgue measure is atomless, and Proposition~\ref{prop:affine_compensator} gives $\mu(\{t\}\times E\times\Rp)=0$ almost surely for every deterministic $t$. Under the standard pointwise-finite majorants of Corollary~\ref{cor:riccati_pointwise_finite}, $\psi_f$ is finite for every positive lag. The main result of this section uses the pathwise representation of Proposition~\ref{prop:pathwise_identity}.

\begin{remark}\label{rem:time_orientation}
  The variable $r$ in \eqref{eq:riccati_volterra} is the \emph{remaining time} from a jump to the transform horizon. In the transform formula at horizon $t$, the relevant quantity attached to a jump at calendar time $s\leq t$ and location $y$ is $\psi_f(t-s,y)$. This time reversal is the Volterra analogue of the familiar backward Riccati equations of finite-dimensional affine theory.
\end{remark}

\begin{theorem}\label{thm:spatial_transform}
  Let $f\in B_b(E)_+$ and let $\psi_f$ be the minimal solution of \eqref{eq:riccati_volterra} from Lemma~\ref{lem:riccati_volterra}. Then, for every $t\in[0,T]$, it holds that
  \begin{equation}\label{eq:spatial_transform}
  \E\left[\exp\left(-\langle f,V_t\rangle\right)\right] = \exp\left(-\langle f,\phi_t\rangle - \int_0^t\int_E\left(\lambda_0+\lambda_1\phi_s(y)\right)F\bigl(\psi_f(t-s,y)\bigr)\,m(dy)\,ds\right),
  \end{equation}
  where the integral in the exponent is finite.
\end{theorem}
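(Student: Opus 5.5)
The plan is to build an exponential martingale on the horizon $[0,t]$ with the Riccati--Volterra solution frozen at the terminal time $t$. By Proposition~\ref{prop:pathwise_identity} and Tonelli's theorem,
\[
\langle f,V_t\rangle=\langle f,\phi_t\rangle+\int_{(0,t)\times E\times\Rp}K_f(t-s,y)\,z\,\mu(ds,dy,dz).
\]
Since $K_f$ only appears as the forcing term of $\psi_f$, the key step is to replace $K_f$ by $\psi_f$ and pay for this with the $\phi$-term. For $r\in[0,t]$ I would define
\[
M_r:=\exp\Bigl(-\int_{(0,r]\times E\times\Rp}\psi_f(t-s,y)\,z\,\mu(ds,dy,dz)+\int_0^r\int_E\bigl(\lambda_0+\lambda_1V_s(y)\bigr)F\bigl(\psi_f(t-s,y)\bigr)m(dy)\,ds\Bigr).
\]
Proposition~\ref{prop:affine_compensator} gives $\mu$ the compensator $\nu$. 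The standard exponential formula for integer-valued random measures with non-negative deterministic integrand $h(s,y,z)=\psi_f(t-s,y)z$ then makes $M$ a positive local martingale, because $\int(1-e^{-h})\,d\nu$ equals the drift term. This can be checked directly: $dM_{r}=M_{r-}\int(e^{-h}-1)(\mu-\nu)(dr,dy,dz)$. So $M$ is a supermartingale, and $\E[M_t]\le1$.

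Next I evaluate $M_t$ using the equations. Write $A:=\int_0^t\int_E\lambda_1V_s(y)F(\psi_f(t-s,y))\,m(dy)\,ds$. Substitute $V_s(y)=\phi_s(y)+\int_{(0,s)}K(s-u,y,w)z\,\mu(du,dw,dz)$, again from Proposition~\ref{prop:pathwise_identity}, and apply Tonelli. The $\mu$-part of $A$ becomes
\[
\int_{(0,t)}\Bigl[\lambda_1\int_u^t\int_E K(s-u,y,w)F(\psi_f(t-s,y))\,m(dy)\,ds\Bigr]z\,\mu(du,dw,dz).
\]
After the substitution $r=t-u$, the bracket is exactly $\psi_f(t-u,w)-K_f(t-u,w)$ by \eqref{eq:riccati_volterra}. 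Hence
\[
M_t=\exp\Bigl(-\langle f,V_t\rangle+\langle f,\phi_t\rangle+\int_0^t\int_E(\lambda_0+\lambda_1\phi_s)F(\psi_f(t-s,\cdot))\,dm\,ds\Bigr).
\]
This is deterministic times $e^{-\langle f,V_t\rangle}$. So the claim is equivalent to $\E[M_t]=1$. Finiteness of the exponent follows from $F(q)\le\overline z q$, the bound \eqref{eq:riccati_bound}, $\widetilde\rho\in L^1$, and $\phi$ bounded. One caveat is that the whole computation works on the event where the $\mu$-integrals are finite. That event has full probability, since $\E\int\psi_f(t-s,y)z\,d\mu\le\overline z\lVert f\rVert_\infty(\lambda_0+\lambda_1M)\int_0^t\widetilde\rho\,dr\,m(E)<\infty$.

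The main obstacle is the true martingale property of $M$: it has the self-exciting term $\lambda_1V$ in the exponent, so $M$ is not bounded. I would first try localisation in the excitation rather than in time. For $n\in\N$, let $V^{(n)}$ be the solution with $\lambda_1 V$ truncated to $\lambda_1(V\wedge n)$. Alternatively, and this fits the strict-past structure better, truncate the thinning to $u\le n$, which is the same as replacing $\lambda_0+\lambda_1V$ by $(\lambda_0+\lambda_1V)\wedge n$. In that case the drift is bounded and $M^{(n)}$ is a bounded-exponent true martingale. However, the Riccati identity then only holds up to an error, which is awkward.

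A cleaner alternative is Picard/branching iteration. Run the same computation with $\psi^{(m)}$ from the proof of Lemma~\ref{lem:riccati_volterra} and with $V$ replaced by the fixed-point iterates, using the monotone structure to get two inequalities. The supermartingale bound gives $\E[e^{-\langle f,V_t\rangle}]\le$ RHS. For the reverse inequality I would show that $M_r$ is uniformly integrable. Since $F\le\overline z\,\cdot$, the exponent's positive part is dominated by $\lambda_1\overline z\int V_s\psi_f\,dm\,ds+C$. I would then seek an exponential moment bound $\E[\exp(c\int_0^t\int_E V_s\widetilde\rho(t-s)\,dm\,ds)]<\infty$. To get it, apply the supermartingale argument itself with enlarged test functions; this is what makes the approach self-consistent.

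If that fails, I would instead prove $\E[M_t]\ge1$ by the approximation $\ell\to\ell\mathbf 1_{(1/n,n]}$ together with a bounded intensity cap. In those cases the finitely many jumps allow a direct conditioning argument on the jump sequence. I would then pass to the limit using continuity of $F$ and monotone convergence of the minimal solutions.
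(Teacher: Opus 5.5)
Your $M$ is exactly the stochastic exponential $\mathcal{E}(L)$ in the paper's proof, and your evaluation of $M_t$ is the paper's Step~3 at the terminal time. The supermartingale bound therefore correctly gives $\E[e^{-\langle f,V_t\rangle}]\le$ RHS.

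\textbf{The gap.} The reverse inequality $\E[M_t]\ge 1$ is not established, and it fails for a wrong reason: your premise that $M$ is unbounded is false. $M$ is bounded by a deterministic constant, despite the $\lambda_1V$ term. The missing step is to run your Tonelli--Riccati substitution on $[0,r]$ for every $r\le t$, not only at $r=t$. By \eqref{eq:pathwise_identity} and Tonelli,
\[
\lambda_1\int_0^r\int_E\bigl(V_s(y)-\phi_s(y)\bigr)F\bigl(\psi_f(t-s,y)\bigr)\,m(dy)\,ds
=\int_{(0,r)\times E\times\Rp}z\,\Bigl[\lambda_1\int_u^r\int_EK(s-u,y,w)\,F\bigl(\psi_f(t-s,y)\bigr)\,m(dy)\,ds\Bigr]\,\mu(du,dw,dz).
\]
Since $F\ge0$, the bracket is at most the same integral over $[u,t]$. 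By \eqref{eq:riccati_volterra} at $(t-u,w)$, that integral equals $\psi_f(t-u,w)-K_f(t-u,w)$.

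Work on the full-probability event where $\int z\,\psi_f(t-u,w)\,d\mu<\infty$ and $\mu$ charges no $u$ with $\widetilde\rho(t-u)=\infty$. Write $C_0:=\int_0^t\int_E(\lambda_0+\lambda_1\phi_s(y))F(\psi_f(t-s,y))\,m(dy)\,ds<\infty$. Then for all $r\in[0,t]$,
\[
\log M_r\le-\int_{(0,r)\times E\times\Rp}z\,K_f(t-u,w)\,\mu(du,dw,dz)+C_0\le C_0 .
\]
This is the paper's identity $\Gamma_r=\Gamma_0\,\mathcal{E}(L)_r$, where $\Gamma_r\in[0,1]$ is built from the frozen forward field $\xi_r$. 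A bounded local martingale is a true martingale by dominated convergence along a localising sequence. Hence $\E[M_t]=1$, with no truncation, approximation or moment assumption beyond $\overline z<\infty$.

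\textbf{Why your fallbacks do not close it.} The exponential-moment bound $\E[\exp(c\int_0^t\int_EV_s(y)\widetilde\rho(t-s)\,m(dy)\,ds)]<\infty$ is false in general under the standing assumptions. $\mu$ dominates the Poisson random measure obtained by thinning $N$ with $\mathbf 1_{\{u\le\lambda_0\}}$, which has intensity $\lambda_0\,ds\,m(dy)\,\ell(dz)$. So this expectation is infinite as soon as $\lambda_0>0$, the weight is non-trivial, and $\int e^{\varepsilon z}\ell(dz)=\infty$ for every $\varepsilon>0$ (e.g.\ $\ell(dz)=z^{-3}\mathbf 1_{\{z>1\}}\,dz$). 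Moreover, rerunning the supermartingale argument with enlarged non-negative test functions only yields Laplace transforms at non-negative arguments, never exponential moments. The intensity cap destroys the affine compensator and hence the Riccati cancellation, as you note yourself. The mark-truncation route lacks the limit argument identifying the limits of the approximating fields and Riccati solutions. As it stands, the proposal proves only the inequality ``$\le$''.
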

\begin{proof}
  First, for $t=0$, both sides equal $e^{-\langle f,\phi_0\rangle}$. Fix $t\in(0,T]$, set $c_f=\lVert f\rVert_\infty$, and recall that $M=\sup_{(t,x)}\E[V_t(x)]<\infty$.

  We first handle the case where $\lambda_1=0$. In this case, $\psi_f=K_f$ by Lemma~\ref{lem:riccati_volterra}, and the accepted-jump measure $\mu$ is a Poisson random measure with deterministic intensity $\lambda_0\,ds\,m(dy)\,\ell(dz)$. By the pathwise Volterra identity and Tonelli's theorem, it follows that
  \[
  \langle f,V_t\rangle
  =\langle f,\phi_t\rangle
  +\int_{(0,t)\times E\times\Rp}zK_f(t-s,y)\,\mu(ds,dy,dz).
  \]
  The exponential formula for Poisson random measures therefore gives
  \[
  \E[e^{-\langle f,V_t\rangle}]
  =\exp\left(-\langle f,\phi_t\rangle
  -\lambda_0\int_0^t\int_EF(K_f(t-s,y))\,m(dy)\,ds\right),
  \]
  which is \eqref{eq:spatial_transform}. Henceforth assume $\lambda_1>0$.

  \emph{Step 1 (Finiteness and the good event $\Omega_0$).} Since $\int_EK_f(r,w)\,m(dw)\leq c_f\,m(E)\,\widetilde{k}(r)$ and $\psi_f\leq c_f\widetilde{\rho}$ by \eqref{eq:riccati_bound}, the compensator identity from the proof of Proposition~\ref{prop:affine_compensator} applied to deterministic integrands, together with \eqref{eq:V_sup_moment_bound} yields the bounds
  \begin{align}
  \E\left[\int_{(0,t]\times E\times\Rp}z\,K_f(t-u,w)\,\mu(du,dw,dz)\right] &\leq \overline{z}(\lambda_0+\lambda_1M)\,c_f\,m(E)\int_0^t\widetilde{k}(r)\,dr<\infty,\label{eq:fin_a}\\
  \E\left[\int_{(0,t]\times E\times\Rp}z\,\psi_f(t-u,w)\,\mu(du,dw,dz)\right] &\leq \overline{z}(\lambda_0+\lambda_1M)\,c_f\,m(E)\int_0^t\widetilde{\rho}(r)\,dr<\infty,\label{eq:fin_b}
  \end{align}
  and, using $F(\psi_f(t-r,y))\leq \overline{z}c_f\widetilde{\rho}(t-r)$ together with Tonelli's theorem,
  \begin{equation}\label{eq:fin_c}
  \E\left[\int_0^t\int_E V_r(y)\,F\bigl(\psi_f(t-r,y)\bigr)\,m(dy)\,dr\right] \leq M\,\overline{z}\,c_f\,m(E)\int_0^t\widetilde{\rho}(r)\,dr<\infty.
  \end{equation}
  Moreover, let $B:=\lbrace r\in(0,t]:\widetilde{k}(r)=\infty\text{ or }\widetilde{\rho}(r)=\infty\rbrace$, which is a deterministic Lebesgue-null set. Both $\mu(\lbrace t\rbrace\times E\times\Rp)$ and $\mu(\lbrace(u,w,z):t-u\in B\rbrace)$ vanish almost surely because their $\nu$-masses vanish and the compensator identity applies. We then define $\Omega_0$ as the event on which all of the following properties hold:
  \begin{enumerate}
    \item The pathwise Volterra identity \eqref{eq:pathwise_identity} holds simultaneously for every $(r,y)$.
    \item The three pathwise integrals whose expectations appear in \eqref{eq:fin_a}, \eqref{eq:fin_b}, and \eqref{eq:fin_c} are finite.
    \item $\mu(\lbrace t\rbrace\times E\times\Rp)=0$.
    \item $\mu(\lbrace(u,w,z):t-u\in B\rbrace)=0$.
  \end{enumerate}
  The preceding estimates give $\Prob(\Omega_0)=1$.

  \emph{Step 2 (Frozen field and candidate process).} For $s\in[0,t]$ define the frozen forward field
  \begin{equation}\label{eq:frozen_field}
  \xi_s(r,y) := \phi_r(y) + \int_{(0,s]\times E\times\Rp}K(r-u,y,w)\,z\,\mu(du,dw,dz), \qquad (r,y)\in[s,t]\times E,
  \end{equation}
  where the integrand is well defined at $r=u$ by the convention $K(0,\cdot,\cdot)=0$. The field $\xi_s$ is non-negative and, for fixed $s$, jointly measurable in $(\omega,r,y)$. We then set
  \begin{equation}\label{eq:Gamma_def}
  \Gamma_s := \exp\left( -\langle f,\xi_s(t,\cdot)\rangle - \int_s^t\int_E\left(\lambda_0+\lambda_1\xi_s(r,y)\right)F\bigl(\psi_f(t-r,y)\bigr)\,m(dy)\,dr \right) \in[0,1].
  \end{equation}
  On $\Omega_0$ the exponent in \eqref{eq:Gamma_def} is finite for every $s$. Tonelli's theorem and \eqref{eq:fin_a} yields that
  \begin{equation}\label{eq:xi_pairing}
  \langle f,\xi_s(t,\cdot)\rangle = \langle f,\phi_t\rangle + \int_{(0,s]\times E\times\Rp}z\,K_f(t-u,w)\,\mu(du,dw,dz) < \infty.
  \end{equation}
  Additionally, it holds that
  \begin{equation}\label{eq:xi_drift_split}
  \begin{aligned}
  \int_s^t\int_E\xi_s(r,y)F\bigl(\psi_f(t-r,y)\bigr)m(dy)\,dr &= \int_s^t\int_E\phi_r(y)F\bigl(\psi_f(t-r,y)\bigr)m(dy)\,dr \\
  &\quad + \int_{(0,s]\times E\times\Rp}z\,I_s(u,w)\,\mu(du,dw,dz),
  \end{aligned}
  \end{equation}
  where $I_s(u,w):=\int_s^t\int_EK(r-u,y,w)F(\psi_f(t-r,y))\,m(dy)\,dr$. The first term in \eqref{eq:xi_drift_split} is bounded by $\lVert\phi\rVert_\infty\overline{z}c_fm(E)\int_0^t\widetilde\rho(r)\,dr<\infty$, and for the second we note that, by the Riccati--Volterra equation \eqref{eq:riccati_volterra} evaluated at $(t-u,w)$,
  \begin{equation}\label{eq:I_s_bound}
  \lambda_1\,I_s(u,w) \leq \lambda_1\int_u^t\int_EK(r-u,y,w)F(\psi_f(t-r,y))\,m(dy)\,dr = \psi_f(t-u,w)-K_f(t-u,w),
  \end{equation}
  so that $\lambda_1\int z\,I_s\,d\mu\leq\int z\,\psi_f(t-u,w)\,d\mu<\infty$ on $\Omega_0$ by \eqref{eq:fin_b}.

  \emph{Step 3 (Reduction to a stochastic exponential).} We claim that on $\Omega_0$, for all $s\in[0,t]$,
  \begin{equation}\label{eq:Gamma_exponential}
  \Gamma_s = \Gamma_0\,\exp\left( \int_{(0,s]}\chi\,d\mu - \int_{(0,s]}\left(e^{\chi}-1\right)d\nu \right), \qquad \chi(u,w,z):=-z\,\psi_f(t-u,w).
  \end{equation}
  To see this, fix $u\in(0,s]$ with $t-u\notin B$. On $\Omega_0$ this covers $\mu$-almost every atom. The substitution $r=u+v$ combined with the Riccati--Volterra equation \eqref{eq:riccati_volterra} at $(t-u,w)$ give
  \[
  \lambda_1 I_s(u,w) = \bigl[\psi_f(t-u,w)-K_f(t-u,w)\bigr] - \lambda_1\int_0^{s-u}\int_EK(v,y,w)F\bigl(\psi_f\bigl((t-u)-v,y\bigr)\bigr)\,m(dy)\,dv,
  \]
  where the subtraction is justified because all terms are finite for $t-u\notin B$. Integrating against $z\,\mu(du,dw,dz)$ over $(0,s]$, substituting back $r=u+v\in(u,s]$ in the last term, and applying Tonelli's theorem, we obtain
  \[
  \lambda_1\int_{(0,s]}z\,I_s\,d\mu = \int_{(0,s]}z\bigl[\psi_f-K_f\bigr](t-u,w)\,d\mu - \lambda_1\int_0^s\int_E\bigl(V_r(y)-\phi_r(y)\bigr)F\bigl(\psi_f(t-r,y)\bigr)\,m(dy)\,dr,
  \]
  where the last step uses the pathwise identity \eqref{eq:pathwise_identity} (the boundary $\lbrace u=r\rbrace$ contributes a $dr$-null set). Combining this with \eqref{eq:xi_pairing} and \eqref{eq:xi_drift_split} and collecting terms, we find that
  \[
  \begin{aligned}
  \log(\Gamma_s) &= -\langle f,\phi_t\rangle - \int_0^t\int_E(\lambda_0+\lambda_1\phi_r(y))F\bigl(\psi_f(t-r,y)\bigr)m(dy)\,dr \\
  &\qquad - \int_{(0,s]}z\,\psi_f(t-u,w)\,d\mu + \int_0^s\int_E(\lambda_0+\lambda_1V_r(y))F\bigl(\psi_f(t-r,y)\bigr)m(dy)\,dr.
  \end{aligned}
  \]
  Due to the affine form \eqref{eq:affine_compensator} of $\nu$, an application of Tonelli's theorem and the definition \eqref{eq:levy_exponent} of $F$ yields that
  \[
  \int_{(0,s]}\left(1-e^{\chi}\right)d\nu = \int_0^s\int_E(\lambda_0+\lambda_1V_r(y))\,F\bigl(\psi_f(t-r,y)\bigr)\,m(dy)\,dr,
  \]
  which is precisely the claimed identity \eqref{eq:Gamma_exponential}.

  \emph{Step 4 (Martingale argument).} The function $\chi$ is non-positive and Borel measurable, with $\lvert e^{\chi}-1\rvert\leq1\wedge(z\psi_f(t-u,w))$. By the definition of $F$ and the compensator formula, it holds on $\Omega_0$ that
  \[
  \begin{aligned}
  \int_{(0,t]}\lvert e^{\chi}-1\rvert\,d\nu
  &=\int_0^t\int_E(\lambda_0+\lambda_1V_r(y))
   F(\psi_f(t-r,y))\,m(dy)\,dr\\
  &=\lambda_0\int_0^t\int_EF(\psi_f(t-r,y))\,m(dy)\,dr\\
  &\quad+\lambda_1\int_0^t\int_EV_r(y)F(\psi_f(t-r,y))\,m(dy)\,dr<\infty.
  \end{aligned}
  \]
  The first term is finite because $F(q)\leq\overline zq$, \eqref{eq:riccati_bound}, and \eqref{eq:dual_resolvent_L1} apply. The second term is the pathwise integral controlled in \eqref{eq:fin_c}. Likewise, from \eqref{eq:fin_b}, we find that
  \[
  \int_{(0,t]}\lvert e^{\chi}-1\rvert\,d\mu
  \leq\int_{(0,t]}z\psi_f(t-u,w)\,d\mu<\infty.
  \]
  The process $s\mapsto\int_{(0,s]}\lvert e^{\chi}-1\rvert\,d\nu$ is predictable, increasing, and integrable. Its terminal expectation is finite by the deterministic first bound and by the expectation estimate \eqref{eq:fin_c} and thus it is of locally integrable variation. Proposition~II.1.28 of \cite{JacodShiryaev2003} therefore shows that
  \[
  L_s := \int_{(0,s]}\left(e^{\chi}-1\right)d\mu - \int_{(0,s]}\left(e^{\chi}-1\right)d\nu, \qquad s\in[0,t],
  \]
  is a well-defined purely discontinuous local martingale. By Lemma~\ref{lem:distinct_times}, $\mu$ almost surely carries at most one atom at each time. Writing $(u,W_u,Z_u)$ for the atom at a jump time $u$ of $L$, the jumps are $\Delta L_u=e^{\chi(u,W_u,Z_u)}-1\in(-1,0]$, while the $\nu$-part is continuous in time. The absolute convergence proved above justifies the product formula in \cite[Theorem~I.4.61]{JacodShiryaev2003} and gives
  \[
  \mathcal{E}(L)_s = e^{L_s}\prod_{u\leq s}(1+\Delta L_u)e^{-\Delta L_u} = \exp\left( \int_{(0,s]}\chi\,d\mu - \int_{(0,s]}\left(e^{\chi}-1\right)d\nu \right), \qquad s\in[0,t].
  \]
  By Step 3, $\Gamma_s=\Gamma_0\mathcal{E}(L)_s$ for all $s\in[0,t]$ on $\Omega_0$. The constant $\Gamma_0$ is deterministic and strictly positive because its exponent is finite. Since $\Gamma_s\in[0,1]$, we have $\mathcal{E}(L)_s\leq\Gamma_0^{-1}$. For any localizing sequence $(\tau_k)$, dominated convergence therefore yields $\E[\mathcal{E}(L)_t]=\lim_k\E[\mathcal{E}(L)_{t\wedge\tau_k}]=1$. Thus $\mathcal{E}(L)$ is a true martingale and $\E[\Gamma_t]=\Gamma_0$.

  \emph{Step 5 (Identification).} On $\Omega_0$ we have $\mu(\lbrace t\rbrace\times E\times\Rp)=0$, so that, for every $y\in E$,
  \[
  \xi_t(t,y) = \phi_t(y)+\int_{(0,t]}K(t-u,y,w)\,z\,d\mu = \phi_t(y)+\int_{(0,t)}K(t-u,y,w)\,z\,d\mu = V_t(y),
  \]
  by the pathwise identity \eqref{eq:pathwise_identity}. Since the time integral in the exponent of $\Gamma_t$ runs over the empty interval $(t,t]$, we obtain $\Gamma_t=\exp(-\langle f,V_t\rangle)$ almost surely. On the other hand, $\xi_0=\phi$, so $\Gamma_0$ equals the right-hand side of \eqref{eq:spatial_transform}, and $\E[\Gamma_t]=\Gamma_0$ which completes the proof.
\end{proof}

\begin{remark}\label{rem:affine_structure}
The log-Laplace functional in \eqref{eq:spatial_transform} is affine in $\phi$, while $\psi_f$ depends only on $(K,\ell,\lambda_1,f)$. Moreover, $0\leq\Gamma\leq1$ in the proof, so the stochastic exponential is a true martingale without any exponential-moment assumption on $\ell$.
\end{remark}

\begin{example}[L\'evy--Khintchine formula for ambit fields]\label{ex:levy_khintchine}
  If $\lambda_1=0$, then $\psi_f=K_f$ by Lemma~\ref{lem:riccati_volterra}, and \eqref{eq:spatial_transform} becomes
  \[
  \E\left[e^{-\langle f,V_t\rangle}\right] = \exp\left(-\langle f,\phi_t\rangle-\lambda_0\int_0^t\int_EF\bigl(K_f(s,y)\bigr)\,m(dy)\,ds\right),
  \]
  after the substitution $s\mapsto t-s$. This is the classical L\'evy--Khintchine identity (see \cite{Rosinski1989}) for the L\'evy-driven convolution of Example~\ref{ex:ambit_field}, as can be verified directly from the exponential formula for the Poisson random measure $\mu$ with deterministic intensity $\lambda_0\,ds\,m(dy)\,\ell(dz)$.
\end{example}

\begin{example}[Affine Volterra jump processes and CBI-type processes]\label{ex:transform_singleton}
  In the setting of Example~\ref{ex:affine_pure_jump} ($E=\lbrace e\rbrace$, $m=\delta_e$, scalar kernel $K\in L^1((0,T])$), Assumption~\ref{ass:K_transform} holds automatically with $\widetilde{k}=k=K$, and for $f\geq0$ the Riccati--Volterra equation reads
  \[
  \psi_f(r) = f\,K(r) + \lambda_1\int_0^rK(r-s)\,F\bigl(\psi_f(s)\bigr)\,ds,
  \]
  with transform $\E[e^{-fV_t}] = \exp(-f\phi_t-\int_0^t(\lambda_0+\lambda_1\phi_s)F(\psi_f(t-s))\,ds)$, in line with the affine Volterra jump processes of \cite{BondiLivieriPulido2024}. For $K\equiv1$ and $\phi\equiv v_0\geq0$, the positive-time solution is the restriction of the continuous function $\widehat\psi_f$ satisfying
  \[
  \widehat\psi_f'(r)=\lambda_1F(\widehat\psi_f(r)),
  \qquad \widehat\psi_f(0)=f.
  \]
  The auxiliary convention $\psi_f(0)=0$ used in stochastic integrals does not change this right limit and the transform becomes
  \[
  \E\left[e^{-fV_t}\right] = \exp\left(-v_0\,\widehat\psi_f(t)-\lambda_0\int_0^tF\bigl(\widehat\psi_f(s)\bigr)\,ds\right),
  \]
  where we used $f+\lambda_1\int_0^tF(\widehat\psi_f(s))\,ds=\widehat\psi_f(t)$. In the standard CBI sign convention, the pure-jump branching and immigration mechanisms are $\Psi(q)=-\lambda_1F(q)$ and $\Phi(q)=\lambda_0F(q)$, respectively. The forward equation above is the corresponding backward Laplace equation and this recovers the finite-variation CBI construction (compare with \cite{DawsonLi2012}).
\end{example}

\begin{example}[Finite-rank exponential kernels]\label{ex:finite_rank}
  Let
  \[
  K(r,x,y)=e^{-\varrho r}\sum_{i,j=1}^db_i(x)\,\kappa_{ij}\,a_j(y),
  \]
  where $\varrho>0$, the functions $a_j,b_i:E\to\Rp$ are bounded and Borel, and the coefficients satisfy $\kappa_{ij}\geq0$. Both the admissibility criteria and Assumption~\ref{ass:K_transform} hold, and the Riccati--Volterra equation \eqref{eq:riccati_volterra} reduces to a $d$-dimensional system. Consider
  \[
  q_j(r) = e^{-\varrho r}c_j + \lambda_1\int_{0}^{r}e^{-\varrho(r-s)}\sum_{i=1}^{d}\kappa_{ij}\int_{E}b_i(y)F\Bigl(\sum_{k=1}^dq_k(s)a_k(y)\Bigr)\,m(dy)ds, \quad c_j=\sum_{i=1}^d\kappa_{ij}\langle f,b_i\rangle.
  \]
  Since $F$ is Lipschitz and the $a_k$, $b_i$ are bounded, this integral system has a unique bounded componentwise non-negative solution $q=(q_1,\ldots,q_d)$ on $[0,T]$, which is continuous by Picard iteration. The monotone iterates of Lemma~\ref{lem:riccati_volterra} preserve the finite-rank form with componentwise non-decreasing coefficient iterates that, by induction, are dominated by $q$, so their limit is a bounded solution of the same system and coincides with $q$. Hence $\psi_f(r,y)=\sum_{j=1}^dq_j(r)\,a_j(y)$ for $r\in(0,T]$. The initial values $q_j(0)=c_j$ describe the right limit of this positive-time solution at $r=0$, while the endpoint convention $\psi_f(0,\cdot)=0$ used in stochastic integrals is unaffected, exactly as in Example~\ref{ex:transform_singleton}. Multiplication by $e^{\varrho r}$ followed by differentiation shows that the integral system is equivalent to the generalized Riccati system
  \[
  q_j'(r) = -\varrho\,q_j(r) + \lambda_1\sum_{i=1}^d\kappa_{ij}\int_Eb_i(y)\,F\Bigl(\sum_{k=1}^dq_k(r)a_k(y)\Bigr)\,m(dy), \qquad q_j(0)=c_j,
  \]
  which connects the transform formula \eqref{eq:spatial_transform} to the exponential-affine transforms of classical finite-dimensional affine jump processes \cite{DuffiePanSingleton2000,DuffieFilipovicScachermayer2003,ErraisGieseckeGoldberg2010}.
\end{example}

\section{Space-time and conditional affine transforms}\label{sec:spacetime_transform}
The purpose of this section is to derive the Laplace transform of 
\[
\int_0^T\langle g_s,V_s\rangle\,ds,\quad g\in B_b([0,T]\times E)_+.
\]
Throughout the section, Assumption~\ref{ass:K_transform} remains in force and we note that the Laplace transform depends only on the $\mathcal{H}_\eta$-equivalence class of $V$. Define the future dual source $G_g$ as
\begin{equation}\label{eq:future_source}
G_g(s,y) := \int_s^T\int_E g_r(x)\,K(r-s,x,y)\,m(dx)\,dr, \qquad (s,y)\in[0,T]\times E,
\end{equation}
which is Borel measurable and satisfies
\begin{equation}\label{eq:C_g_bound}
0\leq G_g(s,y) \leq \lVert g\rVert_\infty\int_0^{T}\widetilde{k}(v)\,dv =: C_g < \infty,
\end{equation}
since $\int_Eg_r(x)K(r-s,x,y)\,m(dx)\leq\lVert g\rVert_\infty\,\widetilde{k}(r-s)$. The associated Riccati--Volterra equation now runs backward from the terminal time $T$.

\begin{lemma}\label{lem:backward_riccati}
  Let $g\in B_b([0,T]\times E)_+$. Then there exists a bounded Borel function $\psi_g:[0,T]\times E\to\Rp$ satisfying
  \begin{equation}\label{eq:backward_riccati}
  \psi_g(s,y) = G_g(s,y) + \lambda_1\int_s^T\int_EK(r-s,y',y)\,F\bigl(\psi_g(r,y')\bigr)\,m(dy')\,dr, \qquad (s,y)\in[0,T]\times E,
  \end{equation}
  and $\psi_g$ is the unique bounded non-negative Borel solution of \eqref{eq:backward_riccati}. For any $\eta\geq0$ with $\lambda_1\overline{z}\widetilde{J}_\eta<1$ it satisfies $0\leq\psi_g\leq e^{\eta T}C_g/(1-\lambda_1\overline{z}\widetilde{J}_\eta)$. Moreover, for every $t\in[0,T]$ the restriction of $\psi_g$ to $[t,T]\times E$ depends on $g$ only through its restriction to $[t,T]\times E$, and solves \eqref{eq:backward_riccati} with $[t,T]$ in place of $[0,T]$.
\end{lemma}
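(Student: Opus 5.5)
The plan is to run a Banach fixed-point argument in a weighted supremum norm adapted to backward time. Set $\lVert\psi\rVert_{\eta}^{\leftarrow}:=\sup_{(s,y)\in[0,T]\times E}e^{-\eta(T-s)}\lvert\psi(s,y)\rvert$ on the space $\mathcal{B}$ of bounded Borel functions on $[0,T]\times E$. Define $\mathcal{T}_g\psi$ as the right-hand side of \eqref{eq:backward_riccati}. Measurability of $\mathcal{T}_g\psi$ for Borel $\psi\ge0$ follows from Tonelli's theorem and continuity of $F$, as in Lemma~\ref{lem:riccati_volterra}; measurability of $G_g$ is as stated before the lemma.

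Because $F$ is non-negative and non-decreasing with $F(q)\le\overline z q$, and $\int_E K(r-s,y',y)\,m(dy')\le\widetilde k(r-s)$ by Assumption~\ref{ass:K_transform}, the operator $\mathcal{T}_g$ maps the closed cone $\mathcal{B}^+$ of non-negative bounded Borel functions into itself. Concretely,
\[
e^{-\eta(T-s)}\,\mathcal{T}_g\psi(s,y)\le C_g+\lambda_1\overline z\int_s^T e^{-\eta(r-s)}\widetilde k(r-s)\,dr\,\lVert\psi\rVert_\eta^{\leftarrow}\le C_g+\lambda_1\overline z\widetilde J_\eta\lVert\psi\rVert_\eta^{\leftarrow}.
\]
The weight factorises as $e^{-\eta(T-s)}=e^{-\eta(r-s)}e^{-\eta(T-r)}$, and one uses $e^{-\eta(T-s)}G_g\le C_g$. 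The Lipschitz bound in \eqref{eq:F_properties} gives, by the same computation,
\[
\lVert\mathcal{T}_g\psi-\mathcal{T}_g\widetilde\psi\rVert_\eta^{\leftarrow}\le\lambda_1\overline z\widetilde J_\eta\lVert\psi-\widetilde\psi\rVert_\eta^{\leftarrow}.
\]
Choose $\eta$ with $\lambda_1\overline z\widetilde J_\eta<1$; this is possible since $\widetilde J_\eta\to0$. Then $\mathcal{T}_g$ is a contraction on the complete metric space $\mathcal{B}^+$, since all norms $\lVert\cdot\rVert_\eta^{\leftarrow}$ are equivalent to the sup norm. Existence of a fixed point $\psi_g$ follows, and so does uniqueness among bounded non-negative Borel solutions.

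For the explicit bound, fix any $\eta$ with $\lambda_1\overline z\widetilde J_\eta<1$. Apply the first estimate to the fixed point: $\lVert\psi_g\rVert_\eta^{\leftarrow}\le C_g+\lambda_1\overline z\widetilde J_\eta\lVert\psi_g\rVert_\eta^{\leftarrow}$. Since this norm is finite, we get $\lVert\psi_g\rVert_\eta^{\leftarrow}\le C_g/(1-\lambda_1\overline z\widetilde J_\eta)$. Multiplying by $e^{\eta(T-s)}\le e^{\eta T}$ gives the stated pointwise bound. The fixed point is the same for every admissible $\eta$, because the norms are equivalent and uniqueness holds.

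For the localisation statement, observe that for $s\ge t$ both $G_g(s,y)$ and the integral term in \eqref{eq:backward_riccati} involve only $r\in[s,T]\subseteq[t,T]$. Hence $\mathcal{T}_g$ restricted to functions on $[t,T]\times E$ is well defined and depends only on $g|_{[t,T]\times E}$. The same contraction argument on $[t,T]\times E$, with $\widetilde J_\eta$ over $[0,T-t]$ still bounded by the full one, yields a unique bounded solution there. The restriction of $\psi_g$ solves this restricted equation, so it coincides with that unique solution. Consequently it depends on $g$ only through $g|_{[t,T]\times E}$.

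I expect the only delicate point to be the bookkeeping of the backward weight. The backward weight must play the role of $e^{-\eta t}$ in Theorem~\ref{thm:Psi_contraction}. The dual column majorant $\widetilde k$, not the row majorant $k$, is what controls the integral over $y'$.
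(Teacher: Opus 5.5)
Your proof is correct, but it takes a different route from the paper's.

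\textbf{The paper's route.} The paper mirrors Lemma~\ref{lem:riccati_volterra}. It builds $\psi_g$ as the increasing pointwise limit of the monotone iterates $\psi^{(0)}=G_g$, $\psi^{(m+1)}=\mathcal{T}_g\psi^{(m)}$. This limit a priori takes values in $[0,\infty]$, so boundedness needs a separate step. The paper dominates the iterates by deterministic majorants $b_m$ whose weighted suprema $B_m=\sup_s e^{-\eta(T-s)}b_m(s)$ satisfy $B_{m+1}\le C_g+\lambda_1\overline{z}\widetilde{J}_\eta B_m$. Uniqueness among bounded solutions is then proved by a third, Gronwall-type iteration with majorants $D_m$.

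\textbf{Your route.} You observe that the source $G_g$ is already bounded by $C_g$, because the time integral absorbs the singularity of $K$. Hence $\mathcal{T}_g$ is a contraction on the complete cone of bounded non-negative Borel functions in the backward-weighted sup norm. Existence, uniqueness in the bounded class, the explicit bound and the localisation statement then all follow from one estimate. Your weight factorisation $e^{-\eta(T-s)}=e^{-\eta(r-s)}e^{-\eta(T-r)}$ and your use of the column majorant $\widetilde{k}$ are exactly the ingredients behind the paper's $B_m$ and $D_m$ recursions.

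\textbf{Comparison.} Your argument is shorter. It also yields geometric convergence of Picard iterates from any bounded non-negative starting point. The paper's monotone construction keeps the proof parallel to the fixed-time case of Lemma~\ref{lem:riccati_volterra}. There the source $K_f(r,y)$ can blow up as $r\downarrow 0$, so no sup-norm contraction is available and a monotone/minimal-solution argument is needed. The monotone route also makes it immediate, by the same induction, that $\psi_g$ is minimal even among $[0,\infty]$-valued Borel solutions.
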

\begin{proof}
  We repeat the monotone iteration of Lemma~\ref{lem:riccati_volterra} in reversed time. Denote by $\mathcal{T}_g$ the operator mapping a Borel function $\psi:[0,T]\times E\to[0,\infty]$ to the right-hand side of \eqref{eq:backward_riccati}. As previously, $\mathcal{T}_g$ preserves Borel measurability and is monotone, and the iterates 
  \[
  \psi^{(0)}:=G_g, \quad \psi^{(m+1)}:=\mathcal{T}_g\psi^{(m)},
  \]
  increase pointwise to a solution $\psi_g$ of \eqref{eq:backward_riccati} with values in $[0,\infty]$ by monotone convergence.

  For the bound, fix $\eta\geq0$ with $\lambda_1\overline{z}\widetilde{J}_\eta<1$ and define the deterministic majorants $b_0:\equiv C_g$ and $b_{m+1}(s):=C_g+\lambda_1\overline{z}\int_s^T\widetilde{k}(r-s)\,b_m(r)\,dr$. Using $F(q)\leq\overline{z}q$ and $\int_EK(r-s,y',y)\,m(dy')\leq\widetilde{k}(r-s)$, an induction gives $\psi^{(m)}\leq b_m$ pointwise for every $m$. Writing $B_m:=\sup_{s\in[0,T]}e^{-\eta(T-s)}b_m(s)$ and inserting $e^{\eta(T-r)}=e^{\eta(T-s)}e^{-\eta(r-s)}$, we obtain
  \[
  e^{-\eta(T-s)}b_{m+1}(s) \leq C_g + \lambda_1\overline{z}\,B_m\int_s^Te^{-\eta(r-s)}\widetilde{k}(r-s)\,dr \leq C_g+\lambda_1\overline{z}\widetilde{J}_\eta B_m,
  \]
  so that $B_{m+1}\leq C_g+\lambda_1\overline{z}\widetilde{J}_\eta B_m$ and hence $\sup_mB_m\leq C_g/(1-\lambda_1\overline{z}\widetilde{J}_\eta)$. Letting $m\to\infty$ yields the stated bound for $\psi_g$ and in particular $\psi_g$ is bounded and finite everywhere.

  For uniqueness, let $\widetilde{\psi}$ be another bounded non-negative Borel solution and set $\delta:=\lvert\widetilde{\psi}-\psi_g\rvert\leq\Theta$ for a constant $\Theta<\infty$. Subtracting the two equations (all terms are finite) and using the Lipschitz property of $F$ from \eqref{eq:F_properties} gives $\delta(s,y)\leq\lambda_1\overline{z}\int_s^T\widetilde{k}(r-s)\,D(r)\,dr$ for any deterministic majorant $D$ of $\delta$ that is uniform in $y$. Defining $D_0:\equiv\Theta$ and $D_{m+1}(s):=\lambda_1\overline{z}\int_s^T\widetilde{k}(r-s)D_m(r)\,dr$, induction shows both $\delta\leq D_m$ and $D_m(s)\leq\Theta\,(\lambda_1\overline{z}\widetilde{J}_\eta)^m\,e^{\eta(T-s)}$ for all $m$, so $\delta\equiv 0$.

  Finally, equation \eqref{eq:backward_riccati} on $[t,T]\times E$ involves $\psi_g$ and $g$ only through their values on $[t,T]\times E$, so the restriction statement follows from applying the existence and uniqueness assertions on the interval $[t,T]$ and noting that the restriction of $\psi_g$ is a bounded solution there.
\end{proof}

\begin{theorem}\label{thm:spacetime_transform}
  Let $g\in B_b([0,T]\times E)_+$ and let $\psi_g$ be the unique bounded solution of \eqref{eq:backward_riccati}. Then
  \begin{equation}\label{eq:spacetime_transform}
  \begin{aligned}
  &\E\left[\exp\left(-\int_0^T\langle g_s,V_s\rangle\,ds\right)\right]\\
  &\quad=\exp\left(
  -\int_0^T\langle g_s,\phi_s\rangle\,ds
  -\int_0^T\int_E(\lambda_0+\lambda_1\phi_s(y))
  F(\psi_g(s,y))\,m(dy)\,ds\right),
  \end{aligned}
  \end{equation}
  where the integrals in the exponent are finite.
\end{theorem}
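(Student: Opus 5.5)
We mirror the proof of Theorem~\ref{thm:spatial_transform}, replacing the fixed-horizon solution $\psi_f(t-\cdot,\cdot)$ by the backward solution $\psi_g(\cdot,\cdot)$ of Lemma~\ref{lem:backward_riccati}. The case $\lambda_1=0$ is handled directly: the accepted-jump measure $\mu$ is then Poisson with intensity $\lambda_0\,ds\,m(dy)\,\ell(dz)$. By Proposition~\ref{prop:pathwise_identity} and Tonelli,
\[
\int_0^T\langle g_s,V_s\rangle\,ds=\int_0^T\langle g_s,\phi_s\rangle\,ds+\int_{(0,T]\times E\times\Rp}z\,G_g(u,w)\,\mu(du,dw,dz),
\]
and $\psi_g=G_g$, so the exponential formula gives \eqref{eq:spacetime_transform}.

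For $\lambda_1>0$ we define a frozen candidate process. For $s\in[0,T]$ let $\xi_s(r,y)$ be as in \eqref{eq:frozen_field}, for $r\in[s,T]$, and set
\[
\Gamma_s:=\exp\Bigl(-\int_0^s\langle g_r,V_r\rangle\,dr-\int_s^T\langle g_r,\xi_s(r,\cdot)\rangle\,dr-\int_s^T\int_E(\lambda_0+\lambda_1\xi_s(r,y))F(\psi_g(r,y))\,m(dy)\,dr\Bigr)\in[0,1].
\]
Then $\Gamma_0$ is the right-hand side of \eqref{eq:spacetime_transform}. Also $\Gamma_T=\exp(-\int_0^T\langle g_r,V_r\rangle dr)$ almost surely, because $\xi_T$ plays no role.

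Finiteness is easier than in the fixed-time case, since $\psi_g\le C:=e^{\eta T}C_g/(1-\lambda_1\overline z\widetilde J_\eta)$ is bounded and $G_g\le C_g$. Thus $\E\int z\psi_g\,d\mu\le \overline z(\lambda_0+\lambda_1M)Cm(E)T$, and $F(\psi_g)\le\overline zC$. This gives a full-measure event $\Omega_0$ on which the pathwise identity \eqref{eq:pathwise_identity} holds and all the relevant integrals are finite.

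The central algebraic step is the analogue of \eqref{eq:Gamma_exponential}:
\[
\Gamma_s=\Gamma_0\exp\Bigl(\int_{(0,s]}\chi\,d\mu-\int_{(0,s]}(e^\chi-1)\,d\nu\Bigr),\qquad \chi(u,w,z):=-z\,\psi_g(u,w).
\]
To verify it, expand $\xi_s$ via Tonelli. The $\mu$-atoms at times $u\le s$ then contribute
\[
z\Bigl[\int_s^T\int_E g_r(x)K(r-u,x,w)\,m(dx)\,dr+\lambda_1\int_s^T\int_E K(r-u,y,w)F(\psi_g(r,y))\,m(dy)\,dr\Bigr].
\]
By \eqref{eq:backward_riccati} at $(u,w)$, this equals $z\psi_g(u,w)$ minus the same integrals taken over $r\in(u,s]$. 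Integrating these $(u,s]$ remainders against $\mu$ and using \eqref{eq:pathwise_identity} turns them into
\[
\int_0^s\langle g_r,V_r-\phi_r\rangle\,dr+\lambda_1\int_0^s\int_E(V_r-\phi_r)F(\psi_g(r,y))\,m(dy)\,dr.
\]
The first piece combines with $-\int_0^s\langle g_r,V_r\rangle dr$ and the $\phi$-terms to leave exactly $\log\Gamma_0$. The second piece, together with the identity $\int_{(0,s]}(1-e^\chi)\,d\nu=\int_0^s\int_E(\lambda_0+\lambda_1V_r)F(\psi_g(r,y))\,m(dy)\,dr$ coming from \eqref{eq:affine_compensator}, produces the compensator term. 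All subtractions are of finite quantities on $\Omega_0$.

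The martingale step is then verbatim Step 4 of the proof of Theorem~\ref{thm:spatial_transform}. The process $L=(e^\chi-1)\star(\mu-\nu)$ is a purely discontinuous local martingale with jumps in $(-1,0]$, using Lemma~\ref{lem:distinct_times}. Its stochastic exponential $\mathcal E(L)=\Gamma/\Gamma_0$ is bounded by $\Gamma_0^{-1}$, hence a true martingale, and so $\E[\Gamma_T]=\Gamma_0$.

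I expect the main obstacle to be the bookkeeping in the exponential identity. The source $g$ now contributes running terms $\int_0^s\langle g_r,V_r\rangle dr$ that must cancel precisely against the $(u,s]$-part of $G_g$. One also has to make sure that the strict-past convention and the boundary $\{u=r\}$ are null for $dr$, so that Tonelli exchanges are legitimate pathwise on $\Omega_0$.
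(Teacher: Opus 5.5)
Your proposal is correct and follows essentially the same route as the paper's proof. It uses the same frozen-field candidate $\Gamma_s$, the backward Riccati--Volterra equation at $(u,w)$ to obtain the exponential representation, and the bounded stochastic-exponential martingale argument, with the boundedness of $\psi_g$ simplifying the integrability step; your separate treatment of $\lambda_1=0$ is harmless but unnecessary, since the general argument already covers that case.
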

\begin{proof}
  The proof follows Theorem~\ref{thm:spatial_transform}. Note that boundedness of $\psi_g$ simplifies the integrability step.

  Since $\E[\int_{(0,T]\times E\times\Rp}z\,d\mu]=\overline{z}\int_0^T\int_E(\lambda_0+\lambda_1\E[V_u(w)])\,m(dw)\,du\leq\overline{z}(\lambda_0+\lambda_1M)Tm(E)<\infty$ by the compensator identity, the integrals
  \[
  \int_{(0,T]\times E\times\Rp} z\,G_g(u,w)\,d\mu \leq C_g\int z\,d\mu, \qquad \int_{(0,T]\times E\times\Rp} z\,\psi_g(u,w)\,d\mu \leq C_\psi\int z\,d\mu
  \]
  are almost surely finite, where $C_\psi:=\sup\psi_g<\infty$. Consequently, the same holds for 
  \[
  \int_0^T\int_EV_r(y)F(\psi_g(r,y))\,m(dy)\,dr\leq \overline{z}C_\psi\int_0^T\int_EV_r(y)\,m(dy)\,dr,
  \]
  whose expectation is bounded by $\overline{z}C_\psi MTm(E)$. Let $\Omega_0$ be the almost-sure event on which these three integrals are finite and the pathwise identity \eqref{eq:pathwise_identity} holds for all $(r,y)$.

  For $s\in[0,T]$ define $\xi_s(r,y)$ by \eqref{eq:frozen_field} for $(r,y)\in[s,T]\times E$, and set
  \[
  \begin{aligned}
  \Gamma_s := \exp\biggl(-\int_0^s\langle g_r,V_r\rangle\,dr &- \int_s^T\langle g_r,\xi_s(r,\cdot)\rangle\,dr \\
  &- \int_s^T\int_E\left(\lambda_0+\lambda_1\xi_s(r,y)\right)F\bigl(\psi_g(r,y)\bigr)\,m(dy)\,dr\biggr)\in[0,1].
  \end{aligned}
  \]
  On $\Omega_0$, all three exponent terms are finite. To see this, apply Tonelli's theorem and \eqref{eq:pathwise_identity}, to obtain
  \begin{equation}\label{eq:st_first_terms}
  \begin{aligned}
  \int_0^s\langle g_r,V_r\rangle\,dr + \int_s^T\langle g_r,\xi_s(r,\cdot)\rangle\,dr &= \int_0^T\langle g_r,\phi_r\rangle\,dr \\
  &\quad + \int_{(0,s]\times E\times\Rp}z\,G_g(u,w)\,\mu(du,dw,dz).
  \end{aligned}
  \end{equation}
  Splitting $\xi_s=\phi+\int K\,z\,d\mu$ and using the backward Riccati--Volterra equation \eqref{eq:backward_riccati} at $(u,w)$, we find that
  \[
  \begin{aligned}
  \lambda_1\int_s^T\int_EK(r-u,y,w)F\bigl(\psi_g(r,y)\bigr)m(dy)\,dr &= \bigl[\psi_g(u,w)-G_g(u,w)\bigr] \\
  &\quad - \lambda_1\int_u^s\int_EK(r-u,y,w)F\bigl(\psi_g(r,y)\bigr)m(dy)\,dr,
  \end{aligned}
  \]
  where all terms are finite because $\psi_g$ is bounded, and re-applying Tonelli's theorem together with \eqref{eq:pathwise_identity} exactly as in Step~3 of the proof of Theorem~\ref{thm:spatial_transform}, we obtain
  \[
  \begin{aligned}
  \int_s^T\int_E\left(\lambda_0+\lambda_1\xi_s(r,y)\right)F\bigl(\psi_g(r,y)\bigr)m(dy)\,dr &= \int_s^T\int_E(\lambda_0+\lambda_1\phi_r(y))F\bigl(\psi_g(r,y)\bigr)m(dy)\,dr \\
  &\quad + \int_{(0,s]}z\bigl[\psi_g-G_g\bigr](u,w)\,d\mu \\
  &\quad - \lambda_1\int_0^s\int_E\bigl(V_r(y)-\phi_r(y)\bigr)F\bigl(\psi_g(r,y)\bigr)m(dy)\,dr.
  \end{aligned}
  \]
  Adding this to \eqref{eq:st_first_terms} and collecting terms yields, on $\Omega_0$ and for all $s\in[0,T]$, the representation
  \begin{equation}\label{eq:st_Gamma_exponential}
  \Gamma_s = \Gamma_0\exp\left(\int_{(0,s]}\chi_g\,d\mu - \int_{(0,s]}\left(e^{\chi_g}-1\right)d\nu\right), \qquad \chi_g(u,w,z):=-z\,\psi_g(u,w),
  \end{equation}
  with $\Gamma_0 = \exp(-\int_0^T\langle g_r,\phi_r\rangle\,dr-\int_0^T\int_E(\lambda_0+\lambda_1\phi_r(y))F(\psi_g(r,y))\,m(dy)\,dr)$, where we again used $\int_{(0,s]}(1-e^{\chi_g})\,d\nu = \int_0^s\int_E(\lambda_0+\lambda_1V_r(y))F(\psi_g(r,y))\,m(dy)\,dr$. The integrability inputs for the stochastic-exponential argument are
  \[
  \int|e^{\chi_g}-1|\,d\mu\leq C_\psi\int z\,d\mu<\infty,
  \]
  and
  \[
  \int|e^{\chi_g}-1|\,d\nu
  =\int_0^T\int_E(\lambda_0+\lambda_1V_s(y))F(\psi_g(s,y))\,m(dy)\,ds<\infty.
  \]
  Repeating the stochastic-exponential argument from the proof of Theorem~\ref{thm:spatial_transform} shows that the right-hand side of \eqref{eq:st_Gamma_exponential} is $\Gamma_0\mathcal{E}(L)_s$ for $L:=(e^{\chi_g}-1)\star(\mu-\nu)$, and $\mathcal E(L)_s\leq\Gamma_0^{-1}$. Thus $\Gamma$ is a bounded $\Fcal_s^N$-martingale and $\E[\Gamma_T]=\Gamma_0$. Since $\Gamma_T=\exp(-\int_0^T\langle g_r,V_r\rangle\,dr)$, this is \eqref{eq:spacetime_transform}.
\end{proof}

\begin{remark}\label{rem:Gamma_martingale}
  The proof of Theorem~\ref{thm:spacetime_transform} shows more than the identity \eqref{eq:spacetime_transform}. Indeed, the process
  \[
  \Gamma_s = \exp\left(-\int_0^s\langle g_r,V_r\rangle\,dr - \int_s^T\langle g_r,\xi_s(r,\cdot)\rangle\,dr - \int_s^T\int_E\left(\lambda_0+\lambda_1\xi_s(r,y)\right)F\bigl(\psi_g(r,y)\bigr)\,m(dy)\,dr\right),
  \]
  is a bounded $\Fcal_s^N$-martingale on $[0,T]$, which is the key to the conditional transform below.
\end{remark}

\subsection{The conditional transform from the frozen field}\label{subsec:conditional_transform}

For $t\in[0,T]$, define the \emph{frozen forward field}
\begin{equation}\label{eq:forward_curve_lift}
X_t(r,x) := \phi_r(x) + \int_{(0,t]\times E\times\Rp}K(r-u,x,w)\,z\,\mu(du,dw,dz), \qquad (r,x)\in(t,T]\times E.
\end{equation}
The field $X_t$ records the driver and jumps observed by time $t$. It is the input to the future field. For fixed $t$, $X_t=\xi_t$ in \eqref{eq:frozen_field} and is $\Fcal_t^N\otimes\Bcal((t,T])\otimes\Ecal$-measurable by the localization argument of Lemma~\ref{lem:predictability}.

\begin{lemma}\label{lem:conditional_mean}
  Fix $t\in[0,T)$ and define for non-negative $\Fcal_t^N\otimes\Bcal((t,T])\otimes\Ecal$-measurable fields $H$ on $(t,T]\times E$ with values in $[0,\infty]$ the following quantities
  \begin{equation}\label{eq:K_t_operator}
  \begin{aligned}
  b_t(r,x) &:= \overline z\lambda_0\int_t^r\int_EK(r-s,x,y)\,m(dy)\,ds,\\
  (\mathcal{K}_tH)(r,x) &:= \overline z\lambda_1\int_t^r\int_EK(r-s,x,y)\,H(s,y)\,m(dy)\,ds,
  \end{aligned}
  \end{equation}
  and set
  \begin{equation}\label{eq:h_t_neumann}
  h_t := \sum_{n=0}^{\infty}\mathcal{K}_t^n\left(X_t+b_t\right).
  \end{equation}
  Then the following hold.
  \begin{enumerate}
    \item[(i)] The field $h_t$ is $\Fcal_t^N\otimes\Bcal((t,T])\otimes\Ecal$-measurable with values in $[0,\infty]$ and satisfies, for every $(\omega,r,x)\in\Omega\times(t,T]\times E$, the relation
    \begin{equation}\label{eq:conditional_mean_volterra}
    h_t(r,x)=X_t(r,x)+\overline z\int_t^r\int_EK(r-s,x,y)
    \bigl(\lambda_0+\lambda_1h_t(s,y)\bigr)m(dy)\,ds.
    \end{equation}
    \item[(ii)] For every point $(r,x)\in(t,T]\times E$, almost surely $h_t(r,x)=\E[V_r(x)\mid\Fcal_t^N]$. $\E[h_t(r,x)]\leq M$ for every $(r,x)\in(t,T]\times E$, and $h_t(r,x)<\infty$ almost surely for each fixed $(r,x)$.
    \item[(iii)] The field $h_t$ is the unique solution of \eqref{eq:conditional_mean_volterra} in the following class: if $\widetilde{h}$ is non-negative, $\Fcal_t^N\otimes\Bcal((t,T])\otimes\Ecal$-measurable, has uniformly bounded first moments $\sup_{(r,x)}\E[\widetilde{h}(r,x)]<\infty$, and solves \eqref{eq:conditional_mean_volterra} almost surely for every $(r,x)\in(t,T]\times E$, then $\widetilde{h}(r,x)=h_t(r,x)$ almost surely for every $(r,x)\in(t,T]\times E$.
  \end{enumerate}
\end{lemma}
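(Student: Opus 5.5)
The plan is to treat (i) as a pathwise statement about a Neumann series of non-negative operators, (ii) as a conditional version of the first-moment computation in Proposition~\ref{prop:first_moment}, and (iii) as a contraction argument in a weighted $L^1$-norm, as in Theorem~\ref{thm:Psi_contraction}.

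\emph{Part (i).} Since $X_t+b_t\ge0$ and is $\Fcal_t^N\otimes\Bcal((t,T])\otimes\Ecal$-measurable, Tonelli's theorem shows that each iterate $\mathcal{K}_t^n(X_t+b_t)$ is non-negative and measurable with respect to the same $\sigma$-algebra. The series \eqref{eq:h_t_neumann} is therefore a well-defined $[0,\infty]$-valued measurable field.

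Applying $\mathcal{K}_t$ to the partial sums and using monotone convergence gives
\[
h_t=(X_t+b_t)+\mathcal{K}_th_t
\]
at every $(\omega,r,x)$, which is exactly \eqref{eq:conditional_mean_volterra}. No finiteness is needed, because all terms are non-negative.

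\emph{Part (ii).} For $r>t$, split the integral in \eqref{eq:strong_solution} into the parts over $(0,t]$ and $(t,r)$. Using Proposition~\ref{prop:pathwise_identity} and $\mu(\{t\}\times E\times\Rp)=0$ almost surely, this gives
\[
V_r(x)=X_t(r,x)+\int_{(t,r)}K(r-s,x,y)\,z\,\mu(ds,dy,dz).
\]
For $A\in\Fcal_t^N$, the integrand $\mathbf 1_A\mathbf 1_{(t,r)}(s)K(r-s,x,y)z$ is predictable, and $X_t(r,x)$ is $\Fcal_t^N$-measurable. The compensator identity of Proposition~\ref{prop:affine_compensator} then yields
\[
\E[V_r(x)\mid\Fcal_t^N]=X_t(r,x)+\overline z\int_t^r\int_EK(r-s,x,y)\bigl(\lambda_0+\lambda_1\E[V_s(y)\mid\Fcal_t^N]\bigr)\,m(dy)\,ds .
\]
To write this, a jointly measurable version $m_t(s,y)$ of $(s,y)\mapsto\E[V_s(y)\mid\Fcal_t^N]$ is needed, so that the conditional expectation can be exchanged with the $ds\,m(dy)$-integral. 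I would obtain it by taking the optional-projection-type kernel version via a regular conditional distribution, or more simply by working with a $\Prob\otimes ds\otimes m$-version, using Tonelli to commute. Fixing this version is the main technical obstacle.

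With that version in hand, $m_t$ has $\E[m_t(s,y)]\le M$ and solves \eqref{eq:conditional_mean_volterra} almost surely at every $(r,x)$. Iterating the equation $N$ times, as in Proposition~\ref{prop:first_moment}, gives
\[
m_t=\sum_{n=0}^{N}\mathcal{K}_t^n(X_t+b_t)+\mathcal{K}_t^{N+1}m_t .
\]
The remainder has expectation at most $M\bigl(\overline z\lambda_1\int_0^Te^{-\eta r}k(r)\,dr\bigr)^{N+1}e^{\eta T}$, which tends to $0$ for large $\eta$. Hence $h_t(r,x)=m_t(r,x)$ almost surely at each $(r,x)$. The bound $\E[h_t(r,x)]\le M$ then follows from the tower property, and this in turn gives $h_t(r,x)<\infty$ almost surely.

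\emph{Part (iii).} Let $\widetilde h$ be a solution in the stated class and set $d:=|\widetilde h-h_t|$. At each $(r,x)$, almost surely,
\[
d(r,x)\le\mathcal{K}_td(r,x).
\]
The exceptional null sets depend on $(r,x)$. This is harmless after taking expectations, because $\mathcal{K}_td(r,x)$ is a pathwise integral and Tonelli gives
\[
\E[d(r,x)]\le\overline z\lambda_1\int_t^r\int_EK(r-s,x,y)\,\E[d(s,y)]\,m(dy)\,ds .
\]
Here $\E[d]$ is bounded, by the moment assumption on $\widetilde h$ and by (ii). The weighted-sup argument of Proposition~\ref{prop:first_moment}, with $\overline z\lambda_1J_\eta(T)<1$, then forces $\E[d(r,x)]=0$ for every $(r,x)$. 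This gives $\widetilde h(r,x)=h_t(r,x)$ almost surely.
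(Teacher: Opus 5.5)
Your proposal is correct. Parts (i) and (iii) are essentially the paper's argument. In (iii), state explicitly that $\mathcal K_t\widetilde h(r,x)$ and $\mathcal K_t h_t(r,x)$ are almost surely finite, because their expectations are, before you subtract the two equations. The paper makes this point.

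Part (ii) takes a genuinely different route.

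\emph{Your route.} You first produce a jointly measurable version $m_t$ of $(s,y)\mapsto\E[V_s(y)\mid\Fcal_t^N]$. You commute the conditional expectation with the $ds\,m(dy)$-integral, and then iterate the resulting random Volterra equation exactly as in Proposition~\ref{prop:first_moment}.

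\emph{The paper's route.} It never constructs such a version. It iterates the split identity pathwise \emph{inside} a single conditional expectation. To do so it introduces the restricted iterates $K_t^{[n]}(r,s,x,y)$ and their mirrored recursion
$K_t^{[n+1]}(r,u,x,w)=\int_u^r\int_E K_t^{[n]}(r,s,x,y)\,K(s-u,y,w)\,m(dy)\,ds$.
At every step the $\mu$-part is then the integral of a \emph{deterministic} function against $\mu$ over $(t,r)$. So the conditional compensator identity is applied to one random variable at a time, and the remainders $R_N$ are shown to decrease to $0$ almost surely.

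\emph{Trade-off.} Your route is conceptually economical: it is the deterministic first-moment argument conditioned on $\Fcal_t^N$. Its price is a measurable-version lemma, which is standard. For example, take $m_t$ to be the conditional expectation of $V$ with respect to $\Fcal_t^N\otimes\Bcal((t,T])\otimes\Ecal$ under the finite measure $\Prob\otimes ds\otimes m$. This suffices because $\mathbf 1_A(\omega)\mathbf 1_{(t,r)}(s)K(r-s,x,y)$ is measurable for that $\sigma$-algebra whenever $A\in\Fcal_t^N$. The paper's route is self-contained at the price of the kernel bookkeeping.

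\emph{One caution.} With such an almost-everywhere version, $m_t$ satisfies \eqref{eq:conditional_mean_volterra} only for $dr\otimes m$-a.e.\ $(r,x)$, not ``almost surely at every $(r,x)$'' as you wrote. You should therefore obtain the identification at \emph{every} point from the one-step identity $\E[V_r(x)\mid\Fcal_t^N]=(X_t+b_t+\mathcal K_t m_t)(r,x)$, which does hold at every $(r,x)$. Once $m_t=h_t$ holds $\Prob\otimes dr\otimes m$-a.e., you have $\mathcal K_t m_t=\mathcal K_t h_t$ pathwise almost surely, and (i) then gives $\E[V_r(x)\mid\Fcal_t^N]=h_t(r,x)$ almost surely for every $(r,x)$.

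Finally, your appeal to $\mu(\{t\}\times E\times\Rp)=0$ is unnecessary. Since $X_t$ integrates over $(0,t]$, the split $(0,r)=(0,t]\cup(t,r)$ is exact for $r>t$.
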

\begin{proof}
  \emph{(i).} The function $b_t$ is Borel measurable by Tonelli's theorem, $X_t$ is measurable as shown above, and $\mathcal{K}_t$ maps non-negative $\Fcal_t^N\otimes\Bcal((t,T])\otimes\Ecal$-measurable fields to fields of the same type, again by Tonelli's theorem. Hence every partial sum of \eqref{eq:h_t_neumann}, and therefore $h_t$, is measurable. Applying $\mathcal{K}_t$ to \eqref{eq:h_t_neumann} and interchanging $\mathcal{K}_t$ with the increasing limit of the partial sums by monotone convergence yields $\mathcal{K}_th_t=\sum_{n\geq1}\mathcal{K}_t^n(X_t+b_t)$, whence $X_t+b_t+\mathcal{K}_th_t=h_t$ pointwise in $[0,\infty]$, which is \eqref{eq:conditional_mean_volterra}.

  \emph{(ii).} Fix $(r,x)\in(t,T]\times E$. On the almost-sure event of Proposition~\ref{prop:pathwise_identity}, splitting \eqref{eq:pathwise_identity} at $t$ gives, simultaneously for all $(s,y)\in(t,T]\times E$,
  \begin{equation}\label{eq:split_identity}
  V_s(y) = X_t(s,y) + \int_{(t,s)\times E\times\Rp}K(s-u,y,w)\,z\,\mu(du,dw,dz).
  \end{equation}
  We use the following conditional form of the compensator identity: for every non-negative Borel function $H$ on $(t,T]\times E\times\Rp$ and every bounded non-negative $\Fcal_t^N$-measurable $G$, the integrand $G(\omega)\,\mathbf{1}_{(t,r)}(u)\,H(u,w,z)$ is $\Pcal\otimes\Bcal(\Rp)$-measurable, so the compensator identity from the proof of Proposition~\ref{prop:affine_compensator} gives $\E[G\int_{(t,r)}H\,d\mu]=\E[G\int_{(t,r)}H\,d\nu]$ and hence
  \begin{equation}\label{eq:conditional_compensator}
  \E\left[\int_{(t,r)\times E\times\Rp}H\,d\mu\Bigm\vert\Fcal_t^N\right] = \E\left[\int_{(t,r)\times E\times\Rp}H\,d\nu\Bigm\vert\Fcal_t^N\right] \qquad\text{almost surely, in }[0,\infty].
  \end{equation}
  Define the restricted kernel iterates $K_t^{[1]}(r,s,x,y):=K(r-s,x,y)$ and
  \[
  K_t^{[n+1]}(r,s,x,y):=\int_s^r\int_EK(r-v,x,w)\,K_t^{[n]}(v,s,w,y)\,m(dw)\,dv, \qquad t<s<r\leq T,
  \]
  so that $(\mathcal{K}_t^nH)(r,x)=(\overline z\lambda_1)^n\int_t^r\int_EK_t^{[n]}(r,s,x,y)\,H(s,y)\,m(dy)\,ds$. By Tonelli's theorem, the same iterates satisfy the mirrored recursion
  \[
  K_t^{[n+1]}(r,u,x,w)=\int_u^r\int_EK_t^{[n]}(r,s,x,y)\,K(s-u,y,w)\,m(dy)\,ds,
  \]
  and by induction $\int_EK_t^{[n]}(r,s,x,y)\,m(dy)\leq k^{\star n}(r-s)$, cf.~\eqref{eq:convolution_estimates}. We claim that for every $N\in\N_0$, almost surely,
  \begin{equation}\label{eq:conditional_iteration}
  \E\left[V_r(x)\mid\Fcal_t^N\right] = \sum_{n=0}^{N}\mathcal{K}_t^n(X_t+b_t)(r,x) + R_N,
  \end{equation}
  where $R_N := (\overline z\lambda_1)^{N+1}\,\E[\int_t^r\int_EK_t^{[N+1]}(r,s,x,y)\,V_s(y)\,m(dy)\,ds\mid\Fcal_t^N]$. For $N=0$, take conditional expectations in \eqref{eq:split_identity} at $(s,y)=(r,x)$: the summand $X_t(r,x)$ is $\Fcal_t^N$-measurable, and \eqref{eq:conditional_compensator} with $H(u,w,z)=K(r-u,x,w)\,z$ turns the stochastic integral into $\E[\overline z\int_t^r\int_EK(r-s,x,y)(\lambda_0+\lambda_1V_s(y))\,m(dy)\,ds\mid\Fcal_t^N] = b_t(r,x)+R_0$. For the induction step, insert \eqref{eq:split_identity} into $R_N$ and reorder the resulting double integral by Tonelli's theorem, pathwise: the $X_t$-part contributes the $\Fcal_t^N$-measurable term $\mathcal{K}_t^{N+1}X_t(r,x)$, while the $\mu$-part becomes the integral of the deterministic function $H(u,w,z)=(\overline z\lambda_1)^{N+1}K_t^{[N+2]}(r,u,x,w)\,z$ against $\mu$ over $(t,r)\times E\times\Rp$, by the mirrored recursion. Applying \eqref{eq:conditional_compensator} to it, the $\lambda_0$-part of the compensator contributes $\mathcal{K}_t^{N+1}b_t(r,x)$, again by the mirrored recursion, and the $\lambda_1$-part is $R_{N+1}$.

  Fix $\eta\geq0$ with $\theta:=\overline z\lambda_1\int_0^Te^{-\eta v}k(v)\,dv<1$. By the tower property, Tonelli's theorem, \eqref{eq:V_sup_moment_bound} and \eqref{eq:convolution_estimates},
  \[
  \E[R_N]\leq M\,(\overline z\lambda_1)^{N+1}\int_0^Tk^{\star(N+1)}(v)\,dv\leq M\,e^{\eta T}\theta^{N+1}\xrightarrow[N\to\infty]{}0.
  \]
  Fixing versions of the countably many conditional expectations in \eqref{eq:conditional_iteration} on a common almost-sure event, the partial sums increase to $h_t(r,x)$, so the remainders $R_N$ decrease to a non-negative limit whose expectation vanishes. It follows that $R_N\downarrow0$ almost surely and $\E[V_r(x)\mid\Fcal_t^N]=h_t(r,x)$ almost surely. The moment bound follows from $\E[h_t(r,x)]=\E[V_r(x)]\leq M$, and it implies $h_t(r,x)<\infty$ almost surely for each fixed $(r,x)$.

  \emph{(iii).} Let $\widetilde{h}$ be as stated and set $\delta(r,x):=\E[\lvert\widetilde{h}(r,x)-h_t(r,x)\rvert]$, which is measurable by Tonelli's theorem and bounded by $\sup_{(r,x)}\E[\widetilde{h}(r,x)]+M$. For fixed $(r,x)$, the integrals on the right-hand side of \eqref{eq:conditional_mean_volterra} for $\widetilde{h}$ and $h_t$ are almost surely finite because their expectations are finite, so subtracting the two equations and taking expectations yields
  \[
  \delta(r,x)\leq\overline z\lambda_1\int_t^r\int_EK(r-s,x,y)\,\delta(s,y)\,m(dy)\,ds, \qquad (r,x)\in(t,T]\times E.
  \]
  With $\lVert\delta\rVert_{\eta,\infty}:=\sup_{(r,x)\in(t,T]\times E}e^{-\eta r}\delta(r,x)<\infty$, this gives $\lVert\delta\rVert_{\eta,\infty}\leq\overline z\lambda_1J_\eta(T)\,\lVert\delta\rVert_{\eta,\infty}$ with $J_\eta(T)$ as in \eqref{eq:J_def}, and choosing $\eta$ with $\overline z\lambda_1J_\eta(T)<1$ forces $\delta\equiv0$.
\end{proof}

Thus the conditional mean is a measurable functional of $X_t$. The conditional Laplace transform has the same forward-state dependence.

\begin{theorem}\label{thm:conditional_transform}
  Let $g\in B_b([0,T]\times E)_+$, let $\psi_g$ be the unique bounded solution of \eqref{eq:backward_riccati}, and let $t\in[0,T]$. Then, almost surely,
  \begin{equation}\label{eq:conditional_transform}
  \begin{aligned}
  \E\left[\exp\left(-\int_t^T\langle g_s,V_s\rangle\,ds\right)\Bigm\vert\Fcal_t^N\right] = \exp\biggl(-\int_t^T\langle g_r,X_t(r,\cdot)\rangle\,dr \qquad& \\
  - \int_t^T\int_E\left(\lambda_0+\lambda_1X_t(r,y)\right)F\bigl(\psi_g(r,y)\bigr)\,m(dy)\,dr\biggr)&.
  \end{aligned}
  \end{equation}
  By Lemma~\ref{lem:backward_riccati}, the right-hand side depends on $g$ only through its restriction to $[t,T]\times E$.
\end{theorem}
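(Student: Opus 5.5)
The proof will use the bounded martingale $\Gamma$ from Remark~\ref{rem:Gamma_martingale}, associated with the given $g$ on $[0,T]$. By the proof of Theorem~\ref{thm:spacetime_transform}, $\Gamma_s=\Gamma_0\mathcal{E}(L)_s$ on $\Omega_0$ with $0\le\Gamma_s\le1$, so $\Gamma$ is a bounded $(\Fcal_s^N)$-martingale. We also have $\Gamma_T=\exp(-\int_0^T\langle g_r,V_r\rangle\,dr)$. Hence, almost surely,
\[
\E\Bigl[\exp\Bigl(-\int_0^T\langle g_r,V_r\rangle\,dr\Bigr)\Bigm\vert\Fcal_t^N\Bigr]=\Gamma_t .
\]

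Next, factor out the past. On $\Omega_0$ the term $A_t:=\int_0^t\langle g_r,V_r\rangle\,dr$ is finite, and it is $\Fcal_t^N$-measurable: $V$ is predictable by Theorem~\ref{thm:strong_existence}, hence progressively measurable, and we apply Tonelli. Since $e^{A_t}$ is finite, $\Fcal_t^N$-measurable and positive, we can pull $e^{-A_t}$ out of the conditional expectation. We then multiply both sides by $e^{A_t}$; the product rule for conditional expectations of non-negative variables in $[0,\infty]$ justifies this even though $e^{A_t}$ is unbounded. This gives
\[
\E\Bigl[e^{-\int_t^T\langle g_r,V_r\rangle dr}\Bigm\vert\Fcal_t^N\Bigr]=e^{A_t}\Gamma_t .
\]
By the definition of $\Gamma_t$ and the identity $\xi_t=X_t$ on $(t,T]\times E$ (noted after \eqref{eq:forward_curve_lift}), $e^{A_t}\Gamma_t$ is exactly the right-hand side of \eqref{eq:conditional_transform}. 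The single time $r=t$ is a Lebesgue-null set and does not affect the $dr$-integrals. The boundary cases need no separate work: $t=0$ is Theorem~\ref{thm:spacetime_transform}, and $t=T$ is trivial since both sides equal $1$.

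The main obstacle is the first step. Remark~\ref{rem:Gamma_martingale} asserts the martingale property, but the proof of Theorem~\ref{thm:spacetime_transform} only states it. I would justify it concretely as follows. $\mathcal{E}(L)$ is a local martingale, since it is the stochastic exponential of the local martingale $L$ obtained from Proposition~II.1.28 of \cite{JacodShiryaev2003}. It is bounded by $\Gamma_0^{-1}$, and a bounded local martingale is a true martingale by dominated convergence along a localizing sequence. Hence $\Gamma=\Gamma_0\mathcal{E}(L)$ agrees almost surely, for each $s$, with a martingale.

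One further point needs care: that $\Gamma_t$ is $\Fcal_t^N$-measurable, so that the identification holds as random variables and not merely in expectation. This follows from the measurability of $\xi_t=X_t$ stated in Section~\ref{subsec:conditional_transform}, together with the $\Fcal_t^N$-measurability of $A_t$ established above.

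Finally, the remark that the right-hand side depends only on $g|_{[t,T]\times E}$ follows directly from the restriction statement of Lemma~\ref{lem:backward_riccati}: $\psi_g$ on $[t,T]\times E$ is determined by $g$ on that set, and $X_t$ does not involve $g$.
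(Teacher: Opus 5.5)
Your proposal is correct and follows the paper's proof essentially step for step: you condition the bounded martingale $\Gamma$ of Remark~\ref{rem:Gamma_martingale} on $\Fcal_t^N$, pull out the $\Fcal_t^N$-measurable factor $e^{-A_t}$ with $A_t=\int_0^t\langle g_r,V_r\rangle\,dr$, multiply by $e^{A_t}\in(0,\infty)$, and identify $\xi_t=X_t$. Your extra justification of the martingale property ($\Gamma_0\mathcal{E}(L)$ is a bounded local martingale, then dominated convergence along a localizing sequence) is exactly the argument of Step~4 in the proof of Theorem~\ref{thm:spatial_transform}, which the proof of Theorem~\ref{thm:spacetime_transform} invokes.
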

\begin{proof}
  Let $\Gamma$ be the bounded $\Fcal_s^N$-martingale of Remark~\ref{rem:Gamma_martingale}, so that $\E[\Gamma_T\,\vert\,\Fcal_t^N]=\Gamma_t$ almost surely. The random variable $A:=\int_0^t\langle g_r,V_r\rangle\,dr$ is $\Fcal_t^N$-measurable: the restriction of the predictable field $V$ to $\Omega\times[0,t]\times E$ is $\Fcal_t^N\otimes\Bcal([0,t])\otimes\Ecal$-measurable, and joint measurability plus Tonelli's theorem apply; moreover $A<\infty$ almost surely. Writing $\Gamma_T=e^{-A}\exp(-\int_t^T\langle g_r,V_r\rangle\,dr)$ and pulling out the bounded $\Fcal_t^N$-measurable factor $e^{-A}$,
  \[
  \begin{aligned}
  e^{-A}\,\E\left[\exp\left(-\int_t^T\langle g_r,V_r\rangle\,dr\right)\Bigm\vert\Fcal_t^N\right] &= \Gamma_t \\
  &= e^{-A}\exp\biggl(-\int_t^T\langle g_r,X_t(r,\cdot)\rangle\,dr \\
  &\qquad\qquad -\int_t^T\int_E(\lambda_0+\lambda_1X_t(r,y))F\bigl(\psi_g(r,y)\bigr)m(dy)\,dr\biggr),
  \end{aligned}
  \]
  using $\xi_t=X_t$. Multiplying by $e^{A}\in(0,\infty)$ yields \eqref{eq:conditional_transform}.
\end{proof}

\begin{remark}\label{rem:random_phi_connection}
Conditionally on $\Fcal_t^N$, the post-$t$ equation has random driver $X_t$. Thus \eqref{eq:conditional_transform} is the conditional counterpart of \eqref{eq:spacetime_transform}.
\end{remark}

\begin{corollary}\label{cor:exponential_conditional}
  Let $K(r,x,y)=e^{-\varrho r}\kappa(x,y)$ with $\varrho\geq0$ and a bounded Borel function $\kappa:E\times E\to\Rp$, fix $t\in[0,T)$, and assume the driver has the exponential flow property $\phi_r(x)=e^{-\varrho(r-t)}\phi_t(x)$ for all $(r,x)\in[t,T]\times E$. Define $(\kappa^*p)(y):=\int_E\kappa(x,y)\,p(x)\,m(dx)$ and let $p:[t,T]\times E\to\Rp$ be given by
  \begin{equation}\label{eq:p_mild}
  p_s(x) := \int_s^Te^{-\varrho(r-s)}\Bigl(g_r(x)+\lambda_1F\bigl(\psi_g(r,x)\bigr)\Bigr)\,dr, \qquad (s,x)\in[t,T]\times E.
  \end{equation}
  Then $p$ is the unique bounded non-negative Borel solution of the mild backward Riccati equation
  \begin{equation}\label{eq:p_riccati}
  p_s(x) = \int_s^Te^{-\varrho(r-s)}\Bigl(g_r(x)+\lambda_1F\bigl((\kappa^*p_r)(x)\bigr)\Bigr)\,dr,
  \end{equation}
  it satisfies $\psi_g(s,y)=(\kappa^*p_s)(y)$ on $[t,T]\times E$, and almost surely
  \begin{equation}\label{eq:exponential_conditional_transform}
  \begin{aligned}
  \E\left[\exp\left(-\int_t^T\langle g_s,V_s\rangle\,ds\right)\Bigm\vert\Fcal_t^N\right]
  &=\exp\bigl(-A_t-\langle p_t,V_t\rangle\bigr),\\
  A_t&:=\lambda_0\int_t^T\int_E
  F\bigl((\kappa^*p_s)(y)\bigr)\,m(dy)\,ds.
  \end{aligned}
  \end{equation}
\end{corollary}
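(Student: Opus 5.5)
The plan is to reduce everything to Theorem~\ref{thm:conditional_transform} by exploiting the separable structure $K(r,x,y)=e^{-\varrho r}\kappa(x,y)$.

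\emph{Step 1: $\psi_g=\kappa^*p$ and $p$ solves \eqref{eq:p_riccati}.} Since $\kappa$ is bounded and $m(E)<\infty$, the kernel is admissible and satisfies Assumption~\ref{ass:K_transform}. Lemma~\ref{lem:backward_riccati} therefore provides the bounded solution $\psi_g$. The definition \eqref{eq:p_mild} shows that $p$ is bounded, non-negative and Borel, because $F(\psi_g)\le\overline z\sup\psi_g$. Inserting $K(r-s,x,y)=e^{-\varrho(r-s)}\kappa(x,y)$ into $G_g$ and into the integral term of \eqref{eq:backward_riccati}, and applying Tonelli, gives
\[
\psi_g(s,y)=\int_E\kappa(x,y)\int_s^Te^{-\varrho(r-s)}\bigl(g_r(x)+\lambda_1F(\psi_g(r,x))\bigr)\,dr\,m(dx)=(\kappa^*p_s)(y).
\]
This holds for all $s\in[t,T]$. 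Substituting it back into \eqref{eq:p_mild} yields \eqref{eq:p_riccati}.

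\emph{Step 2: uniqueness.} Let $\widetilde p$ be a second bounded non-negative solution of \eqref{eq:p_riccati} and set $\widetilde\psi:=\kappa^*\widetilde p$, which is bounded. Applying $\kappa^*$ to \eqref{eq:p_riccati} shows that $\widetilde\psi$ solves \eqref{eq:backward_riccati} on $[t,T]$. By the restriction and uniqueness part of Lemma~\ref{lem:backward_riccati}, $\widetilde\psi=\psi_g$ on $[t,T]\times E$. The right-hand side of \eqref{eq:p_riccati} then coincides for $\widetilde p$ and $p$, so $\widetilde p=p$.

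\emph{Step 3: the conditional transform.} Start from \eqref{eq:conditional_transform}. The exponential kernel lets the frozen field factor. For $r>t$, Proposition~\ref{prop:pathwise_identity} and the flow property of $\phi$ give
\[
X_t(r,x)=e^{-\varrho(r-t)}\Bigl(\phi_t(x)+\int_{(0,t]}e^{-\varrho(t-u)}\kappa(x,w)z\,d\mu\Bigr).
\]
The bracket equals $V_t(x)$ on the event where $\mu(\{t\}\times E\times\Rp)=0$, which has probability one. The positive-lag convention at $u=t$ is harmless here, since that event excludes an atom at time $t$. Hence $X_t(r,x)=e^{-\varrho(r-t)}V_t(x)$. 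The $\lambda_0$ term of the exponent is directly $A_t$, using $\psi_g=\kappa^*p$. The remaining terms combine by Tonelli into
\[
\int_E V_t(x)\int_t^Te^{-\varrho(r-t)}\bigl(g_r(x)+\lambda_1F(\psi_g(r,x))\bigr)\,dr\,m(dx)=\langle p_t,V_t\rangle.
\]
This yields \eqref{eq:exponential_conditional_transform}.

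The main care point is the case $\varrho=0$ and the endpoint conventions. Existence of $\psi_g$ does not depend on $\varrho>0$, so $\varrho=0$ causes no difficulty. The factorization $X_t(r,\cdot)=e^{-\varrho(r-t)}V_t$ holds almost surely simultaneously in $(r,x)$, because it holds on a single null-complement event, and this is exactly what is needed inside the integral.
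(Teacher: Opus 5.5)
Your proposal is correct and follows essentially the same route as the paper. You insert the separable kernel into $G_g$ and \eqref{eq:backward_riccati} and use Tonelli to get $\psi_g=\kappa^*p$ on $[t,T]\times E$. You obtain uniqueness by passing a second solution $\widetilde p$ to $\kappa^*\widetilde p$ and invoking the uniqueness part of Lemma~\ref{lem:backward_riccati} on $[t,T]$. Finally, you use $\mu(\{t\}\times E\times\Rp)=0$ almost surely together with the pathwise identity to factor $X_t(r,\cdot)=e^{-\varrho(r-t)}V_t$, which collapses the exponent of \eqref{eq:conditional_transform} into $-A_t-\langle p_t,V_t\rangle$.
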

\begin{proof}
  Inserting $K(r-s,x,y)=e^{-\varrho(r-s)}\kappa(x,y)$ into \eqref{eq:future_source} and \eqref{eq:backward_riccati} and interchanging integrals by Tonelli's theorem shows that, on $[t,T]\times E$,
  \[
  \psi_g(s,y) = \int_E\kappa(x,y)\left[\int_s^Te^{-\varrho(r-s)}\Bigl(g_r(x)+\lambda_1F\bigl(\psi_g(r,x)\bigr)\Bigr)dr\right]m(dx) = (\kappa^*p_s)(y)
  \]
  with $p$ as in \eqref{eq:p_mild}. In particular, $p$ is bounded, non-negative, Borel, and solves \eqref{eq:p_riccati}. Conversely, if $\widetilde{p}$ is any bounded non-negative Borel solution of \eqref{eq:p_riccati}, then $\widetilde{\psi}(s,y):=(\kappa^*\widetilde{p}_s)(y)$ is a bounded non-negative solution of \eqref{eq:backward_riccati} on $[t,T]\times E$ (with $g$ restricted accordingly), so $\widetilde{\psi}=\psi_g$ there by Lemma~\ref{lem:backward_riccati}, and then \eqref{eq:p_riccati} returns $\widetilde{p}=p$.

  For the transform formula, $\mu(\lbrace t\rbrace\times E\times\Rp)=0$ almost surely, since the corresponding $\nu$-mass vanishes identically (cf.\ Step~1 of the proof of Theorem~\ref{thm:spatial_transform}). On this event, the exponential form of the kernel and the flow property of $\phi$ give, for all $(r,x)\in(t,T]\times E$,
  \[
  X_t(r,x) = e^{-\varrho(r-t)}\left(\phi_t(x)+\int_{(0,t]}e^{-\varrho(t-u)}\kappa(x,w)\,z\,d\mu\right) = e^{-\varrho(r-t)}\,V_t(x),
  \]
  where the last equality uses the pathwise identity \eqref{eq:pathwise_identity}. Substituting this into \eqref{eq:conditional_transform} and using $\psi_g(r,y)=(\kappa^*p_r)(y)$, the exponent becomes
  \[
  \begin{aligned}
  -\int_t^T\bigl\langle e^{-\varrho(r-t)}g_r,V_t\bigr\rangle\,dr - \lambda_0\int_t^T\int_EF\bigl((\kappa^*p_r)(y)\bigr)m(dy)\,dr \qquad& \\
  - \lambda_1\int_t^T\int_Ee^{-\varrho(r-t)}V_t(y)\,F\bigl((\kappa^*p_r)(y)\bigr)m(dy)\,dr&,
  \end{aligned}
  \]
  and collecting the two $V_t$-terms via \eqref{eq:p_mild} yields $-A_t-\langle p_t,V_t\rangle$.
\end{proof}

\section{Examples}\label{sec:applications}

\subsection{Multivariate Hawkes processes with Volterra kernels}\label{subsec:hawkes}

Let $E=\lbrace1,\ldots,d\rbrace$ be finite and $m$ the counting measure, and write $K_{ij}(r):=K(r,i,j)$ and $V_t(i)$, $\phi_t(i)$ for the components of the field. Admissibility reduces to $K_{ij}\in L^1((0,T])$ for all $i,j$, and Assumption~\ref{ass:K_transform} then holds automatically (Remark~\ref{rem:K_transform_scope}). The stochastic Volterra equation \eqref{eq:strong_solution} becomes the $d$-dimensional system
\[
V_t(i) = \phi_t(i) + \sum_{j=1}^d\int_{(0,t)\times\Rp}K_{ij}(t-s)\,z\,\mu_j(ds,dz), \qquad i=1,\ldots,d,
\]
where $\mu_j:=\mu(\cdot\times\lbrace j\rbrace\times\cdot)$ has compensator $(\lambda_0+\lambda_1V_s(j))\,ds\,\ell(dz)$. Finite total mark mass implies that $\mu_j$ is locally finite. If $\ell((0,\infty))=\infty$, Corollary~\ref{cor:activity_dichotomy} (applied with $B=\lbrace j\rbrace$) ensures that, almost surely, $\mu_j$ has infinitely many atoms in $I$ when $\int_I(\lambda_0+\lambda_1V_s(j))\,ds>0$, and none when the integrated activity vanishes. Thus the system is a multivariate marked Hawkes-type model with possibly singular Volterra kernels. A mark-$z$ jump at node $j$ raises the future state at node $i$ by $K_{ij}(\cdot)z$, and the activity density at node $j$ is affine in $V(j)$.

For the choice $\ell=\delta_1$ (unit marks), $\lambda_0=0$ and $\lambda_1=1$, the accepted-jump measure has finite activity, and the compensator of the counting process $N_j:=\mu_j(\cdot\times\Rp)$ is $V_s(j)\,ds$, so the system reads
\[
V_t(i) = \phi_t(i) + \sum_{j=1}^d\int_{(0,t)}K_{ij}(t-s)\,dN_j(s),
\]
which identifies $V$ as (the predictable version of) the intensity process of a classical multivariate Hawkes process with excitation kernels $K_{ij}$ and baseline $\phi$. General $\ell$, $\lambda_0$ and $\lambda_1$ interpolate between the purely self-excited case and the L\'evy-driven shot-noise case $\lambda_1=0$. The transform formulas are given by the $d$-dimensional Riccati--Volterra system \eqref{eq:riccati_volterra}.
  
\begin{example}[Infinite activity]\label{ex:gamma_levy_measure}
  Consider the gamma-type L\'evy measure $\ell(dz)=a\,z^{-1}e^{-bz}\,dz$ with $a,b>0$. It has infinite total mass and finite first moment $\overline z=a/b$. On intervals of positive integrated activity, the accepted measure is therefore an infinite-activity Hawkes random measure with almost surely infinitely many atoms (Corollary~\ref{cor:activity_dichotomy}). Its L\'evy exponent is
  \[
    F(q) = a\,\log\left(1+\frac{q}{b}\right), \qquad q\geq0,
  \]
  and Theorem~\ref{thm:spatial_transform} yields the joint Laplace transform of $(V_t(1),\ldots,V_t(d))$, while Theorem~\ref{thm:spacetime_transform} yields the Laplace functional of time-integrated node loads. In the scalar case $d=1$ the model is the affine Volterra jump process of Example~\ref{ex:transform_singleton} and, when $K\equiv1$ and $\phi\equiv v_0$, the closed-interval version is a CBI process with branching and immigration mechanisms $-\lambda_1F$ and $\lambda_0F$ in the sign convention of that example.
\end{example}

\begin{example}[Affine self-exciting portfolio loss models]\label{ex:portfolio_credit_risk}
  Affine point processes have been used as top-down models of clustered portfolio losses and default events in, e.g., \cite{ErraisGieseckeGoldberg2010}. In the present interpretation, a node represents a repeatable event category, and $V_t(i)$ is its activity density. The transform formulas below apply directly to activity and integrated activity, but joint transforms involving the accepted-jump measure are not derived here.

  For a two-category example, take the gamma L\'evy measure $\ell$ of Example~\ref{ex:gamma_levy_measure} with $a,b>0$, let $K_{ij}(r)=c_{ij}e^{-\varrho r}$ with $c_{ij}\geq0$ and $\varrho>0$, and take $f=(f_1,f_2)\in\Rp^2$. Writing $\psi_j(r):=\psi_f(r,j)$, the Riccati--Volterra system is equivalent to
  \begin{equation}\label{eq:two_node_riccati}
  \begin{aligned}
  \psi_j'(r)
  &=-\varrho\psi_j(r)
  +\lambda_1a\sum_{i=1}^2c_{ij}\log\left(1+\frac{\psi_i(r)}{b}\right),\\
  \psi_j(0+)&=\sum_{i=1}^2f_i c_{ij},
  \qquad j=1,2,
  \end{aligned}
  \end{equation}
  and the joint transform is therefore
  \begin{equation}\label{eq:two_node_transform}
  \E\left[e^{-f_1V_t(1)-f_2V_t(2)}\right]
  =\exp\left(
  -\sum_{i=1}^2f_i\phi_t(i)
  -a\int_0^t\sum_{j=1}^2
  (\lambda_0+\lambda_1\phi_s(j))
  \log\left(1+\frac{\psi_j(t-s)}{b}\right)ds
  \right).
  \end{equation}
  The off-diagonal coefficients $c_{12}$ and $c_{21}$ quantify cross-excitation: a loss event in one category raises the future activity of the other. Because the gamma L\'evy measure has infinite mass, this specification represents aggregate loss activity.
\end{example}

\subsection{Spatial Hawkes fields with singular response}\label{subsec:rough_hawkes}
Let $(E,d_E)$ be a compact metric space with a finite measure $m$ and let $p_r(x,y)$ be a symmetric Markov transition density with respect to $m$. Consider the kernel
\[
K(r,x,y) = r^{\alpha-1}\,p_r(x,y), \qquad \alpha\in(0,1),
\]
which is admissible by Example~\ref{ex:admissible_kernels} and satisfies Assumption~\ref{ass:K_transform} with $\widetilde{k}=k$ by symmetry (Remark~\ref{rem:K_transform_scope}). An event at location $y$ excites its surroundings with the diffusive profile $p_r(\cdot,y)$ and the time-singular amplitude $r^{\alpha-1}$. The response is integrable in time but unbounded as $r\downarrow0$, so after a jump the field may have an unbounded right-hand response. Both transform formulas apply because the dual majorant $\widetilde{k}(r)=r^{\alpha-1}$ remains integrable. This yields a concrete spatial Hawkes random field with a singular temporal response and diffusive spatial propagation. 

\begin{example}[Spatial-average transform for the diffusive singular field]\label{ex:spatial_average_heat}
Assume in addition that $p_r$ is conservative and symmetric, and take the constant test function $f\equiv\theta$ with $\theta\geq0$. Then
\[
K_f(r,y)=\theta r^{\alpha-1}\int_Ep_r(x,y)m(dx)=\theta r^{\alpha-1}.
\]
The monotone iteration in Lemma~\ref{lem:riccati_volterra} preserves spatially constant functions, and hence the minimal Riccati--Volterra solution is spatially constant, $\psi_f(r,y)=q_\theta(r)$, where
\begin{equation}\label{eq:scalar_fractional_riccati}
q_\theta(r)=\theta r^{\alpha-1}+\lambda_1\int_0^r(r-s)^{\alpha-1}F\bigl(q_\theta(s)\bigr)\,ds,
\qquad r\in(0,T].
\end{equation}
Consequently, the Laplace transform of the spatially aggregated field has the one-dimensional representation
\[
\begin{aligned}
\E\left[\exp\left(-\theta\int_EV_t(x)m(dx)\right)\right]
=\exp\biggl(&-\theta\int_E\phi_t(x)m(dx)\\
&-\int_0^tF\bigl(q_\theta(t-s)\bigr)
\left[\lambda_0m(E)+\lambda_1\int_E\phi_s(y)m(dy)\right]ds\biggr).
\end{aligned}
\]
Thus a genuinely continuum-space model with diffusive propagation reduces, for spatial-average claims, to a scalar weakly singular Riccati--Volterra equation. The Riccati step therefore requires no spatial discretization and only the deterministic spatial integrals of the driver $\phi$ remain to be evaluated.
\end{example}

In what follows, we consider the spatial regularity of the Hawkes type field, which relies on the results of Section~\ref{sec:regularity}.  
\begin{example}[Separable singular kernels with H\"older modifications]\label{ex:separable_holder}
  Fix $p\geq2$. Let $(E,d_E)$ be compact with $m(E)<\infty$ and covering numbers $N(E,\varepsilon)\leq c\,\varepsilon^{-D}$ for $\varepsilon\in(0,1]$, and let
  \[
  K(r,x,y) = r^{\alpha-1}\,G(x,y), \qquad \alpha\in(0,1),
  \]
  with a bounded Borel function $G:E\times E\to\Rp$ satisfying, for some $\beta\in(0,1]$ and constants $C_q\geq0$,
  \[
  \left\|G(x,\cdot)-G(x',\cdot)\right\|_{L^q(m)} \leq C_q\,d_E(x,x')^\beta, \qquad x,x'\in E,\ q\in\lbrace1,2,p\rbrace.
  \]
  Assume that $\phi$ satisfies \eqref{eq:phi_modulus}, that $\int_0^\infty z^p\ell(dz)<\infty$, and that
  \[
  \alpha>1-\frac1p \qquad\text{and}\qquad \beta p>D.
  \]
  Then $k_q(r)\leq \lVert G\rVert_\infty^q\,m(E)\,r^{-q(1-\alpha)}$ and \eqref{eq:K_Lq_modulus} holds with $h_q(r)=C_q\,r^{\alpha-1}$. Since $q(1-\alpha)\leq p(1-\alpha)<1$ for $q\in\lbrace1,2,p\rbrace$, we obtain $k_2,k_p\in L^1((0,T])$ and $h_q\in L^q((0,T])$ for all three exponents, so Propositions~\ref{prop:p_moments} and~\ref{prop:increment_moments} and Theorem~\ref{thm:holder_modification} apply. Consequently, for every $t\in[0,T]$, the spatial field $x\mapsto V_t(x)$ admits a modification that is H\"older continuous of every order $\gamma<\beta-D/p$. A concrete admissible tuple is $D=1$, $p=4$, $\beta=1$, $\alpha=4/5$: then $p(1-\alpha)=4/5<1$ and $\beta p=4>1=D$, and, provided $\overline{z}_4<\infty$ and $\phi$ is spatially Lipschitz uniformly in time, the field admits fixed-time spatial modifications of every H\"older order below $3/4$. Finally, $\widetilde{k}(r)=r^{\alpha-1}\sup_y\int_EG(x,y)\,m(dx)\leq \lVert G\rVert_\infty m(E)\,r^{\alpha-1}$, so Assumption~\ref{ass:K_transform} holds automatically for bounded $G$ and finite $m$, and the transform formulas of Sections~\ref{sec:spatial_transform} and~\ref{sec:spacetime_transform} apply to the same kernel.
\end{example}

Example~\ref{ex:separable_holder} constructs a field with a time-singular kernel, but spatially H\"older continuous paths, as a consequence of the spatial increment bounds on $G$, the temporal integrability conditions, and the assumed spatial regularity of $\phi$. The situation is different for a time-diagonal heat kernel, whose spatial concentration becomes stronger as the lag tends to zero in which case we are not able to obtain a spatially H\"older continuous modification via the results of Section~\ref{sec:regularity}. For example, consider the flat torus $E=\mathbb{T}^d=\mathbb{R}^d/\mathbb{Z}^d$ with its periodized Gaussian heat kernel $p_r$, represented as
\[
p_r(x,y)=\sum_{k\in\mathbb Z^d}(4\pi r)^{-d/2}
\exp\left(-\frac{|x-y+k|^2}{4r}\right), \qquad x,y\in \mathbb{R}^d.
\]
In order to assess spatial regularity via the results of Section~\ref{sec:regularity}, we require bounds on the spatial increments of the field, which are controlled by spatial increments of the kernel. Note that for $0<r\leq 1$, termwise Gaussian integration yields the estimate $\lVert p_r(x,\cdot)\rVert_{L^q}^q\asymp r^{-d(q-1)/2}$. Hence we find that
\[
k_q(r) \asymp r^{-q(1-\alpha)-d(q-1)/2}\quad(r\downarrow0), \qquad\text{so that}\qquad k_q\in L^1((0,T])\iff q(1-\alpha)+\frac{d(q-1)}{2}<1.
\]
Since the exponent is increasing in $q$, the single condition $p(1-\alpha)+d(p-1)/2<1$ ensures $k_2,k_p\in L^1$, and Proposition~\ref{prop:p_moments} yields uniform $p$-th moment bounds whenever $\int_0^\infty z^p\ell(dz)<\infty$; for example, for $d=1$ and $p=2$ this requires $\alpha>3/4$. Note then that the following bounds also hold
\begin{align*}
\left\|p_r(x,\cdot)-p_r(x',\cdot)\right\|_{L^q} &\leq 2\sup_{\xi}\left\|p_r(\xi,\cdot)\right\|_{L^q},\\
\left\|p_r(x,\cdot)-p_r(x',\cdot)\right\|_{L^q} &\leq d_E(x,x')\,\sup_{\xi}\left\|\nabla_xp_r(\xi,\cdot)\right\|_{L^q},
\end{align*}
where the second follows from the fundamental theorem of calculus along a torus geodesic. Differentiating the periodized Gaussian term by term and integrating the Gaussian derivative gives $\sup_\xi\lVert\nabla_xp_r(\xi,\cdot)\rVert_{L^q}\lesssim r^{-1/2-d(1-1/q)/2}$, while the preceding size estimate gives $\sup_\xi\lVert p_r(\xi,\cdot)\rVert_{L^q}\lesssim r^{-d(1-1/q)/2}$; see also \cite[Chapters~2--3]{Davies1989}. Interpolation yields, for $q\geq1$ and $\beta\in(0,1]$,
\begin{equation}\label{eq:heat_kernel_Lq_bound}
\left\|p_r(x,\cdot)-p_r(x',\cdot)\right\|_{L^q}\leq C_{q,\beta}\,d_E(x,x')^\beta r^{-\beta/2-d(1-1/q)/2},\qquad 0<r\leq T.
\end{equation}
Thus \eqref{eq:K_Lq_modulus} holds with
$h_q(r)=C_{q,\beta}r^{\alpha-1-\beta/2-d(1-1/q)/2}$, and $h_q\in L^q((0,T])$ precisely when
\[
q\left(1-\alpha+\frac{\beta}{2}\right)+\frac{d(q-1)}{2}<1.
\]
This requirement is, however, incompatible with the covering condition $\beta p>d$ of Theorem~\ref{thm:holder_modification}: since $\alpha<1$, the condition $\beta p>d$ forces
\[
p\left(1-\alpha+\frac{\beta}{2}\right)+\frac{d(p-1)}{2}>\frac{p\beta}{2}+\frac{d(p-1)}{2}>\frac{d}{2}+\frac{d(p-1)}{2}=\frac{dp}{2}\geq1
\]
for $p\geq2$ and $d\geq1$, so that $h_p\notin L^p((0,T])$ whenever $\beta p>d$. The Kolmogorov-Chentsov type criterion of Theorem~\ref{thm:holder_modification} therefore cannot yield a spatially H\"older field modification for the time-diagonal heat kernel $r^{\alpha-1}p_r$, since the temporal singularity and spatial concentration leave no admissible exponent range. For $q=1$, the bound \eqref{eq:heat_kernel_Lq_bound} gives an integrable modulus $h_1(r)=Cr^{\alpha-1-\beta/2}$ whenever $0<\beta\leq1$ and $\beta<2\alpha$. If $\phi$ satisfies \eqref{eq:phi_modulus} with this exponent, Theorem~\ref{thm:holder_in_mean} therefore yields spatial $\beta$-H\"older continuity in mean. The first-moment construction and transform formulas remain applicable.

\subsection{Frozen forward-state representation}\label{subsec:forward_curves}
Consider the case where $V_t(x)$ is interpreted as the time-$t$ level of a non-negative random surface indexed by $x\in E$. This includes the portfolio setting in Example~\ref{ex:portfolio_credit_risk}, with $E$ representing a continuum of interconnected markets or risk classifications. In that case, $V_t(x)$ represents an activity density. The frozen field $X_t(r,x)$ in \eqref{eq:forward_curve_lift} records the driver and the jumps observed by time $t$ and is the \emph{input} to the conditional-mean equation in Lemma~\ref{lem:conditional_mean}. Indeed, the conditional mean $h_t(r,x)=\E[V_r(x)\mid\Fcal_t^N]$ solves \eqref{eq:conditional_mean_volterra} and due to \eqref{eq:h_t_neumann}, it is a measurable functional of $X_t$.

Let $g$ be a bounded non-negative function on $[0,T]\times E$. The conditional Laplace transform
\[
\E\left[\exp\left(-\int_t^T\int_Eg_s(x)\,V_s(x)\,m(dx)\,ds\right)\Bigm\vert\Fcal_t^N\right],
\]
is given by Theorem~\ref{thm:conditional_transform}. For exponential kernels, under the driver flow assumption of Corollary~\ref{cor:exponential_conditional}, the transform is exponentially affine in $V_t$ whereas for general kernels it depends on the full frozen field $X_t$. Such forward-state representations underlie Markovian lifts in affine Volterra theory \cite{AbiJaberLarssonPulido2019,BondiLivieriPulido2024,CuchieroTeichmann2020} and are widely used for pricing problems in finance. A pricing interpretation, however, requires an additional pricing measure and asset model, which we do not pursue here.

\subsection{A predictable branching type representation}
  Let $E$ be compact, let $m$ be finite, and let $(P_r)_{r\geq0}$ be a conservative symmetric Markov semigroup with transition density $p_r(x,y)$ with respect to $m$. Suppose that $(r,x,y)\mapsto p_r(x,y)$ is Borel measurable for $r>0$ and set $K(r,x,y)=p_r(x,y)$. Conservativity and symmetry give
  \[
  \sup_{r\in(0,T],x\in E}\int_Ep_r(x,y)m(dy)=1,
  \qquad
  \sup_{r\in(0,T],y\in E}\int_Ep_r(x,y)m(dx)=1,
  \]
  so the row and dual-column assumptions hold. Define
  \[
  (P_rf)(x):=\int_Ep_r(x,y)f(y)m(dy),
  \qquad \langle P_r^\ast M,f\rangle:=\langle M,P_rf\rangle.
  \]
  For $V_0\in B_b(E)_+$, set $M_0(dx)=V_0(x)m(dx)$, $\phi_0=V_0$, and
  \[
  \phi_t(x)=P_tV_0(x)=\int_Ep_t(x,y)V_0(y)m(dy),\qquad t>0.
  \]
  Then $\phi$ is bounded and non-negative and the predictable random measure
  \[
  M_t^{\mathrm{pred}}(dx):=V_t(x)m(dx),
  \]
  is finite for each fixed $t$, almost surely. Under our strict-past convention, \eqref{eq:pathwise_identity} is equivalent to
  \[
  M_t^{\mathrm{pred}}=P_t^\ast M_0+
  \int_{(0,t)\times E\times\Rp}zP_{t-s}^\ast\delta_y\,\mu(ds,dy,dz),
  \]
  where the accepted-jump measure has compensator
  \[
  \nu(ds,dy,dz)=\bigl(\lambda_0m(dy)+\lambda_1M_s^{\mathrm{pred}}(dy)\bigr)ds\,\ell(dz).
  \]
  This accepted-jump measure identity concerns the predictable representative used in the thinning equation. A measure-valued c\`adl\`ag realization, when available, is instead naturally written with a closed-interval stochastic convolution and contains the atom created at the current time. The two conventions agree at every deterministic time almost surely, because the driving Poisson random measure has diffuse time intensity, but they need not agree at random jump times. We therefore do not identify $M^{\mathrm{pred}}$ with a c\`adl\`ag density-valued branching process. The displayed compensator and mild identity nevertheless exhibit the branching-with-immigration mechanism at the predictable level.

  This connection also appears at the transform level. Indeed, for $f\in B_b(E)_+$, the Riccati-Volterra equation becomes the mild log-Laplace equation
  \[
  u_t=P_tf+\lambda_1\int_0^tP_{t-s}\bigl[F(u_s)\bigr]ds,
  \qquad u_t(y):=\psi_f(t,y)\quad(t>0),
  \]
  with $u_0=f$, distinct from the auxiliary convention $\psi_f(0,\cdot)=0$ used in stochastic integrals, and Theorem~\ref{thm:spatial_transform} reduces to
  \[
  \E\left[e^{-\langle f,M_t^{\mathrm{pred}}\rangle}\right]
  =\exp\left(-\langle u_t,M_0\rangle
  -\lambda_0\int_0^t\langle F(u_s),m\rangle ds\right),
  \]
  which has the standard branching-with-immigration log-Laplace form (compare with \cite{DawsonLi2012}).

\section{Spatial moments and regularity}\label{sec:regularity}
We now give sufficient conditions for spatial regularity of $V_t(x)$. Fix a metric $d_E$ compatible with the topology of $E$. These results are independent of the transform theory. The criteria apply in particular to singular kernels with non-concentrating spatial factors, as illustrated in Section~\ref{subsec:rough_hawkes}; for time-diagonal heat-type kernels, Theorem~\ref{thm:holder_modification} is only a sufficient criterion and need not be sharp. Proposition~\ref{prop:increment_moments} in Appendix~\ref{app:spatial_regularity} gives the corresponding higher-moment increment estimate used for H\"older modifications.

In order to derive estimates on spatial increments, we require higher order mark moments and for $q\geq 1$ we therefore introduce the notation
\[
\overline{z}_q := \int_0^\infty z^q\,\ell(dz) \qquad\text{and}\qquad k_q(r):=\sup_{x\in E}\int_EK(r,x,y)^q\,m(dy),
\]
so that $\overline{z}_1 = \overline{z}$ and $k_1=k$, and we note that $\overline{z}<\infty$ together with $\overline{z}_p<\infty$ implies $\overline{z}_q<\infty$ for all $q\in[1,p]$, since $z^q\leq z+z^p$.

\begin{theorem}\label{thm:holder_in_mean}
  Assume that there exist $\beta\in(0,1]$, a non-negative $h\in L^1([0,T])$ and a constant $C_\phi\geq 0$ such that, for all $x,x'\in E$,
  \begin{align}
  \int_E \lvert K(r,x,y)-K(r,x',y)\rvert\,m(dy) &\leq h(r)\,d_E(x,x')^\beta, \qquad r\in(0,T],\label{eq:K_L1_modulus}\\
  \sup_{t\in[0,T]}\,\lvert \phi_t(x)-\phi_t(x')\rvert &\leq C_\phi\,d_E(x,x')^\beta.\label{eq:phi_modulus}
  \end{align}
  Then, with $M:=\sup_{(t,x)\in[0,T]\times E}\E[V_t(x)]<\infty$ as in \eqref{eq:V_sup_moment_bound}, it holds that
  \[
  \sup_{t\in[0,T]}\E\left[\lvert V_t(x)-V_t(x')\rvert\right] \leq \left(C_\phi+\overline{z}\left(\lambda_0+\lambda_1M\right)\int_0^Th(r)\,dr\right)d_E(x,x')^\beta, \qquad x,x'\in E.
  \]
\end{theorem}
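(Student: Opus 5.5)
We use the strong solution identity \eqref{eq:strong_solution} at the two deterministic points $(t,x)$ and $(t,x')$, which holds almost surely for each. Subtracting the two instances, the thinning indicator $\mathbf{1}_{\{u\le\lambda_0+\lambda_1V_s(y)\}}$ is the same in both, because it depends only on the jump location $y$ and not on the evaluation point. Hence, almost surely,
\[
V_t(x)-V_t(x')=\phi_t(x)-\phi_t(x')+\int_{(0,t)\times E\times\Rp\times\Rp}\bigl(K(t-s,x,y)-K(t-s,x',y)\bigr)\,z\,\mathbf{1}_{\{u\le\lambda_0+\lambda_1V_s(y)\}}\,N(ds,dy,dz,du).
\]
Each of the two stochastic integrals is almost surely finite, since its expectation is bounded using $M$ and admissibility. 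The difference is therefore well defined.

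The pathwise triangle inequality bounds the absolute value of the integral by the integral of $\lvert K(t-s,x,y)-K(t-s,x',y)\rvert\,z\,\mathbf{1}_{\{\cdots\}}$ against $N$. This integrand is non-negative and $\Pcal\otimes\Bcal(\Rp)\otimes\Bcal(\Rp)$-measurable, because the kernel part is deterministic and $V$ is predictable. We then take expectations and apply the Campbell formula \eqref{eq:campbell} together with $\int_0^\infty\mathbf{1}_{\{u\le a\}}\,du=a$ and $\int z\,\ell(dz)=\overline z$. This gives
\[
\E\lvert V_t(x)-V_t(x')\rvert\le \lvert\phi_t(x)-\phi_t(x')\rvert+\overline z\int_0^t\int_E\lvert K(t-s,x,y)-K(t-s,x',y)\rvert\bigl(\lambda_0+\lambda_1\E[V_s(y)]\bigr)\,m(dy)\,ds .
\]

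Next we bound $\E[V_s(y)]\le M$, which holds pointwise by \eqref{eq:V_sup_moment_bound}. For the remaining integral, Tonelli, the change of variables $r=t-s$ and \eqref{eq:K_L1_modulus} give
\[
\int_0^t\int_E\lvert K(r,x,y)-K(r,x',y)\rvert\,m(dy)\,dr\le d_E(x,x')^\beta\int_0^T h(r)\,dr .
\]
Combining this with \eqref{eq:phi_modulus} and taking the supremum over $t$ yields the claimed bound.

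The only delicate point is justifying the subtraction and the domination. The subtraction needs both integrals finite, which holds almost surely, and it takes place on a common null-set exclusion for the two fixed points. The domination must use the same representative $V$ inside the indicator for both points, which we do. No contraction argument or moment beyond the first is needed.
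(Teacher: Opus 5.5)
Your proposal is correct and follows the paper's proof essentially step for step. You subtract the two instances of \eqref{eq:strong_solution}, using that both stochastic integrals are almost surely finite, and bound the difference pathwise by the $N$-integral of $\lvert K(t-s,x,y)-K(t-s,x',y)\rvert\,z$ times the common thinning indicator. You then take expectations via \eqref{eq:campbell} and conclude with $\E[V_s(y)]\le M$, \eqref{eq:K_L1_modulus} and \eqref{eq:phi_modulus}.
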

\begin{proof}
  Fix $t\in[0,T]$ and $x,x'\in E$ and subtract the two instances of \eqref{eq:strong_solution}. Since both stochastic integrals are almost surely finite (see the proof of Lemma~\ref{lem:L1_estimate}), we obtain the pathwise bound
  \[
  \begin{aligned}
  \lvert V_t(x)-V_t(x')\rvert &\leq \lvert \phi_t(x)-\phi_t(x')\rvert \\
  &+ \int_{(0,t)\times E\times\Rp\times\Rp}
  \lvert K(t-s,x,y)-K(t-s,x',y)\rvert\,z\\
  &\hspace{5em}\times\mathbf{1}_{\lbrace u\leq\lambda_0+\lambda_1V_s(y)\rbrace}(u)
  \,N(ds,dy,dz,du).
  \end{aligned}
  \]
  Taking expectations and applying formula \eqref{eq:campbell} yields
  \[
  \begin{aligned}
  \E\left[\lvert V_t(x)-V_t(x')\rvert\right]
  \leq{}& C_\phi\,d_E(x,x')^\beta\\
  &+\overline{z}\int_0^t\int_E
  \lvert K(t-s,x,y)-K(t-s,x',y)\rvert
  \left(\lambda_0+\lambda_1\E[V_s(y)]\right)m(dy)\,ds,
  \end{aligned}
  \]
  and the claim follows from \eqref{eq:K_L1_modulus} and $\E[V_s(y)]\leq M$.
\end{proof}

\begin{theorem}\label{thm:holder_modification}
  Let $p\geq 2$ and assume that $\overline{z}_p<\infty$ and that $k_2$ and $k_p$ are Borel measurable with $k_2,k_p\in L^1([0,T])$. Suppose in addition that \eqref{eq:phi_modulus} holds and that there exist a $\beta\in(0,1]$ and non-negative functions $h_q\in L^q([0,T])$, $q\in\lbrace1,2,p\rbrace$, such that for all $x,x'\in E$,
  \[
  \left(\int_E\lvert K(r,x,y)-K(r,x',y)\rvert^q\,m(dy)\right)^{1/q} \leq h_q(r)\,d_E(x,x')^\beta, \qquad r\in(0,T],\ q\in\lbrace1,2,p\rbrace.
  \]
  Finally, suppose that $(E,d_E)$ is compact and that there exist constants $c,D>0$ such that the covering numbers of $E$ satisfy $\mathcal{N}(E,\varepsilon)\leq c\,\varepsilon^{-D}$ for all $\varepsilon\in(0,1]$, where $\mathcal{N}(E,\varepsilon)$ denotes the minimal number of closed $d_E$-balls of radius $\varepsilon$ needed to cover $E$, and that $\beta p>D$. Then, for every $t\in[0,T]$, the spatial field $x\mapsto V_t(x)$ admits a modification which is almost surely $\alpha$-H\"older continuous for every $\alpha\in\left(0,\beta-D/p\right)$ and this modification can be chosen simultaneously for all such $\alpha$.
\end{theorem}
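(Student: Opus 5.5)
The plan is to prove a $p$-th moment increment bound of Kolmogorov type,
\[
\sup_{t\in[0,T]}\E\bigl[\lvert V_t(x)-V_t(x')\rvert^p\bigr]\le C\,d_E(x,x')^{\beta p},\qquad x,x'\in E,
\]
and then apply a Kolmogorov--Chentsov criterion on the compact metric space $(E,d_E)$ with polynomial covering numbers. The argument has three steps.

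\emph{Step 1: uniform moments.} I first invoke Proposition~\ref{prop:p_moments}, whose hypotheses are exactly $\overline z_p<\infty$ and $k_2,k_p\in L^1$. It gives $M_p:=\sup_{(t,x)}\E[V_t(x)^p]<\infty$. By Jensen, it also gives $\sup\E[V_t(x)^q]<\infty$ for every $q\in[1,p]$.

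\emph{Step 2: the increment bound.} This is the content of Proposition~\ref{prop:increment_moments}. To prove it, fix $t$ and $x,x'$ and set
\[
D(s,y,z,u):=\lvert K(t-s,x,y)-K(t-s,x',y)\rvert\,z\,\mathbf 1_{\{u\le\lambda_0+\lambda_1V_s(y)\}}.
\]
By \eqref{eq:phi_modulus}, we have pathwise
\[
\lvert V_t(x)-V_t(x')\rvert\le C_\phi d_E(x,x')^\beta+\int D\,dN.
\]
I then split $\int D\,dN=\int D\,d\Lambda_N+\int D\,d(N-\Lambda_N)$, with predictable integrand.
\begin{itemize}
\item[(a)] The compensator term equals $\overline z\int_0^t\int_E\lvert K(t-s,x,y)-K(t-s,x',y)\rvert(\lambda_0+\lambda_1V_s(y))\,m(dy)\,ds$. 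Its $L^p$ norm I bound by Minkowski's integral inequality, Hölder in $y$ with exponents $(p,p')$, and $\lVert V_s(\cdot)\rVert_{L^p}$ moments. This gives $\lesssim\int_0^T h_{p'}(r)\,dr\,d^\beta$. Since $p'\le 2\le p$ and $m(E)<\infty$, one has $h_{p'}\lesssim h_p$, or more simply one uses $h_1$ together with Jensen on the normalized measure $h$-weighted. So the $q=1$ modulus suffices here.
\item[(b)] The compensated term I handle by the Bichteler--Jacod / BDG-type inequality for Poisson integrals:
\[
\E\Bigl\lvert\int D\,d\tilde N\Bigr\rvert^p\lesssim\E\Bigl(\int D^2\,d\Lambda_N\Bigr)^{p/2}+\E\int D^p\,d\Lambda_N.
\]
There is a technical point: the integrand depends on the terminal time $t$. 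I deal with it by freezing $t$ and applying the inequality to the martingale $s\mapsto\int_{(0,s]}D\,d\tilde N$ at $s=t$. The second term equals $\overline z_p\int\lvert\Delta K\rvert^p(\lambda_0+\lambda_1\E V)\,dm\,ds\lesssim\lVert h_p\rVert_{L^p}^p d^{\beta p}$. In the first term I insert $(\lambda_0+\lambda_1V)$, use Minkowski in $L^{p/2}$, and bound by $\bigl(\overline z_2\int_0^T h_2(r)^2(\lambda_0+\lambda_1M_{p/2}^{2/p})\,dr\bigr)^{p/2}d^{\beta p}$.
\end{itemize}
Collecting (a) and (b) gives the claimed bound, uniformly in $t$.

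\emph{Step 3: Kolmogorov--Chentsov.} Fix $t$ and apply the chaining version of the criterion on compact metric spaces with $\mathcal N(E,\varepsilon)\le c\varepsilon^{-D}$. The version I have in mind is Kolmogorov's continuity theorem via dyadic nets, or the Garsia--Rodemich--Rumsey route. From $\E\lvert X_x-X_{x'}\rvert^p\le C d^{\beta p}$ with $\beta p>D$, it yields a modification that is $\gamma$-Hölder for all $\gamma<\beta-D/p$. The construction goes as follows:
\begin{itemize}
\item choose $2^{-n}$-nets $E_n$ with $\lvert E_n\rvert\le c2^{nD}$, take their union $S$, which is countable and dense, and bound the maximal increments between neighbouring net points at scale $n$ by a union bound and Markov's inequality;
\item apply Borel--Cantelli to get, for each $\gamma<\beta-D/p$, a random constant with $\lvert V_t(x)-V_t(x')\rvert\le C_\gamma d^\gamma$ on $S$;
\item extend by continuity, and identify the extension with $V_t(x)$ almost surely at each $x$ using the $L^p$ (indeed $L^1$) continuity from Step 2.
\end{itemize}
Taking a sequence $\gamma_k\uparrow\beta-D/p$ and a single extension from $S$ gives simultaneity in $\alpha$.

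The main obstacle I expect is Step 2(b). Since the integrand depends on $t$, there is no martingale in $t$, and the BDG inequality must be applied at fixed $t$ in the auxiliary variable $s$. One also has to track that the random factor $\lambda_0+\lambda_1V_s(y)$ inside the $L^{p/2}$-norm of the quadratic term is controlled using only $k_2$ and $k_p$ integrability through Step 1.
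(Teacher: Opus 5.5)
Your Steps 1 and 2 are correct and are essentially the paper's Propositions~\ref{prop:p_moments} and~\ref{prop:increment_moments}. You majorize the difference pathwise by $\lvert K(t-s,x,y)-K(t-s,x',y)\rvert$ and compensate with respect to $N$, whereas the paper splits the signed difference in the $(\mu,\nu)$-representation \eqref{eq:drift_martingale_form}. Both give the same three bounds: the drift term via $h_1$, the quadratic term via $h_2$, and the $p$-th term via $h_p$.

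The gap is in Step 3. The paper does not run its own chaining. It verifies the hypotheses of \cite[Theorem~1.1]{KratschmerUrusov2022}, a Kolmogorov--Chentsov theorem for arbitrary totally bounded index spaces that needs only a covering-number bound (after extending $\mathcal N(E,\varepsilon)\le c\varepsilon^{-D}$ from $(0,1]$ to $(0,\Delta(E)]$). Your net argument does not reach the stated range in that generality. A union bound over ``neighbouring net points at scale $n$'' involves only $O(2^{nD})$ pairs if each ball of radius $C2^{-n}$ contains boundedly many $2^{-n}$-separated points. That is a doubling-type property, and the covering hypothesis does not provide it.

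Here is an example. Let $E$ consist of a point $o$ together with, for each $k\in\N$, a cluster of $2^{kD}$ points at mutual distance $2^{-k}$ and at distance $3\cdot2^{-k}$ from $o$; distances between different clusters pass through $o$.
\begin{itemize}
\item This compact space satisfies $\mathcal N(E,\varepsilon)\le c\varepsilon^{-D}$.
\item Yet every $2^{-n}$-net contains $2^{(n-2)D}$ points that are pairwise within $6\cdot2^{-n}$, so there are of order $2^{2nD}$ neighbouring pairs.
\item Markov's inequality plus the union bound then require $\sum_n2^{n(2D-\beta p+\gamma p)}<\infty$, i.e.\ $\gamma<\beta-2D/p$.
\item This range is empty unless $\beta p>2D$, which is strictly stronger than the assumed $\beta p>D$.
\end{itemize}

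Keeping only parent--child links $u\mapsto\pi_n(u)$, of which there are $\lvert E_{n+1}\rvert$, does not fix this. Two nearby points whose chains differ must still be joined by a same-level cross link, and it is the cross links that are counted quadratically. The textbook Garsia--Rodemich--Rumsey route has the same defect unless you have a reference measure with matching upper and lower volume bounds.

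You therefore have two options:
\begin{itemize}
\item add a doubling assumption on $(E,d_E)$; or
\item replace your chaining by a result that genuinely needs only covering numbers, such as the Kr\"atschmer--Urusov theorem used in the paper.
\end{itemize}
A majorizing-measure H\"older estimate, with a probability measure giving mass proportional to $2^{-n\delta}\lvert E_n\rvert^{-1}$ to each point of $E_n$ (so that balls of radius $r\le 1$ have mass $\gtrsim r^{D+\delta}$), is another possible route.

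Once almost-sure H\"older bounds on a countable dense set are available, the rest of your Step 3 is fine. This covers the extension by continuity, the identification with $V_t(x)$ via $L^p$-continuity, and simultaneity in $\alpha$ from a single extension off $S$. That last step is slightly more direct than the paper's merging of modifications.
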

\begin{proof}
  We verify the hypotheses of \cite[Theorem~1.1]{KratschmerUrusov2022} with parameter space $\Theta=E$, state space $\mathcal{X}=\R$, moment order $p$, increment exponent $q:=\beta p$ for the parameter-space distance, and covering exponent $D$ (denoted $t$ in \cite{KratschmerUrusov2022}, but we avoid that letter here, since $t$ is the fixed time of the statement). Since $q>D$ and $E$ is compact, it is totally bounded with finite diameter $\Delta(E)$, and the covering-number condition of \cite[Eq.~(1)]{KratschmerUrusov2022} holds for all $\varepsilon\in(0,\Delta(E)]$ with the constant $c\,(1\vee\Delta(E))^{D}$. For $\varepsilon\le 1$ this is the assumption, and for $\varepsilon\in(1,\Delta(E)]$ we have $\mathcal{N}(E,\varepsilon)\leq \mathcal{N}(E,1)\le c\le c\,(1\vee \Delta(E))^D\varepsilon^{-D}$. The measurability condition \cite[Eq.~(2)]{KratschmerUrusov2022} is automatic since $\R$ is separable and the moment condition \cite[Eq.~(3)]{KratschmerUrusov2022} is the bound $\E[\lvert V_t(x)-V_t(x')\rvert^p]\leq C\,d_E(x,x')^{\beta p}$ of Proposition~\ref{prop:increment_moments}. Since $\R$ is complete, \cite[Theorem~1.1]{KratschmerUrusov2022} yields an $\alpha$-H\"older modification for each $\alpha\in(0,(q-D)/p)$. To obtain one modification for all such exponents, choose $\alpha_n\uparrow(q-D)/p$ and corresponding continuous modifications $V^{(n)}_t$. On a fixed countable dense subset of $E$, all $V^{(n)}_t$ agree almost surely with the original field and hence with each other. Intersecting this event with the countably many events on which $V^{(n)}_t$ is $\alpha_n$-H\"older still gives a common event of probability one and continuity makes the modifications identical on all of $E$ there. Any one of these versions, completed on the null complement with a fixed continuous function, therefore has the $\alpha_n$-H\"older property for every $n$ and is $\alpha$-H\"older for every $\alpha<(q-D)/p=\beta-D/p$.
\end{proof}

\section*{Disclosure statement}
The authors declare no conflicts of interest.

\section*{Funding}
T. K. Kloster gratefully acknowledges financial support from the Center of Research in Energy: Economics and Markets and The Danish Council of Independent Research under DFF grant 10.46540/5247-00005B.

Sven Karbach gratefully acknowledges support by the University of Amsterdam’s
interfaculty Research Priority Area \emph{Energy Transition through
the Lens of Sustainable Development Goals} (ENLENS).

\appendix

\section{Measurability lemmas}\label{app:measurability}
\begin{proof}[Proof of Lemma~\ref{lem:distinct_times}]
  The intensity of $\lbrace0\rbrace\times E\times\Rp\times\Rp$ is zero, so $N$ does not charge time zero almost surely. The disjoint mark bands
  \[
  I_a^-=(1/(a+1),1/a],\qquad I_a^+=(a,a+1],\qquad a\in\N,
  \]
  exhaust $(0,\infty)$, and the intervals $J_b=[b,b+1)$, $b\in\N_0$, partition $\Rp$. The products $I_a^\pm\times J_b$ therefore form a countable, disjoint partition of $(0,\infty)\times\Rp$ into sets of finite $\ell\otimes du$-measure. We enumerate these sets as $(Q_i)_{i\in\N}$. The restriction $N_i$ of $N$ to $[0,T]\times E\times Q_i$ has finite total intensity $\Lambda_i:=T\,m(E)\,(\ell\otimes du)(Q_i)$, and restrictions to distinct cells are independent.

  Fix $i,j\in\N$ and $K\in\N$, and partition $(0,T]$ into $K$ intervals of length $T/K$. If an atom of $N_i$ and a distinct atom of $N_j$ share a time coordinate, then some interval carries two atoms of $N_i$ when $i=j$, or an atom of each of $N_i$ and $N_j$ when $i\neq j$. The elementary Poisson bounds $\Prob(\mathrm{Poi}(\lambda)\geq1)\leq\lambda$ and $\Prob(\mathrm{Poi}(\lambda)\geq2)\leq\lambda^2$, together with independence for $i\neq j$, bound the probability of this event by $\Lambda_i\Lambda_j/K$ (and by $\Lambda_i^2/K$ when $i=j$). Letting $K\to\infty$ proves that the event has probability zero for each pair $(i,j)$. Taking the countable union over all pairs completes the proof.
\end{proof}

\begin{proof}[Proof of Lemma~\ref{lem:predictability}]
  Fix $n\in\N$ and restrict $N$ to $[0,T]\times E\times(\tfrac1n,n]\times[0,n]$. The resulting random measure $N_n$ has finite intensity $T\,m(E)\,\ell((\tfrac1n,n])\,n$ and is therefore almost surely a finite sum of Dirac measures. Its time marginal is atomless, so its atoms almost surely have distinct time coordinates.

  Because $[0,T]\times E\times(\tfrac1n,n]\times[0,n]$ is a standard Borel space, finite point measures on it admit a measurable enumeration and we may order the atoms of $N_n$ as $(\sigma_i,Y_i,Z_i,U_i)_{i\in\N}$. The finite-valued times are strictly increasing almost surely, since jumps happen at distinct time coordinates almost surely cf. Lemma~\ref{lem:distinct_times}. If fewer than $i$ atoms occur, set $\sigma_i=\infty$ and $(Y_i,Z_i,U_i)=(y_0,0,0)$ for a fixed $y_0\in E$. Each $\sigma_i$ is then a stopping time since
  \[
  \{\sigma_i\leq t\}=\{N_n((0,t]\times E\times(\tfrac1n,n]\times[0,n])\geq i\}\in\Fcal_t^N.
  \]
  For every Borel set $A\subseteq E\times(\tfrac1n,n]\times[0,n]$, the event
  $
  \{\sigma_i\leq t,\ (Y_i,Z_i,U_i)\in A\}
  $
  is a measurable function of the restriction of $N_n$ to $(0,t]\times E\times(\tfrac1n,n]\times[0,n]$ and hence belongs to $\Fcal_t^N$. By the definition of the stopping-time $\sigma$-algebra, $(Y_i,Z_i,U_i)$ is therefore $\Fcal_{\sigma_i}^N$-measurable.

  Next, define the random variable $W_i$ by setting it equal to $0$ on $\{\sigma_i=\infty\}$ and to $W(\sigma_i,Y_i,Z_i,U_i)$ on $\{\sigma_i<\infty\}$. We show that $W_i$ is $\Fcal_{\sigma_i}^N$-measurable. For $W$ of the product form
  \[
  W(\omega,s,y,z,u) = Y(\omega)\mathbf{1}_{(a,b]}(s)\mathbf{1}_{A}(y)\mathbf{1}_{B}(z)\mathbf{1}_{C}(u),
  \]
  with $Y$ bounded and $\Fcal_a^N$-measurable, the claim follows from the $\Fcal_{\sigma_i}^N$-measurability of $Y\mathbf{1}_{\{a<\sigma_i\}}$ and of the marks. Predictable generators supported on $\lbrace0\rbrace$ contribute nothing because $N_n$ has no atom at time zero almost surely. A functional monotone-class argument, first for indicators and then for bounded non-negative functions, extends the claim to every $\Pcal\otimes\Bcal(\Rp)\otimes\Bcal(\Rp)$-measurable $W$ with values in $[0,1]$.

  Now define the random field
  \[
  \begin{aligned}
  \Psi^n_t(x) &:= \int_{(0,t)\times E\times \Rp\times \Rp}K(t-s,x,y)\,z\,W(s,y,z,u)\,N_n(ds,dy,dz,du) \\
  &\phantom{:}= \sum_{i:\,\sigma_i<t}K(t-\sigma_i,x,Y_i)\,Z_i\,W_i,
  \end{aligned}
  \]
  and fix $i\in\N$. The set $D_i := \lbrace (\omega,t,x): \sigma_i(\omega)<t\rbrace$ is predictable, and on $D_i$ the three mappings
  \begin{enumerate}[i]
    \item $(\omega,t,x)\mapsto t-\sigma_i(\omega)$, 
    \item $(\omega,t,x)\mapsto x$,
    \item $(\omega,t,x)\mapsto Y_i(\omega)$,
  \end{enumerate}
   are $\Pcal$-measurable. (i) follows since the map extended by $0$ on the complement of $D_i$ is left-continuous in $t$ and adapted. (ii) follows since it is measurable with respect to $\Pcal_{[0,T]}\otimes \Ecal$, with $\Pcal_{[0,T]}$ being the restriction of $\Pcal$ to the time interval $[0,T]$. (iii) follows since, for every $A\in\Ecal$, the field $\mathbf{1}_{\{Y_i\in A\}}\mathbf{1}_{\{\sigma_i<t\}}$ is of the form $Z\mathbf{1}_{(\sigma_i,T]}(t)$ with $Z$ $\Fcal_{\sigma_i}^N$-measurable, hence predictable. Since $E$ is Polish, the Borel $\sigma$-algebra of $(0,T]\times E\times E$ is the product of the Borel $\sigma$-algebras, so that the mapping 
   \[
   \Theta_i:=(t-\sigma_i,\,x,\,Y_i): D_i\to (0,T]\times E\times E,
   \]
   is measurable ($t-\sigma_i\in(0,T]$ on $D_i$). Thus, $\mathbf{1}_{D_i}\,(K\circ \Theta_i)$ is a predictable random field, and multiplying it with the predictable field $Z_iW_i\mathbf{1}_{(\sigma_i,T]}(t)$ shows that each summand of $\Psi^n$ -- hence $\Psi^n$ itself -- is predictable.

  Finally, letting $n\to\infty$ we have $(\tfrac1n,n]\times[0,n]\uparrow (0,\infty)\times\Rp$, and since the factor $z$ vanishes on $\lbrace z=0\rbrace$, monotone convergence yields $\Psi^n_t(x)\uparrow \Psi_t(x)$ for every $(\omega,t,x)$. Predictability is preserved under pointwise limits, which completes the proof.
\end{proof}

\section{Spatially inhomogeneous affine coefficients}\label{app:inhomogeneous_coefficients}

In the next proposition, we show that the constant coefficients used throughout the paper are not essential for the theory to hold.

\begin{proposition}\label{prop:inhomogeneous_coefficients}
Let $\lambda_0,\lambda_1\in B_b(E)_+$ and replace every thinning indicator in the construction by
\[
\mathbf 1_{\{u\leq \lambda_0(y)+\lambda_1(y)X_s(y)\}}.
\]
Then the existence and uniqueness theorem, the compensator and activity statements, the first- and higher-moment bounds, and the fixed-time, integrated, and conditional transform formulas remain correct with the following modifications.
\begin{enumerate}[(i)]
\item The contraction constant in Theorem~\ref{thm:Psi_contraction} is $\overline z\lVert\lambda_1\rVert_\infty J_\eta(T)$, while the affine bound contains $\lVert\lambda_0\rVert_\infty$. Hence one chooses $\eta$ such that $\overline z\lVert\lambda_1\rVert_\infty J_\eta(T)<1$.
\item The accepted-jump compensator is
\[
\nu(ds,dy,dz)=\bigl(\lambda_0(y)+\lambda_1(y)V_s(y)\bigr)\,ds\,m(dy)\,\ell(dz),
\]
and the first moment solves
\[
 f(t,x)=\phi_t(x)+\overline z\int_0^t\int_EK(t-s,x,y)\bigl(\lambda_0(y)+\lambda_1(y)f(s,y)\bigr)m(dy)\,ds.
\]
For the corresponding resolvent representation define the weighted kernel $K^{(1)}(r,x,y):=K(r,x,y)\lambda_1(y)$ and
\[
g(t,x):=\phi_t(x)+\overline z\int_0^t\int_EK(t-s,x,y)\lambda_0(y)m(dy)\,ds.
\]
Then $R^{(1)}:=\sum_{n\geq1}\overline z^{\,n}(K^{(1)})^{\star n}$ is controlled by the same exponential-weight argument with $\lVert\lambda_1\rVert_\infty k$, and $f=g+R^{(1)}\star g$.
\item Under Assumption~\ref{ass:K_transform}, the fixed-time Riccati--Volterra equation becomes
\[
\psi_f(r,y)=K_f(r,y)+\int_0^r\int_EK(r-s,y',y)\lambda_1(y')F\bigl(\psi_f(s,y')\bigr)m(dy')\,ds.
\]
The monotone solution is bounded by the inhomogeneous dual resolvent obtained from \eqref{eq:dual_resolvent} by replacing $\lambda_1$ with $\lVert\lambda_1\rVert_\infty$. Theorem~\ref{thm:spatial_transform} becomes
\[
\E[e^{-\langle f,V_t\rangle}]
=\exp\left(-\langle f,\phi_t\rangle-\int_0^t\int_E\bigl(\lambda_0(y)+\lambda_1(y)\phi_s(y)\bigr)F\bigl(\psi_f(t-s,y)\bigr)m(dy)\,ds\right).
\]
\item For $g\in B_b([0,T]\times E)_+$, the backward equation becomes
\[
\psi_g(s,y)=G_g(s,y)+\int_s^T\int_EK(r-s,y',y)\lambda_1(y')F\bigl(\psi_g(r,y')\bigr)m(dy')\,dr,
\]
and the integrated and conditional transform formulas are obtained by replacing $\lambda_0+\lambda_1\phi$ and $\lambda_0+\lambda_1X_t$ by $\lambda_0(\cdot)+\lambda_1(\cdot)\phi$ and $\lambda_0(\cdot)+\lambda_1(\cdot)X_t$, respectively.
\end{enumerate}
The same substitutions apply to the moment estimates, with constants depending on $\lVert\lambda_0\rVert_\infty$ and $\lVert\lambda_1\rVert_\infty$.
\end{proposition}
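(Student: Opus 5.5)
The proof re-runs each argument of the main text with the constants $\lambda_0,\lambda_1$ replaced by $\lambda_0(y),\lambda_1(y)$, at the point where the thinning level is integrated against $du$. The basic identity $\int_0^\infty\mathbf 1_{\{u\le a\}}\,du=a$ now reads $\int_0^\infty\mathbf 1_{\{u\le\lambda_0(y)+\lambda_1(y)X_s(y)\}}\,du=\lambda_0(y)+\lambda_1(y)X_s(y)$. Since $y$ is the spatial integration variable of $m(dy)$, it stays inside the $m$-integral, and every bound uses $\lambda_i(y)\le\lVert\lambda_i\rVert_\infty$. Lemma~\ref{lem:predictability} needs no change: the thinning field $\mathbf 1_{\{u\le\lambda_0(y)+\lambda_1(y)X_s(y)\}}$ is still $\Pcal\otimes\Bcal(\Rp)\otimes\Bcal(\Rp)$-measurable, because $\lambda_0,\lambda_1$ are Borel on $E$ and $(\omega,s,y)\mapsto\lambda_i(y)$ is $\Pcal$-measurable.

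\textbf{Steps (i)--(ii).} In Lemma~\ref{lem:L1_estimate} I use $\int_0^\infty|\mathbf 1_{\{u\le a\}}-\mathbf 1_{\{u\le b\}}|\,du=|a-b|$ with $a-b=\lambda_1(y)(X_s(y)-\widetilde X_s(y))$. This gives \eqref{eq:delta_bound} with $\lambda_1$ replaced by $\lVert\lambda_1\rVert_\infty$. Theorem~\ref{thm:Psi_contraction} then holds verbatim with contraction constant $\overline z\lVert\lambda_1\rVert_\infty J_\eta(T)$, and the bound \eqref{eq:Psi_bound} holds with $\lVert\lambda_0\rVert_\infty$ in place of $\lambda_0$. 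Lemma~\ref{lem:pointwise_reconstruction} and Theorem~\ref{thm:strong_existence} follow unchanged.

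The compensator computation in Proposition~\ref{prop:affine_compensator} gives $\nu$ with density $\lambda_0(y)+\lambda_1(y)V_s(y)$. The localization bound holds with $\lVert\lambda_0\rVert_\infty+\lVert\lambda_1\rVert_\infty M$. Corollary~\ref{cor:activity_dichotomy} and Proposition~\ref{prop:pathwise_identity} follow with the same substitution.

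For Proposition~\ref{prop:first_moment}, taking expectations gives the stated linear equation, and uniqueness uses the $\lVert\lambda_1\rVert_\infty$-weighted contraction. For the resolvent I absorb $\lambda_1(y)$ into $K^{(1)}$. Then $\int_E K^{(1)}(r,x,y)\,m(dy)\le\lVert\lambda_1\rVert_\infty k(r)$, so \eqref{eq:convolution_estimates} holds with $k$ replaced by $\lVert\lambda_1\rVert_\infty k$. The Neumann iteration then gives $f=g+R^{(1)}\star g$ exactly as before.

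\textbf{Step (iii).} I redefine the operator $\mathcal T$ of Lemma~\ref{lem:riccati_volterra} with the weight $\lambda_1(y')$ placed inside the $m(dy')$-integral. Monotonicity and the monotone iteration are unaffected. The induction bound holds using $\lambda_1(y')F(q)\le\lVert\lambda_1\rVert_\infty\overline z q$, which gives the majorant $\lVert f\rVert_\infty\widetilde\rho$ with $\lambda_1$ replaced by $\lVert\lambda_1\rVert_\infty$. Uniqueness in the dominated class follows by the same Lipschitz estimate.

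In the proof of Theorem~\ref{thm:spatial_transform}, Steps 1, 2 and 4--5 carry over with $\lambda_0+\lambda_1\xi_s$ replaced by $\lambda_0(y)+\lambda_1(y)\xi_s(r,y)$. The point to check is Step 3. There the function $I_s(u,w)$ must now be defined as $\int_s^t\int_E K(r-u,y,w)\lambda_1(y)F(\psi_f(t-r,y))\,m(dy)\,dr$, with no outside factor $\lambda_1$. The Riccati--Volterra equation at $(t-u,w)$ then gives exactly $\psi_f-K_f$ minus the piece over $(u,s]$. The Tonelli reordering against $\mu$ produces $\int_0^s\int_E\lambda_1(y)(V_r(y)-\phi_r(y))F(\psi_f(t-r,y))\,m(dy)\,dr$. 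This matches the $\lambda_1$-part of $\int(1-e^\chi)\,d\nu$ under the new compensator, and that matching is the only cancellation the identity \eqref{eq:Gamma_exponential} requires.

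\textbf{Step (iv) and the main obstacle.} The same relocation of $\lambda_1(y)$ works in the backward Lemma~\ref{lem:backward_riccati}, whose bound uses $\lVert\lambda_1\rVert_\infty$. It also works in Theorem~\ref{thm:spacetime_transform}, in the martingale $\Gamma$ of Remark~\ref{rem:Gamma_martingale}, and hence in Theorem~\ref{thm:conditional_transform}. Lemma~\ref{lem:conditional_mean} works with $\mathcal K_t$ weighted by $\lambda_1(y)$, $b_t$ weighted by $\lambda_0(y)$, and iterates of $K^{(1)}$.

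The moment estimates of Section~\ref{sec:regularity} and the appendix use the thinning only through expectations of $\lambda_0(y)+\lambda_1(y)V_s(y)$ or its powers. Those are bounded by the corresponding expressions with sup-norms. For instance, Theorem~\ref{thm:holder_in_mean} holds with $\lVert\lambda_0\rVert_\infty+\lVert\lambda_1\rVert_\infty M$.

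I expect the main obstacle to be the bookkeeping in Step 3 of the transform proofs. There one must make sure that $\lambda_1(\cdot)$ is evaluated at the \emph{target} location $y'$ of the Riccati equation, which is the spatial argument of the compensator density, and not at the source location $w$. The placement of $\lambda_1(y')$ inside the integral in (iii) and (iv) is chosen exactly so that this pairing holds.
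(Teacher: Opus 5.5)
Your proposal is correct and takes the same approach as the paper. You integrate the modified thinning indicator over $u$, which gives the coupling constant $\lVert\lambda_1\rVert_\infty$ and the compensator density $\lambda_0(y)+\lambda_1(y)V_s(y)$; you bound every estimate by sup-norms; and you place $\lambda_1(y')$ inside the dual Riccati--Volterra integrals so that the Step~3 cancellation against the new compensator goes through, with your explicit redefinition of $I_s$ (weight $\lambda_1(y)$ inside the integral) supplying exactly the bookkeeping the paper's short proof leaves implicit.
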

\begin{proof}
For the coupling estimate, integrate the difference of the two thinning indicators over the auxiliary mark $u$ and use
\[
\int_0^\infty\left|\mathbf 1_{\{u\leq\lambda_0(y)+\lambda_1(y)a\}}-\mathbf 1_{\{u\leq\lambda_0(y)+\lambda_1(y)b\}}\right|du
=\lambda_1(y)|a-b|
\leq\lVert\lambda_1\rVert_\infty|a-b|.
\]
All remaining existence and moment estimates then follow with sup-norm bounds on the coefficients. The compensator identity follows by integrating the thinning indicator over $u$. In the transform proofs, the term proportional to the propagated field at location $y'$ carries the multiplier $\lambda_1(y')$. Inserting this multiplier into the dual Riccati equations gives exactly the cancellations used in Theorems~\ref{thm:spatial_transform} and~\ref{thm:spacetime_transform}. The remaining arguments are unchanged after these substitutions.
\end{proof}

\section{Proof of Theorem~\ref{thm:strong_existence} and Corollary~\ref{cor:activity_dichotomy}}
\begin{lemma}\label{lem:H_eta_Banach}
  For every $\eta\geq 0$, the pair $(\mathcal{H}_\eta,\lVert\cdot\rVert_\eta)$ is a Banach space and $\mathcal{H}_\eta^+$ is a closed subset of $\mathcal{H}_\eta$.
\end{lemma}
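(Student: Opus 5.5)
The plan is to identify $\mathcal{H}_\eta$ with a weighted $L^1$-type space and run the Riesz--Fischer argument for completeness. Closedness of the cone $\mathcal{H}_\eta^+$ then comes from almost-everywhere convergent subsequences. First I check that $\lVert\cdot\rVert_\eta$ is a norm on equivalence classes, as already noted in the text. The weight $e^{-\eta t}$ lies between $e^{-\eta T}$ and $1$, so $\lVert\cdot\rVert_\eta$ is equivalent to $\lVert\cdot\rVert_0$. Hence it suffices to treat $\eta=0$, though nothing changes if the weight is kept.

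For completeness, take a Cauchy sequence $(X^n)$ and pass to a subsequence with $\lVert X^{n_{j+1}}-X^{n_j}\rVert_\eta\le 2^{-j}$. Define the predictable $[0,\infty]$-valued field
\[
S:=\sum_j\lvert X^{n_{j+1}}-X^{n_j}\rvert .
\]
By Tonelli, $\E[S_t(x)]\le e^{\eta T}\sum_j2^{-j}<\infty$ for $dt\otimes m$-a.e. $(t,x)$. Therefore $S<\infty$ holds $\Prob\otimes dt\otimes m$-a.e., using Tonelli again on the product measure, which is finite since $m(E)<\infty$ and $T<\infty$. On the predictable set $\{S<\infty\}$ the series $X^{n_1}+\sum_j(X^{n_{j+1}}-X^{n_j})$ converges. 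Let $X$ be this pointwise limit, set to $0$ off that set; $X$ is predictable as a pointwise limit of predictable fields.

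Next I bound the limit. Fatou's lemma at each $(t,x)$ gives
\[
e^{-\eta t}\E\lvert X_t(x)-X^{n_j}_t(x)\rvert\le\liminf_k e^{-\eta t}\E\lvert X^{n_k}_t(x)-X^{n_j}_t(x)\rvert .
\]
Taking the essential supremum, I need
\[
\esssup\liminf_k(\cdot)\le\liminf_k\esssup(\cdot)\le\sup_{k\ge j}\lVert X^{n_k}-X^{n_j}\rVert_\eta .
\]
This interchange holds because for each $k$ the inequality $e^{-\eta t}\E\lvert X^{n_k}-X^{n_j}\rvert\le\lVert X^{n_k}-X^{n_j}\rVert_\eta$ holds off a null set. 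The countable union of these null sets is still null. Hence $X\in\mathcal{H}_\eta$ and $X^{n_j}\to X$, so the full Cauchy sequence converges. The main technical care is this esssup/Fatou step and ensuring the exceptional sets are countable and predictable. The identification of representatives being modulo $\Prob\otimes dt\otimes m$-null sets makes this harmless.

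For closedness, let $X^n\in\mathcal{H}_\eta^+$ converge to $X$. By the above, a subsequence converges $\Prob\otimes dt\otimes m$-a.e. Indeed, $\sum_j\lvert X^{n_j}-X\rvert$ has finite expectation a.e. whenever $\lVert X^{n_j}-X\rVert_\eta\le 2^{-j}$. Choosing non-negative representatives of the $X^{n_j}$, the limit is non-negative a.e. Then $X^+$, the positive part, is a non-negative predictable representative of $X$, so $X\in\mathcal{H}_\eta^+$.
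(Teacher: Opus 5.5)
Your proof is correct and follows essentially the same Riesz--Fischer argument as the paper: a fast Cauchy subsequence with $\lVert X^{n_{j+1}}-X^{n_j}\rVert_\eta\le 2^{-j}$, almost-everywhere convergence via Tonelli, a Fatou bound passed to the essential supremum, and closedness of $\mathcal{H}_\eta^+$ from an almost-everywhere convergent subsequence. The differences are cosmetic. You build the limit from the absolutely convergent telescoping series on $\{S<\infty\}$ rather than from the paper's truncated $\limsup$, and you spell out the interchange of essential supremum and $\liminf$. Note that Fatou is only needed, and only valid, at $dt\otimes m$-a.e.\ $(t,x)$, not at every $(t,x)$; this is harmless because the essential supremum ignores null sets.
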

\begin{proof}
  Only completeness requires an argument. Let $(X^n)_{n\in\N}$ be a Cauchy sequence in $\mathcal{H}_\eta$ and fix predictable representatives. We may pass to a subsequence with $\lVert X^{n_{k+1}}-X^{n_k}\rVert_\eta\leq 2^{-k}$. Then there exists a $dt\otimes m$-null set outside of which $\sum_{k}\E[\lvert X^{n_{k+1}}_t(x)-X^{n_k}_t(x)\rvert]\leq e^{\eta T}\sum_k 2^{-k}<\infty$, and hence $(X^{n_k}_t(x))_k$ converges almost surely and in $L^1(\Prob)$. The field $X:=\limsup_k X^{n_k}\,\mathbf{1}_{\{\lvert \limsup_k X^{n_k}\rvert<\infty\}}$ is predictable, and by Fatou's lemma it follows that
  \[
  e^{-\eta t}\E[\lvert X_t(x)-X^{n_k}_t(x)\rvert]\leq \sum_{j\geq k}2^{-j},
  \]
  for $dt\otimes m$-a.e. $(t,x)$, so that $X^{n_k}\to X$ in $\mathcal{H}_\eta$. By the Cauchy property, it therefore holds that $X^n\to X$. To show that $\mathcal{H}_\eta^+$ is closed, convergence in $\mathcal{H}_\eta$ implies convergence in $L^1(\Prob\otimes e^{-\eta t}dt\otimes m)$ which yields $\Prob\otimes dt\otimes m$-a.e. convergence along a subsequence, which preserves non-negativity.
\end{proof}

\paragraph{Proof of Theorem~\ref{thm:strong_existence}}
\begin{proof}
  Fix $\eta>0$ with $\overline{z}\lambda_1 J_\eta(T)<1$, let $Y\in\mathcal{H}_\eta^+$ be the fixed point from Theorem~\ref{thm:Psi_contraction}, and let $V=\Psi(Y)$ be the PRM-strong predictable solution furnished by Lemma~\ref{lem:pointwise_reconstruction}. For the moment bound, from \eqref{eq:Psi_bound} and $\Psi(Y)=Y$ in $\mathcal{H}_\eta$ we get $\lVert Y\rVert_\eta \leq \lVert\phi\rVert_\infty + \overline{z}\left(\lambda_0+\lambda_1\lVert Y\rVert_\eta\right)J_\eta(T)$, and hence
  \[
  \lVert Y\rVert_\eta \leq \frac{\lVert\phi\rVert_\infty + \overline{z}\lambda_0 J_\eta (T)}{1-\overline{z}\lambda_1 J_\eta (T)} < \infty.
  \]
  Then, by equation \eqref{eq:campbell}, for every $(t,x)\in [0,T]\times E$,
  \[
  \begin{aligned}
  \E\left[V_t(x)\right] &= \phi_t(x) + \overline{z}\int_0^t\int_E K(t-s,x,y)\left(\lambda_0+\lambda_1\E[Y_s(y)]\right)m(dy)\,ds \\
  &\leq \lVert\phi\rVert_\infty+\overline{z}\left(\lambda_0+\lambda_1 e^{\eta T}\lVert Y\rVert_\eta\right)\int_0^T k(r)\,dr,
  \end{aligned}
  \]
  where we have used that $\E[Y_s(y)]\le e^{\eta T}\lVert Y\rVert_\eta$ for $ds\otimes m$-a.e. $(s,y)$. This yields the uniform bound \eqref{eq:V_sup_moment_bound}. Finally, let $\widetilde{V}$ be as stated. Then $\widetilde{V}$ is predictable with $\lVert \widetilde{V}\rVert_\eta \leq \sup_{(t,x)}\E[\widetilde{V}_t(x)]<\infty$, so its equivalence class belongs to $\mathcal{H}_\eta^+$, and \eqref{eq:strong_solution} states that $\E[\lvert \Psi(\widetilde{V})(t,x)-\widetilde{V}_t(x)\rvert]=0$ for every $(t,x)$, whence $\lVert \Psi(\widetilde{V})-\widetilde{V}\rVert_\eta=0$. Thus $\widetilde{V}$ is a fixed point of $\Psi$ in $\mathcal{H}_\eta^+$, and the uniqueness in Theorem~\ref{thm:Psi_contraction} gives $\widetilde{V}=Y=V$ $\Prob\otimes dt\otimes m$-almost everywhere. For the pointwise assertion, fix $(t,x)\in[0,T]\times E$. Since $V=\widetilde{V}$ holds $\Prob\otimes dt\otimes m$-almost everywhere, the final statement of Lemma~\ref{lem:L1_estimate} yields $\Psi(V)(t,x)=\Psi(\widetilde{V})(t,x)$ almost surely. Combining this with the two solution identities $V_t(x)=\Psi(V)(t,x)$ and $\widetilde{V}_t(x)=\Psi(\widetilde{V})(t,x)$ gives $V_t(x)=\widetilde{V}_t(x)$ almost surely. Finally, for fixed $t$, Tonelli's theorem yields that
  \[
  \E\left[\int_E\mathbf{1}_{\{V_t(x)\neq\widetilde{V}_t(x)\}}\,m(dx)\right]=\int_E\Prob(V_t(x)\neq\widetilde{V}_t(x))\,m(dx)=0,
  \]
  so that, almost surely, $V_t=\widetilde{V}_t$ $m$-almost everywhere.
\end{proof}

\paragraph{Proof of Corollary~\ref{cor:activity_dichotomy}}
\begin{proof}
  Set $C_t:=\mu\bigl((I\cap(0,t])\times B\times\Rp\bigr)$ for $t\in[0,T]$ and $C:=C_T$, and recall from the proof of Proposition~\ref{prop:affine_compensator} the compensator identity $\E[\int H\,d\mu]=\E[\int H\,d\nu]$, valid in $[0,\infty]$ for every non-negative $\Pcal\otimes\Bcal(\Rp)$-measurable $H$.

  If $\ell((0,\infty))<\infty$, the identity applied to $H=\mathbf{1}_{I\times B\times\Rp}$ gives $\E[C]=\ell((0,\infty))\,\E[\xi]<\infty$, so that $C<\infty$ almost surely, which is the second claim.

  Suppose next that $\ell((0,\infty))=\infty$ and fix $n\in\N$. The process $t\mapsto C_t$ is increasing, integer-valued, and adapted, since the restriction of $\mu$ to $(0,t]\times E\times\Rp$ is a measurable function of the restriction of $N$ to $(0,t]\times E\times\Rp\times\Rp$ and of the predictable field $V$ on $[0,t]$. We therefore have that
  \[
  \rho_n:=\inf\lbrace t\in[0,T]: C_t\geq n\rbrace, \qquad(\inf\emptyset:=\infty),
  \]
  satisfies $\lbrace\rho_n\leq t\rbrace=\bigcap_{k\in\N}\lbrace C_{(t+1/k)\wedge T}\geq n\rbrace\in\Fcal_t^N$ by monotonicity of $C$ and right-continuity of the filtration, and hence $\rho_n$ is a stopping time. For every $s<\rho_n$, we have $C_s\leq n-1$. Lemma~\ref{lem:distinct_times} shows that at most one atom of $\mu$ lies at time $\rho_n$. Therefore,
  \[
  \mu\bigl((I\cap(0,\rho_n])\times B\times\Rp\bigr)\leq n
  \qquad\text{almost surely}.
  \]
  The integrand $\mathbf{1}_{(0,\rho_n]}(s)\,\mathbf{1}_{I}(s)\,\mathbf{1}_B(y)\,\mathbf{1}_{(1/j,\infty)}(z)$ is $\Pcal\otimes\Bcal(\Rp)$-measurable because $\mathbf{1}_{(0,\rho_n]}$ is left-continuous and adapted, so the compensator identity yields
  \[
  \ell((1/j,\infty))\;\E\left[\int_{I\cap(0,\rho_n]}\int_B\left(\lambda_0+\lambda_1V_s(y)\right)m(dy)\,ds\right] \leq n, \qquad j\in\N.
  \]
  Since $\ell((1/j,\infty))\uparrow\infty$, letting $j\to\infty$ forces
  \[
  \int_{I\cap(0,\rho_n]}\int_B(\lambda_0+\lambda_1V_s(y))\,m(dy)\,ds=0
  \qquad\text{almost surely}.
  \]
  On $\lbrace C<n\rbrace$, we have $\rho_n=\infty$, and the integral above equals $\xi$. Hence $\Prob(C<n,\xi>0)=0$ for every $n$. Taking the union over $n$ gives
  \[
  \Prob(C<\infty,\xi>0)=0,
  \]
  which proves the third claim.
  
  It remains to prove the first claim for an arbitrary $\ell$. Define
  \[
  g(t):=\int_{I\cap(0,t]}\int_B(\lambda_0+\lambda_1V_s(y))\,m(dy)\,ds,
  \]
  which is continuous, increasing, and adapted. For $\delta>0$, the time $\tau_\delta:=\inf\lbrace t\in[0,T]:g(t)\geq\delta\rbrace$, with $\inf\emptyset:=\infty$, is a stopping time and continuity gives $g(\tau_\delta\wedge T)\leq\delta$. Applying the compensator identity to the predictable integrand $\mathbf{1}_{(0,\tau_\delta]}(s)\,\mathbf{1}_I(s)\,\mathbf{1}_B(y)\,\mathbf{1}_{(1/j,\infty)}(z)$ gives
  \[
  \E\left[\mu\bigl((I\cap(0,\tau_\delta])\times B\times(1/j,\infty)\bigr)\right] \leq \ell((1/j,\infty))\,\delta.
  \]
  On $\lbrace\xi=0\rbrace$, we have $g\equiv0$ and hence $\tau_\delta=\infty$. It follows that
  \[
  \E\left[\mu(I\times B\times(1/j,\infty))\mathbf{1}_{\lbrace\xi=0\rbrace}\right]
  \leq\ell((1/j,\infty))\delta.
  \]
  Since $\ell((1/j,\infty))<\infty$, letting $\delta\downarrow0$ gives
  $\Prob(\xi=0,\mu(I\times B\times(1/j,\infty))>0)=0$. Letting $j\to\infty$ proves $\Prob(\xi=0,C>0)=0$, because $\mu$ does not charge $\lbrace z=0\rbrace$.
\end{proof}

\section{Spatial moment bounds and proof of Theorem~\ref{thm:holder_modification}}\label{app:spatial_regularity}

To bound higher order moments of $V_t(x)$, we rely on the following classical moment inequality for compensated integrals, often referred to as an inequality of Bichteler-Jacod type and due, in the present form, to Novikov \cite{Novikov1975}. See also \cite[Theorem~3.2]{MarinelliRockner2014} for a modern account and \cite{ChongKluppelberg2015} for closely related estimates in the ambit-field setting.

\begin{lemma}\label{lem:kunita}
  Let $p\geq2$, and let $\xi$ be an integer-valued random measure on $(0,T]\times E\times\Rp$ with predictable compensator $\zeta$. Assume that $\zeta$ has no predictable time atoms, i.e., for every predictable stopping time $\tau$,
  \[
  \zeta(\{\tau\}\times E\times\Rp)=0\quad\text{almost surely}.
  \]
  Let $H:\Omega\times(0,T]\times E\times\Rp\to\R$ be predictable and suppose that $\int|H|\,d\xi$ and $\int|H|\,d\zeta$ are almost surely finite. Then there exists a constant $C_p>0$, depending only on $p$, such that
  \[
  \E\left[\left\lvert \int H\,d\xi - \int H\,d\zeta\right\rvert^p\right] \leq C_p\left( \E\left[\left(\int H^2\,d\zeta\right)^{p/2}\right] + \E\left[\int \lvert H\rvert^p\,d\zeta\right] \right),
  \]
  holds with values in $[0,\infty]$.
\end{lemma}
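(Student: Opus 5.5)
The plan is to prove the inequality first for discrete-time martingales and then transfer it to the continuous-time setting. Set
\[
X_s:=\int_{(0,s]}H\,d\xi-\int_{(0,s]}H\,d\zeta .
\]
This is a purely discontinuous local martingale. Its jumps are $\Delta X_s=H(s,\beta_s)$ at the atoms $\beta_s$ of $\xi$. No compensator correction appears in the jumps, because $\zeta$ has no predictable time atoms. We may assume the right-hand side is finite, since otherwise there is nothing to prove. By localization, using stopping times that make $\int|H|\,d\zeta$ bounded and truncating $H$ to $\{|H|\le n\}$, we reduce to bounded $H$ with integrable total variation. The general case then follows from Fatou's lemma and monotone convergence.

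\emph{Step 1 (BDG).} The Burkholder--Davis--Gundy inequality for càdlàg local martingales gives
\[
\E|X_T|^p\le C_p\,\E\bigl[[X]_T^{p/2}\bigr],\qquad [X]_T=\int H^2\,d\xi .
\]
This quadratic variation is taken against $\xi$, not against $\zeta$, so it still has to be converted.

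\emph{Step 2 (compensating the bracket).} Write
\[
\int H^2\,d\xi=\int H^2\,d\zeta+\int H^2\,d(\xi-\zeta).
\]
For $p\le 4$, use the elementary bound $(a+b)^{p/2}\le C(a^{p/2}+b^{p/2})$ and apply BDG to the martingale $H^2\star(\xi-\zeta)$ with exponent $p/2\le 2$. For $p/2\le 2$ the relevant moment is controlled by $\E[(\int H^4\,d\xi)^{p/4}]\le \E\int|H|^p\,d\xi=\E\int|H|^p\,d\zeta$. Here we use $\|\cdot\|_{\ell^{4/p\cdot p/4}}$ subadditivity, i.e. $(\sum a_i^4)^{p/4}\le\sum a_i^p$ when $p\le4$. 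For $2\le p\le 4$ this closes the estimate directly.

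\emph{Step 3 (induction).} For general $p$, induct on $k$ with $p\in(2^k,2^{k+1}]$. Applying the claim at exponent $p/2$ to $H^2$ gives
\[
\E\Bigl|\int H^2 d(\xi-\zeta)\Bigr|^{p/2}\le C\Bigl(\E\bigl(\textstyle\int H^4 d\zeta\bigr)^{p/4}+\E\int|H|^p d\zeta\Bigr).
\]
The middle term is then interpolated by Hölder:
\[
\int H^4\,d\zeta\le\Bigl(\int H^2\,d\zeta\Bigr)^{\theta}\Bigl(\int |H|^p\,d\zeta\Bigr)^{1-\theta}
\]
with the appropriate $\theta$. Young's inequality then bounds $\E(\int H^4\,d\zeta)^{p/4}$ by the two terms on the right-hand side. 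This interpolation is the main technical point: the exponents must balance so that $(\cdot)^{p/4}$ of the product splits into $(\int H^2 d\zeta)^{p/2}$ and $\int|H|^p d\zeta$. With $\theta=(p-4)/(p-2)$ this works.

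Finally, undo the localization, which yields the stated bound in $[0,\infty]$. Alternatively, one can cite \cite{Novikov1975} or \cite[Theorem~3.2]{MarinelliRockner2014} directly after checking that the no-predictable-atoms hypothesis makes $X$ quasi-left-continuous, which is the setting of those results.
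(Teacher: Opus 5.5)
Your proof is correct, but it takes a different route from the paper. The paper does not prove the inequality itself. It shows that $A_t=\int_{(0,t]}|H|\,d\zeta$ is continuous, using the absence of predictable time atoms, and localizes with $\tau_n=\inf\{t:A_t\geq n\}\wedge T$ so that $A_{\tau_n}\leq n$. It then identifies $M$ as a purely discontinuous local martingale via Jacod--Shiryaev, Proposition~II.1.28, and checks that $M$ is quasi-left-continuous. Finally it cites the case $\alpha=2$ of (BJ) in Marinelli--R\"ockner, Theorem~3.2, for $M^{\tau_n}$, and concludes with Fatou and monotone convergence.

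Your main line instead reproves (BJ) from BDG:
\begin{itemize}
\item You use $[X]_T=\int H^2\,d\xi$ and split the bracket as $\int H^2\,d\zeta+H^2\star(\xi-\zeta)$.
\item For $2\leq p\leq 4$ you close the estimate by BDG at exponent $p/2$ together with subadditivity of $x\mapsto x^{p/4}$ on the atoms of $\xi$.
\item For larger $p$ you use dyadic induction. The interpolation with $\theta=(p-4)/(p-2)$ and Young's inequality with exponents $2/\theta$ and $4/((1-\theta)p)$ is consistent, since $\theta/2+(1-\theta)p/4=1$.
\end{itemize}
This is the classical Kunita/Novikov bootstrapping. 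It gives a self-contained argument with $C_p$ built only from BDG and Young constants, at the cost of length. The paper's route is short but rests on checking the hypotheses of the external theorem.

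Two points you gloss over should be made explicit. First, the stopping times that make $\int|H|\,d\zeta$ bounded rely on $A$ being continuous, or at least locally bounded as a predictable process. This is where the no-predictable-atoms hypothesis enters, in addition to the identification $\Delta X=H$ at the atoms of $\xi$. Second, the truncation $|H|\leq n$ is what ensures that $\int H^2\,d\xi$ and $\int H^2\,d\zeta$ are finite. Those are exactly the hypotheses needed to apply the induction step to $H^2$.
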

\begin{proof}
  If the right-hand side is infinite, the claim is immediate, so assume that it is finite. The finite-valued c\`adl\`ag predictable increasing process
  \[
  A_t:=\int_{(0,t]\times E\times\Rp}|H|\,d\zeta
  \]
  is locally bounded. Its jumps can be exhausted by predictable stopping times, and the absence of predictable time atoms gives
  \[
  \Delta A_\tau=\int_{E\times\Rp}|H(\tau,y,z)|\,\zeta(\{\tau\},dy,dz)=0
  \]
  at every predictable stopping time $\tau$. Hence $A$ is continuous. Define
  \[
  \tau_n:=\inf\{t\in[0,T]:A_t\geq n\}\wedge T,
  \]
  with $\inf\varnothing:=T$. The stopping times $\tau_n$ are predictable, increase to $T$, and continuity gives $A_{\tau_n}\leq n$. The compensation identity therefore yields
  \[
  \E\left[\int_{(0,\tau_n]\times E\times\Rp}|H|\,d\xi\right]
  =\E\left[\int_{(0,\tau_n]\times E\times\Rp}|H|\,d\zeta\right]
  \leq n.
  \]
  Proposition~II.1.28 of \cite{JacodShiryaev2003} now identifies
  \[
  M_t:=\int_{(0,t]\times E\times\Rp}H\,d\xi
  -\int_{(0,t]\times E\times\Rp}H\,d\zeta,
  \]
  as a purely discontinuous local martingale, and each $M^{\tau_n}$ is an integrable martingale. If $\tau$ is predictable, the compensation identity applied after stopping gives
  \[
  \E\left[\mathbf{1}_{\{\tau\leq\tau_n\}}
  \int_{E\times\Rp}|H(\tau,y,z)|\,\xi(\{\tau\},dy,dz)\right]=0,
  \]
  because $\zeta(\{\tau\}\times E\times\Rp)=0$. Thus $M$ has no predictable jumps and is quasi-left-continuous. Apply the case $\alpha=2$ of \textup{(BJ)} in \cite[Theorem~3.2]{MarinelliRockner2014} to $M^{\tau_n}$. Fatou's lemma on the left and monotone convergence of the stopped compensator integrals on the right yield the stated estimate at $T$.
\end{proof}

\begin{proposition}\label{prop:p_moments}
  Let $p\geq 2$ and assume that $\overline{z}_p<\infty$ and that $k_2$ and $k_p$ are Borel measurable with $k_2,k_p\in L^1([0,T])$. Then
  \[
  \sup_{(t,x)\in[0,T]\times E}\E\left[V_t(x)^p\right]<\infty.
  \]
\end{proposition}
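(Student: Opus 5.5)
We plan to start from the drift--martingale decomposition of Corollary~\ref{cor:drift_martingale}. For fixed $(t,x)$ write
\[
V_t(x)=\phi_t(x)+D_t(x)+M_t(x),\qquad D_t(x):=\overline z\int_0^t\int_EK(t-s,x,y)\bigl(\lambda_0+\lambda_1V_s(y)\bigr)m(dy)\,ds,
\]
with $M_t(x)=H_{t,x}\star(\mu-\nu)_t$. Since $V$ is non-negative, $(a+b+c)^p\le 3^{p-1}(a^p+b^p+c^p)$ reduces the task to bounding the $p$-th moments of the three pieces. The constant $\phi_t(x)^p\le\lVert\phi\rVert_\infty^p$ is harmless. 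The obstacle is that we do not know a priori that $m_p(t):=\sup_x\E[V_t(x)^p]$ is finite. We therefore plan to work first with a truncated or localized quantity and close a Gronwall--Volterra inequality, and only then remove the truncation.

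For the drift term, Jensen's inequality with respect to the finite measure $K(t-s,x,y)\,m(dy)\,ds$, whose total mass is at most $\int_0^Tk$, gives
\[
\E[D_t(x)^p]\le C\Bigl(1+\int_0^t\int_EK(t-s,x,y)\,\E[V_s(y)^p]\,m(dy)\,ds\Bigr)\le C\Bigl(1+\int_0^tk(t-s)\,m_p(s)\,ds\Bigr).
\]
For the martingale term, we apply Lemma~\ref{lem:kunita} with $\xi=\mu$, $\zeta=\nu$, and the deterministic integrand $H_{t,x}$. The compensator $\nu$ has a $ds$-density, so it has no predictable atoms, and the finiteness of the $\mu$- and $\nu$-integrals was shown in Corollary~\ref{cor:drift_martingale}. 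The second term of the lemma is
\[
\overline z_p\int_0^t\int_EK(t-s,x,y)^p\bigl(\lambda_0+\lambda_1\E V_s(y)\bigr)\,m\,ds\le \overline z_p(\lambda_0+\lambda_1M)\int_0^Tk_p,
\]
which involves only first moments and is therefore finite directly. The first term is $\E\bigl[\bigl(\overline z_2\int\!\int K^2(\lambda_0+\lambda_1V_s(y))\,m\,ds\bigr)^{p/2}\bigr]$. Here we use Jensen's inequality with respect to the finite measure $K(t-s,x,y)^2\,m(dy)\,ds$, of mass at most $\int_0^Tk_2$, and since $p/2\ge1$ this gives
\[
\le C\Bigl(1+\int_0^t\int_EK(t-s,x,y)^2\,\E[V_s(y)^{p/2}]\,m\,ds\Bigr)\le C\Bigl(1+\int_0^tk_2(t-s)\,m_p(s)\,ds\Bigr),
\]
where the last step uses $x^{p/2}\le 1+x^p$. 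Altogether we expect
\[
m_p(t)\le C_0+C_1\int_0^t\bigl(k(t-s)+k_2(t-s)\bigr)\,m_p(s)\,ds,
\]
with the kernel $k+k_2\in L^1$.

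To make this rigorous without knowing finiteness, we plan to run the same estimates on the Picard iterates $X^{(n+1)}=\Psi(X^{(n)})$, starting from $X^{(0)}=\phi$. By induction, each iterate has finite $p$-th moments uniformly in $(t,x)$. Applying Lemma~\ref{lem:kunita} to the $N$-integral with the thinning indicator, whose compensator is again a density, gives the inequality above with $m_p^{(n+1)}$ on the left and $m_p^{(n)}$ on the right. The weighted sup norm $\sup_t e^{-\eta t}m_p^{(n)}(t)$ then satisfies $a_{n+1}\le C_0+C_1\int_0^Te^{-\eta r}(k+k_2)(r)\,dr\cdot a_n$. Choosing $\eta$ large makes the coefficient smaller than $1$, so $\sup_na_n<\infty$. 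Finally, $X^{(n)}\to V$ in $\mathcal H_\eta$ by the contraction of Theorem~\ref{thm:Psi_contraction}, so a subsequence converges almost surely at $dt\otimes m$-a.e.\ $(t,x)$, and Fatou's lemma transfers the bound to such points. At every remaining $(t,x)$ we obtain the bound by feeding this a.e.\ bound back into the one-step estimate for $V=\Psi(V)$, since the right-hand side only integrates $m_p$ in $ds\,m(dy)$.

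The main obstacle is the Bichteler--Jacod step. It requires checking that the iterates' integrals meet the hypotheses of Lemma~\ref{lem:kunita}. It also requires the Jensen bound on the quadratic term to produce only $m_p$ integrated against an $L^1$ kernel built from $k_2$, rather than a power of the integral that would prevent the linear Gronwall closure.
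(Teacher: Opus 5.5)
Your proposal is correct and follows essentially the same route as the paper. Both run Picard iterates from $\phi$ and decompose each iterate into driver, drift and compensated integral, applying Lemma~\ref{lem:kunita}. Both use H\"older/Jensen against $K\,m\,ds$ and $K^2\,m\,ds$ together with $a^{p/2}\le 1+a^p$ to get a linear Volterra inequality with kernel $k+k_2$, and an exponential weight to obtain a bound uniform in the iteration index. They finish with Fatou at $dt\otimes m$-a.e.\ points and one more one-step estimate to reach every $(t,x)$.

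You leave one ingredient implicit: the uniform-in-$n$ first-moment bound for the iterates, which the paper obtains from \eqref{eq:Psi_bound} by induction. It is needed both to verify the hypotheses of Lemma~\ref{lem:kunita} and to control the $\overline z_p$ term. Also, the paper closes the recursion with a deterministic measurable majorant $u_m(s)$ rather than $\sup_x\E[\,\cdot\,]$, which sidesteps any measurability question in $s$.
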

\begin{proof}
  Consider the Picard iteration $V^{(0)}:=\phi$ and $V^{(m+1)}:=\Psi(V^{(m)})$, defined pointwise by \eqref{eq:psi_operator}, and fix $\eta>0$ with $\overline{z}\lambda_1J_\eta(T)<1$. By \eqref{eq:Psi_bound} and induction, $B_1:=\sup_{m}\lVert V^{(m)}\rVert_\eta<\infty$, and as in the proof of Lemma~\ref{lem:L1_estimate} we obtain the uniform pointwise first-moment bound
  \[
  \E\left[V^{(m)}_t(x)\right] \leq \lVert\phi\rVert_\infty+\overline{z}\left(\lambda_0+\lambda_1e^{\eta T}B_1\right)\int_0^Tk(r)\,dr =: M_1, \qquad m\in\N,\ (t,x)\in[0,T]\times E.
  \]
  Set $a^{(m)}_s(y):=\lambda_0+\lambda_1V^{(m)}_s(y)$. The proofs of Proposition~\ref{prop:affine_compensator} and Corollary~\ref{cor:drift_martingale} only use the predictability and the uniform first-moment bound of the field appearing in the thinning indicator. Applying these to $V^{(m)}$, it follows that the accepted-jump measure $\mu_m$ associated with the indicator $\mathbf{1}_{\lbrace u\leq a^{(m)}_s(y)\rbrace}$ has compensator $\nu_m(ds,dy,dz) = a^{(m)}_s(y)\,ds\,m(dy)\,\ell(dz)$ and that, almost surely,
  \begin{equation}\label{eq:V_t_m+1_bound}
  \begin{aligned}
  V^{(m+1)}_t(x)={}&\phi_t(x)
  +\overline{z}\int_0^t\int_EK(t-s,x,y)\,a^{(m)}_s(y)\,m(dy)\,ds\\
  &+H_{t,x}\star(\mu_m-\nu_m)_t,
  \end{aligned}
  \end{equation}
  where $H_{t,x}(s,y,z):=\mathbf 1_{\{s<t\}}K(t-s,x,y)z$. Write $A_q:=\int_0^Tk_q(r)\,dr$ for $q\in\lbrace1,2,p\rbrace$. The compensator $\nu_m=a_s^{(m)}(y)\,ds\,m(dy)\,\ell(dz)$ has no predictable time atoms. Moreover, the uniform first-moment estimate and admissibility imply that the $\mu_m$- and $\nu_m$-integrals of $K(t-s,x,y)z$ are almost surely finite. Lemma~\ref{lem:kunita} therefore applies.

  We bound the three contributions in \eqref{eq:V_t_m+1_bound} separately. If $A_2=0$, the quadratic compensator integral below vanishes and we set $B_{2,p}=0$ in this case. If $A_2>0$, set $B_{2,p}:=A_2^{p/2-1}$ and apply H\"older's inequality to the finite measure $K(t-s,x,y)^2m(dy)ds$. Combining this with the analogous bound for $K(t-s,x,y)m(dy)ds$ and the scalar inequality $(b_1+b_2+b_3)^p\leq3^{p-1}(b_1^p+b_2^p+b_3^p)$ gives
  \[
  \begin{aligned}
  \E\left[\bigl(V^{(m+1)}_t(x)\bigr)^p\right] &\leq 3^{p-1}\biggl( \lVert\phi\rVert_\infty^p + \overline{z}^pA_1^{p-1}\int_0^t\int_EK(t-s,x,y)\,\E\left[\bigl(a^{(m)}_s(y)\bigr)^p\right]m(dy)\,ds \\
  &\qquad + C_p\,\overline{z}_2^{p/2}B_{2,p}\int_0^t\int_EK(t-s,x,y)^2\,\E\left[\bigl(a^{(m)}_s(y)\bigr)^{p/2}\right]m(dy)\,ds \\
  &\qquad + C_p\,\overline{z}_p\,A_p\left(\lambda_0+\lambda_1M_1\right) \biggr).
  \end{aligned}
  \]
  The assumptions $\overline z<\infty$ and $\overline z_p<\infty$ imply that $\overline z_q<\infty$ for every $q\in[1,p]$, in particular for the second and intermediate moments used above. Using $a^{p/2}\leq1+a^p$ and $\E[(a^{(m)}_s(y))^p]\leq2^{p-1}(\lambda_0^p+\lambda_1^p\E[(V^{(m)}_s(y))^p])$, we obtain constants $C_1,C_2>0$, depending only on $p$, $C_p$, the mark moments, $\lambda_0,\lambda_1$, $\lVert\phi\rVert_\infty$, $A_1,A_2,A_p$, and $M_1$, such that
  \begin{equation}\label{eq:p_moment_recursion}
  \E\left[\bigl(V^{(m+1)}_t(x)\bigr)^p\right] \leq C_1 + C_2\int_0^t\bigl(k(t-s)+k_2(t-s)\bigr)\,u_m(s)\,ds, \qquad (t,x)\in[0,T]\times E,
  \end{equation}
  whenever $u_m$ is a measurable function with $\E[(V^{(m)}_s(y))^p]\leq u_m(s)$ for $ds\otimes m$-a.e. $(s,y)$. Accordingly, define 
  \[
  u_0:=\lVert\phi\rVert_\infty^p, \quad u_{m+1}(t):=C_1+C_2\int_0^t(k+k_2)(t-s)\,u_m(s)\,ds,
  \]
  and choose $\eta'\geq0$ so large that $\theta:=C_2\int_0^Te^{-\eta' r}(k+k_2)(r)\,dr<1$, which is possible by dominated convergence since $k+k_2\in L^1([0,T])$. Setting 
  \[
  A:=(C_1\vee\lVert\phi\rVert_\infty^p)/(1-\theta),
  \] 
  it follows by induction that $u_m(t)\leq Ae^{\eta' t}$ for all $m$ and $t$. Indeed, $u_0\leq A$, and
  \[
  u_{m+1}(t)\leq C_1+C_2A\int_0^te^{\eta' s}(k+k_2)(t-s)\,ds \leq C_1+A\theta e^{\eta' t}\leq Ae^{\eta' t}.
  \]
  Hence $B_p:=\sup_m\sup_{(t,x)}\E[(V^{(m)}_t(x))^p]\leq Ae^{\eta' T}<\infty$. The Banach fixed point theorem yields that $V^{(m)}\to Y$ in $\mathcal{H}_\eta$ and thus it holds that $V^{(m)}_t(x)\to Y_t(x)$ in $L^1(\Prob)$ $dt\otimes m$-a.e. and in particular almost surely along a subsequence. Fatou's lemma yields $\E[Y_t(x)^p]\leq B_p$ $dt\otimes m$-a.e. Finally, since $V=\Psi(Y)$, one further application of \eqref{eq:p_moment_recursion}, with $Y$ in place of $V^{(m)}$ and the constant majorant $u\equiv B_p$, gives 
  \[
  \E[V_t(x)^p]\leq C_1+C_2B_p\int_0^T(k+k_2)(r)\,dr<\infty,
  \]
  for every $(t,x)\in[0,T]\times E$.
\end{proof}

\begin{proposition}\label{prop:increment_moments}
  Let the assumptions of Proposition~\ref{prop:p_moments} and \eqref{eq:phi_modulus} hold, and assume in addition that there exist $\beta\in(0,1]$ and non-negative functions $h_q\in L^q([0,T])$, $q\in\lbrace1,2,p\rbrace$, such that for all $x,x'\in E$,
  \begin{equation}\label{eq:K_Lq_modulus}
  \left(\int_E\lvert K(r,x,y)-K(r,x',y)\rvert^q\,m(dy)\right)^{1/q} \leq h_q(r)\,d_E(x,x')^\beta, \qquad r\in(0,T],\ q\in\lbrace1,2,p\rbrace.
  \end{equation}
  Then there exists a constant $C>0$ such that
  \[
  \sup_{t\in[0,T]}\E\left[\lvert V_t(x)-V_t(x')\rvert^p\right] \leq C\,d_E(x,x')^{\beta p}, \qquad x,x'\in E.
  \]
\end{proposition}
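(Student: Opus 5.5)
The plan is to mimic the proof of Proposition~\ref{prop:p_moments}, applied to the difference of the two instances of \eqref{eq:strong_solution} at $x$ and $x'$. I fix $t\in[0,T]$ and $x,x'\in E$, and write the increment kernel $D(r,y):=K(r,x,y)-K(r,x',y)$. By Corollary~\ref{cor:drift_martingale}, applied with the deterministic integrands $H_{t,x}$ and $H_{t,x'}$ (both $\mu$- and $\nu$-integrals being almost surely finite), almost surely
\[
V_t(x)-V_t(x')=\bigl(\phi_t(x)-\phi_t(x')\bigr)+\overline z\int_0^t\int_E D(t-s,y)\bigl(\lambda_0+\lambda_1V_s(y)\bigr)m(dy)\,ds+G\star(\mu-\nu)_t,
\]
with $G(s,y,z):=\mathbf 1_{\{s<t\}}D(t-s,y)z$. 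The inequality $(b_1+b_2+b_3)^p\le 3^{p-1}(b_1^p+b_2^p+b_3^p)$ then splits the task into three estimates. The first term is at most $C_\phi^p d_E(x,x')^{\beta p}$ by \eqref{eq:phi_modulus}.

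For the drift term I set $a_s(y):=\lambda_0+\lambda_1V_s(y)$ and apply H\"older's inequality (Jensen) with respect to the finite measure $|D(t-s,y)|\,m(dy)\,ds$. Its total mass is at most $d_E(x,x')^\beta\int_0^Th_1$ by \eqref{eq:K_L1_modulus}, that is, the $q=1$ case of \eqref{eq:K_Lq_modulus}. This gives
\[
\E\Bigl[\Bigl(\int\!\!\int|D|\,a\Bigr)^p\Bigr]\le\Bigl(d_E(x,x')^\beta\!\int_0^T\! h_1\Bigr)^{p-1}\int_0^t\!\!\int_E|D(t-s,y)|\,\E[a_s(y)^p]\,m(dy)\,ds\le C\,d_E(x,x')^{\beta p},
\]
using $\sup\E[a^p]<\infty$ from Proposition~\ref{prop:p_moments}.

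For the martingale term I apply Lemma~\ref{lem:kunita} with $\xi=\mu$ and $\zeta=\nu$; the compensator $\nu$ has no time atoms by Proposition~\ref{prop:affine_compensator}. The lemma yields two contributions. The $|G|^p$ contribution is
\[
\overline z_p\int_0^t\!\int_E|D(t-s,y)|^p\,\E[a_s(y)]\,m(dy)\,ds\le\overline z_p(\lambda_0+\lambda_1M)\,\|h_p\|_{L^p}^p\,d_E(x,x')^{\beta p},
\]
by \eqref{eq:K_Lq_modulus} with $q=p$. The quadratic contribution is $\E\bigl[\bigl(\overline z_2\int\!\!\int D^2a\,dm\,ds\bigr)^{p/2}\bigr]$. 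Here I use H\"older with respect to the measure $D^2\,m(dy)\,ds$, whose mass is at most $\|h_2\|_{L^2}^2d_E(x,x')^{2\beta}$. This bounds the quadratic contribution by $\overline z_2^{p/2}\bigl(\|h_2\|_{L^2}^2d^{2\beta}\bigr)^{p/2-1}\int\!\!\int D^2\,\E[a^{p/2}]\le C\,d_E(x,x')^{\beta p}$, again using the $p$-th moment bound and $a^{p/2}\le1+a^p$. The degenerate case of zero mass is trivial.

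Collecting the three bounds gives the claim with a constant independent of $t,x,x'$. I expect the main care to be needed in justifying the use of Lemma~\ref{lem:kunita} for a signed integrand, namely the almost sure finiteness of $\int|G|\,d\mu$ and $\int|G|\,d\nu$. This follows from $|G|\le(K(t-s,x,y)+K(t-s,x',y))z$ and the finiteness already established in Corollary~\ref{cor:drift_martingale}. The exponent bookkeeping in the H\"older steps is needed to recover exactly the power $\beta p$; the key point is that every mass factor carries its own power of $d_E(x,x')^\beta$.
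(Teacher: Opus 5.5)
Your proposal is correct and follows the paper's proof essentially step for step. Both arguments split the increment into three terms using the compensated representation \eqref{eq:drift_martingale_form}, apply H\"older/Jensen against the measures $|D(t-s,y)|\,m(dy)\,ds$ and $D(t-s,y)^2\,m(dy)\,ds$ (of masses $O(d_E(x,x')^{\beta})$ and $O(d_E(x,x')^{2\beta})$), and handle the compensated part with Lemma~\ref{lem:kunita}, using $a^{p/2}\le 1+a^p$ and the domination $|G|\le (K(t-s,x,y)+K(t-s,x',y))z$ for the finiteness hypotheses.
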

\begin{proof}
  Fix $t,x,x'$ and write 
  \[
  \begin{aligned}
  \Delta K(r,y)&:=K(r,x,y)-K(r,x',y), \\
  d&:=d_E(x,x'), \\
  a_s(y)&:=\lambda_0+\lambda_1V_s(y).
  \end{aligned}
  \]
  Set $S_q:=\sup_{(s,y)\in[0,T]\times E}\E[a_s(y)^q]$ for $q\in\lbrace1,p\rbrace$, and $S_q$ is finite due to \eqref{eq:V_sup_moment_bound} and Proposition~\ref{prop:p_moments}. Define $H_{t,x,x'}(s,y,z):=\mathbf 1_{\{s<t\}}\Delta K(t-s,y)z$. Subtracting the two instances of the compensated-noise representation \eqref{eq:drift_martingale_form} gives, almost surely,
  \[
  \begin{aligned}
  V_t(x)-V_t(x') &= \phi_t(x)-\phi_t(x') + \overline{z}\int_0^t\int_E\Delta K(t-s,y)\,a_s(y)\,m(dy)\,ds \\
  &\quad + H_{t,x,x'}\star(\mu-\nu)_t,
  \end{aligned}
  \]
  where all terms are almost surely finite. By \eqref{eq:K_Lq_modulus} with $q=1$, the measure $\lvert\Delta K(t-s,y)\rvert\,m(dy)\,ds$ on $(0,t)\times E$ has total mass at most $H_1d^\beta$ with $H_1:=\lVert h_1\rVert_{L^1([0,T])}$, so H\"older's inequality yields
  \[
  \begin{aligned}
  \E\left[\left(\int_0^t\int_E\lvert\Delta K(t-s,y)\rvert\,a_s(y)\,m(dy)\,ds\right)^p\right] &\leq \left(H_1d^\beta\right)^{p-1}S_p\int_0^t\int_E\lvert\Delta K(t-s,y)\rvert\,m(dy)\,ds \\
  &\leq H_1^p\,S_p\,d^{\beta p}.
  \end{aligned}
  \]
  The compensator $\nu=a_s(y)\,ds\,m(dy)\,\ell(dz)$ has no predictable time atoms. In addition, $|\Delta K|\leq K(\cdot,x,\cdot)+K(\cdot,x',\cdot)$ and the first-moment estimate show that the $\mu$- and $\nu$-integrals of $|\Delta K(t-s,y)|z$ are almost surely finite. Lemma~\ref{lem:kunita} therefore applies to $\xi=\mu$ and $H=\Delta K(t-s,y)z$ and gives the two terms
  \[
  \begin{aligned}
  \E\left[\left(\overline{z}_2\int_0^t\int_E\Delta K(t-s,y)^2\,a_s(y)\,m(dy)\,ds\right)^{p/2}\right] &\leq \overline{z}_2^{p/2}\left(H_2^2d^{2\beta}\right)^{p/2-1}\left(1+S_p\right)H_2^2\,d^{2\beta} \\
  &= \overline{z}_2^{p/2}H_2^p\left(1+S_p\right)d^{\beta p}
  \end{aligned}
  \]
  with $H_2:=\lVert h_2\rVert_{L^2([0,T])}$, where we used H\"older's inequality with respect to $\Delta K(t-s,y)^2\,m(dy)\,ds$ (of total mass at most $H_2^2d^{2\beta}$) and $a^{p/2}\leq1+a^p$, and
  \[
  \begin{aligned}
  \E\left[\int_{(0,t)\times E\times\Rp}\lvert\Delta K(t-s,y)\rvert^pz^p\,d\nu\right] &= \overline{z}_p\int_0^t\int_E\lvert\Delta K(t-s,y)\rvert^p\,\E\left[a_s(y)\right]m(dy)\,ds \\
  &\leq \overline{z}_p\,S_1\,\lVert h_p\rVert_{L^p([0,T])}^p\,d^{\beta p}.
  \end{aligned}
  \]
  Combining the three estimates with \eqref{eq:phi_modulus} and $(b_1+b_2+b_3)^p\leq3^{p-1}(b_1^p+b_2^p+b_3^p)$ proves the claim, with a constant $C$ depending only on $p$, $C_p$, $C_\phi$, $\overline{z},\overline{z}_2,\overline{z}_p$, $S_1,S_p$, $H_1,H_2$ and $\lVert h_p\rVert_{L^p}$.
\end{proof}

\printbibliography

\end{document}